\newdimen\AAdi%
\newbox\AAbo%
\def\AAk#1#2{\s_etbox\AAbo=\hbox{#2}\AAdi=\wd\AAbo\kern#1\AAdi{}}%
\def\AAr#1#2#3{\s_etbox\AAbo=\hbox{#2}\AAdi=\ht\AAbo\raise#1\AAdi\hbox{#3}}%
\font\tenmsb=msbm10 at 12pt \font\sevenmsb=msbm7 at 8pt
\font\fivemsb=msbm5 at 6pt
\def\Bbb#1{{\tenmsb\fam\msbfam#1}}
\newtheorem{thm}{Theorem}[section]
\newtheorem{lem}[thm]{Lemma}
\newtheorem{cor}[thm]{Corollary}
\newtheorem{pro}[thm]{Proposition}
\newtheorem{defi}[thm]{Definition}
\newcommand{\Section}[2]{\setcounter{equation}{0}
\allowdisplaybreaks
\section[#1]{#2}}
\def\pr {\noindent {\it Proof.} }
\def\n{\nabla}
\def\f#1#2{\frac{#1}{#2}}
\def\mc#1{\mathcal{#1}}
\def\pr{\frac {\partial}{\partial r}}
\def\pf#1{\frac{\partial}{\partial #1}}
\def\pd#1#2{\frac {\partial #1}{\partial #2}}
\def\td{\tilde}
\def\a{\alpha}
\def\p#1{\partial #1}
\def\de{\delta}
\def\De{\Delta}
\def\ep{\varepsilon}
\def\g{\gamma}
\def\k{\kappa}
\def\la{\lambda}
\def\La{\Lambda}
\def\om{\omega}
\def\Om{\Omega}
\def\th{\theta}
\def\Th{\Theta}
\def\w{\wedge}
\def\Hess{\mbox{Hess}}
\def\R{\Bbb{R}}
\def\lan{\langle}
\def\ran{\rangle}
\def\ra{\rightarrow}
\def\mb{\mathbf}
\def\Hess{\text{Hess }}
\begin{document}
\title
{Dirichlet boundary values on Euclidean balls with infinitely many solutions for the minimal surface system}

  \author[x1,x2]{Xiaowei Xu}
 \ead{xwxu09@ustc.edu.cn}
 \author[y1,y2]{Ling Yang}
 \ead{yanglingfd@fudan.edu.cn}
 \author[z1,z2]{Yongsheng Zhang}
 \ead{yongsheng.chang@gmail.com}

\address[x1]{School of Mathematical Sciences,  University of Science and Technology of China, Hefei, 230026, Anhui province, China}
\address[x2]{Wu Wen-Tsun Key Laboratory of Mathematics, USTC, Chinese Academy of Sciences, Hefei, 230026, Anhui province, China}
\address[y1]{School of Mathematical Sciences, Fudan University, Shanghai, 200433, China}
\address[y2]{Shanghai Center for Mathematical Sciences, Fudan University, Shanghai, 200438, China}
\address[z1]{School of Mathematical Sciences, Tongji University, Shanghai, 200092, China}
\address[z2]{Institute for Advanced Study, Tongji University, Shanghai, 200092, China}






\begin{abstract}
We make systematic developments on Lawson-Osserman constructions relating to the Dirichlet problem (over unit disks) for minimal surfaces of high codimension in their 1977' Acta paper. In particular, we show the existence of boundary functions for which infinitely many analytic solutions and at least one nonsmooth Lipschitz solution exist simultaneously. This newly-discovered amusing phenomenon enriches the understanding on the Lawson-Osserman philosophy.
\\{\ }\\
\textbf{R\'esum\'e}
\\{\ }\\
On donne des d\'eveloppements syst\'emetiques sur les constructions \`a la Lawson-Osserman relatives au probl\`eme de Dirichlet (sur les disques unitaires) pour des surfaces minimales de codimension \'elev\'ee dans leur papier Acta en 1977. En particulier, nous d\'emontrons l'existence des fonctions aux limites pour lesquelles une infinit\'e de solutions analytiques et au moins une solution de Lipschitz (non lisse) existent simultan\'ement. Cette nouvelle et amusante d\'ecouverte enrichit notre compr\'ehension de la philosophie \`a la Lawson-Osserman.
\\{\ }
\begin{keyword}
 Dirichlet problem for the minimal surface system \sep
  Lawson-Osserman constructions \sep
  Singular solutions \sep
   Dynamic system \sep
    Analytic solutions\\{\ }\\
    \MSC[2010] 53A10, 53A07, 53C42, 58E20
\end{keyword}
\end{abstract}


\maketitle

\tableofcontents

\renewcommand{\proofname}{\it Proof.}


\Section{Introduction}{Introduction}

The research on minimal graphs in Euclidean spaces has a long and fertile history.
Among others, the \textbf{Dirichlet problem} (cf. \cite{j-s,b-d-m, de, m1,l-o}) is a central topic in this subject:

\textit{Let $\Omega\subset \R^{d_1}$ be a bounded and strictly convex domain with boundary of class $C^r$ for $r\geq 2$.
        It asks, for a given function $f:\partial \Omega \rightarrow \mathbb R^{d_2}$ of class $C^s$ with $0\leq s\leq r$,
         what kind of and how many functions $\in C^0(\overline{\Omega};\R^{d_2})\bigcap \text{Lip}(\Omega;\R^{d_2})$
          exist
        so that each such function $F:x=(x^1,\cdots,x^{d_1})\mapsto F(x)=(F^1,\cdots,F^{d_2})$ is a weak solution to the minimal surface system
         \begin{equation}
         \left\{\begin{array}{cc}
         \sum\limits_{i=1}^{d_1}\pf{x^i}(\sqrt{g}g^{ij})=0, & j=1,\cdots,d_1,\\
         \sum\limits_{i,j=1}^{d_1}\pf{x^i}(\sqrt{g}g^{ij}\pd{F^\a}{x^j})=0, & \alpha=1,\cdots,d_2,
         \end{array}
         \right.
         \end{equation}
 where $g_{ij}=\de_{ij}+\sum\limits_{\a=1}^{d_2}\pd{F^\a}{x^i}\pd{F^\a}{x^j}$, $(g^{ij})=(g_{ij})^{-1}$ and $g=\det(g_{ij})$,
 satisfying the Dirichlet condition
         \[
          F|_{\partial \Omega}=f.
          \]
        That means
        the graph of $F$ is minimal in the sense of \cite{al} with that of $f$ being its boundary.
        }

When $d_2=1$, we have a fairly profound understanding.

\begin{itemize}
\item Given arbitrary boundary data of class $C^0$,
                    by the works of J. Douglas \cite{d},
                    T. Rad\'o \cite{r,r2},
                    Jenkins-Serrin \cite{j-s}
                    and Bombieri-de Giorgi-Miranda \cite{b-d-m},
                    there exists a unique Lipschitz solution to the Dirichlet problem.
\item Furthermore, due to the works of E. de Giorgi \cite{de} and J. Moser \cite{m1}, this solution turns out to be analytic.
\item Each solution gives an absolutely area-minimizing graph
          by virtue of the convexity of $\Omega\times \mathbb R$ and \S5.4.18 of
          \cite{fe}.
          As a consequence, it is stable.
\end{itemize}

However, the situation for $d_2\geq 2$ becomes much more complicated.
          Even when $\Om=\mathbb D^{d_1}$ (the unit Euclidean disk),
          H. B. Lawson and R. Osserman \cite{l-o}
          discovered astonishing phenomena that reveal essential differences.

\begin{itemize}
          \item For $d_1=2$, $d_2\geq 2$, some real analytic boundary data can be constructed
                   so that
                   there exist at least three different analytic solutions to the Dirichlet problem.
                   Moreover,
                   one of them gives an unstable minimal surface. (See \cite{sa} for more discussions.)
          \item For $d_1\geq 4$ and $d_1-1\geq d_2\geq 3$, the Dirichlet problem is generally not solvable.
                   In fact, for each $f:S^{d_1-1}\ra S^{d_2-1}$ that is not homotopic to zero,
                   there exists a positive constant $c$ depending only on $f$, such that
                   the problem is unsolvable for the boundary data $f_\varphi:=\varphi\cdot f$, where
                   $\varphi$ is a constant no less than $c$.
          \item For certain boundary data, there exists a Lipschitz solution to the Dirichlet problem which is not $C^1$.
\end{itemize}

As mentioned in \cite{l-o} the nonexistence and irregularity of the Dirichlet problem are intimately related as follows.
Given $f$ that represents a non-trivial element of $\pi_{d_1-1}(S^{d_2-1})$, the Dirichlet problem for $f_\varphi$ is solvable when $\varphi$ is small (due to the implicit function theorem)
but unsolvable for large $\varphi$. This leads Lawson-Osserman to the
     \textbf{philosophy} that there should exist a critical value $\varphi_0$
which supports some sort of singular solution.
In particular, for Hopf map $H^{2m-1,m}:S^{2m-1}\rightarrow S^m$ with $m=2,4$ or $8$,
\begin{equation}
M_m:=\{ (\cos\th_m\cdot x,\sin\th_m\cdot H^{2m-1,m}(x)):x\in S^{2m-1}\}\subset S^{3m}
\end{equation}
with
\begin{equation}
\th_m:=\arccos\sqrt{\f{4(m-1)}{3(2m-1)}}
\end{equation}
gives a minimal sphere
and spans a minimal cone $C_m$ which is exactly the graph of 
\begin{equation}F_{m}(y)=\left\{\begin{array}{cc}
\tan\th_m\cdot |y|\cdot H^{2m-1,m}(\f{y}{|y|}) & y\neq 0,\\
0 & y=0.
\end{array}\right.
\end{equation}
The restriction of the graph over domain $\Bbb{D}^{2m}$ presents a Lipschitz solution to the Dirichlet problem for boundary data $\tan\th_m\cdot H^{2m-1,m}$.

To further explore the topic in a more general framework,
we introduce the following concepts.

\begin{defi}
For a smooth map $f:S^n\ra S^m$,
if there exists an acute angle $\th$, such that
\begin{equation}
M_{f,\th}:=\{(\cos\th\cdot x,\sin\th\cdot f(x)):x\in S^n\}
\end{equation}
is a minimal submanifold of $S^{n+m+1}$, then we call $f$ a {\bf Lawson-Osserman map (LOM)},
$M_{f,\th}$ the associated {\bf Lawson-Osserman sphere (LOS)}, and the cone $C_{f,\th}$ over $M_{f,\th}$ the corresponding {\bf Lawson-Osserman cone (LOC)}.
\end{defi}

Similarly, for an LOM $f$, the associated $C_{f,\th}$ is the graph of
\begin{equation}\label{cone-graph}
F_{f,\th}(y)=\left\{\begin{array}{cc}
\tan\th \cdot |y|\cdot f(\f{y}{|y|}) & y\neq 0,\\
0 & y=0.
\end{array}\right.
\end{equation}
Thus the restriction over $\Bbb{D}^{n+1}$ provides a Lipschitz solution to the Dirichlet problem
for boundary data $ f_{\varphi_0}$
where $\varphi_0=\tan\th$.

 Assume $f:S^n\ra S^m$ is an LOM, not totally geodesic.
 Then $f$ is called an {\bf LOMSE} if
 all nonzero singular values
of $(f_*)_x$ are equal for each $x\in S^n$. Here, a singular value of $(f_*)_x$ means a critical value of the
 function $v\in T_x S^n\mapsto \frac{|f_* v|}{|v|}$. As $x$ varies, these values form a continuous function $\la(x)$.
It can be shown that $\la(x)$ equals
 a constant $\la$ and that $f$ has constant rank $p$ (see Theorem \ref{g2} (ii)). Moreover, all components of this vector-valued function $f$, i.e. $f_1,\cdots,f_{m+1}$ are
  spherical harmonic functions of degree $k\geq 2$ (see Theorem \ref{npk}).
  Accordingly, such $f$ is called an {\bf LOMSE of (n,p,k)-type}.

In this paper we shall systematically study LOMSEs from several viewpoints.

A characterization of LOMSEs will be established in Theorem \ref{g2}, which asserts that
each of them can be written as the composition of a Riemannian submersion from $S^n$ with connected fibers and (up to a scaling) an isometric minimal immersion into $S^m$.
In fact,
by the wonderful results in \cite{br,g-g,w},
the submersion
that determines $(n,p)$
has to be a Hopf fibration over a complex projective space, a quaterninonic projective space
or the octonionic projective line;
 while the choices of the second component
 for each even integer $k$ usually form
a moduli space of large dimension (see \cite{c-w,oh,u,to,to2}), yielding a huge number of LOMSEs as well as the associated LOSs and LOCs.
Note that, except the original three
Lawson-Osserman cones,
we always have $m>n$
that means `$f$ is not homotopic to zero' is not a
                            requisite to span a non-parametric minimal cone.

Although we find uncountably many LOMSEs,
each of them has the nonzero singular value $\la$ and the acute angle $\th$
both in a discrete manner in terms of $(n,p,k)$ (see Theorem \ref{npk}).
Consequently, we observe interesting gap phenomena for certain geometric quantities of LOSs or LOCs associated to LOMSEs, e.g. angles between normal planes and a fixed reference plane, volumes, Jordan angles and slope functions (see Corollary \ref{cor2}).
Rigidity properties for
these quantities of compact minimal submanifolds in spheres or entire minimal graphs in Euclidean spaces have drawn attention in
many literatures \cite{b,fc, c-l-y, j-x,j-x-y3,j-x-y2,j-x-y4,j-x-y5}.

We seek for analytic solutions to Dirichlet problem
for the boundary data $f_\varphi$ 
as well.
 A good candidate (compared with \eqref{cone-graph}) turns out to be
 \begin{equation}
 F_{f,\rho}(y)=\left\{\begin{array}{cc}
\rho(|y|)f(\f{y}{|y|}) & y\neq 0\\
0 & y=0
\end{array}\right.
\end{equation}
Here $\rho$ is a smooth positive function on $(0,b)$ for some $b\in\R_+\cup \{+\infty\}$,
satisfying $\lim\limits_{r\ra 0^+}\rho=0$.
If
\begin{equation}\label{frho1}
M_{f,\rho}:=\big\{(rx,\rho(r)f(x)):x\in S^n, r\in (0,b)\big\}
\end{equation}
 is minimal and $\rho_r(0)=0$,
 then Morrey's regularity theorem \cite{mo}
 ensures $F_{f,\rho}$ an analytic solution to the minimal surface equations through the origin.
 Since the minimality
 is invariant under rescaling,
 $F_{f,\rho_d}$
 for
  $\rho_d(r):=\f{1}{d}\,\rho(d\cdot r)$
  and $d>0$
  produce a series of minimal graphs.
 So, in the $r\rho$-plane,
 every intersection point of the graph of $\rho$ and the ray $\rho=\varphi\cdot r$
 generates an analytic solution to the Dirichlet problem for $f_\varphi$.

 In particular, when $f$ is an LOMSE, the minimal surface equations can be reduced to (\ref{ODE1}),
 a nonlinear ordinary differential equation of second order,
 equivalent to an autonomous system (\ref{ODE2}) in the $\varphi\psi$-plane
 for $\varphi:=\f{\rho}{r}$, $t:=\log r$ and $\psi:=\varphi_t$.
 With the aid of suitable barrier functions, we obtain a long-time existing bounded solution, whose orbit in the phase space
 emits from the origin - a saddle critical point and limits to $P_1(\varphi_0,0)$ - a stable critical point (see Propositions \ref{case1}-\ref{case2}).

Quite subtly, there are two dramatically different types of asymptotic behaviors
aroud $P_1$ relying on the values $(n,p,k)$ of $f$:
\begin{enumerate}
\item [(I)]
$P_1$ is a stable center when $(n,p,k)=(3,2,2), (5,4,2), (5,4,4)$ or $n\geq 7$;
\item [(II)] $P_1$ is a stable spiral point when $(n,p)=(3,2)$,
$k\geq 4$ or $(n,p)=(5,4)$, $k\geq 6$.
\end{enumerate}
                   \begin{figure}[h]
                              \begin{minipage}[c]{0.5\textwidth}
                       \   \ \    \includegraphics[scale=0.55]{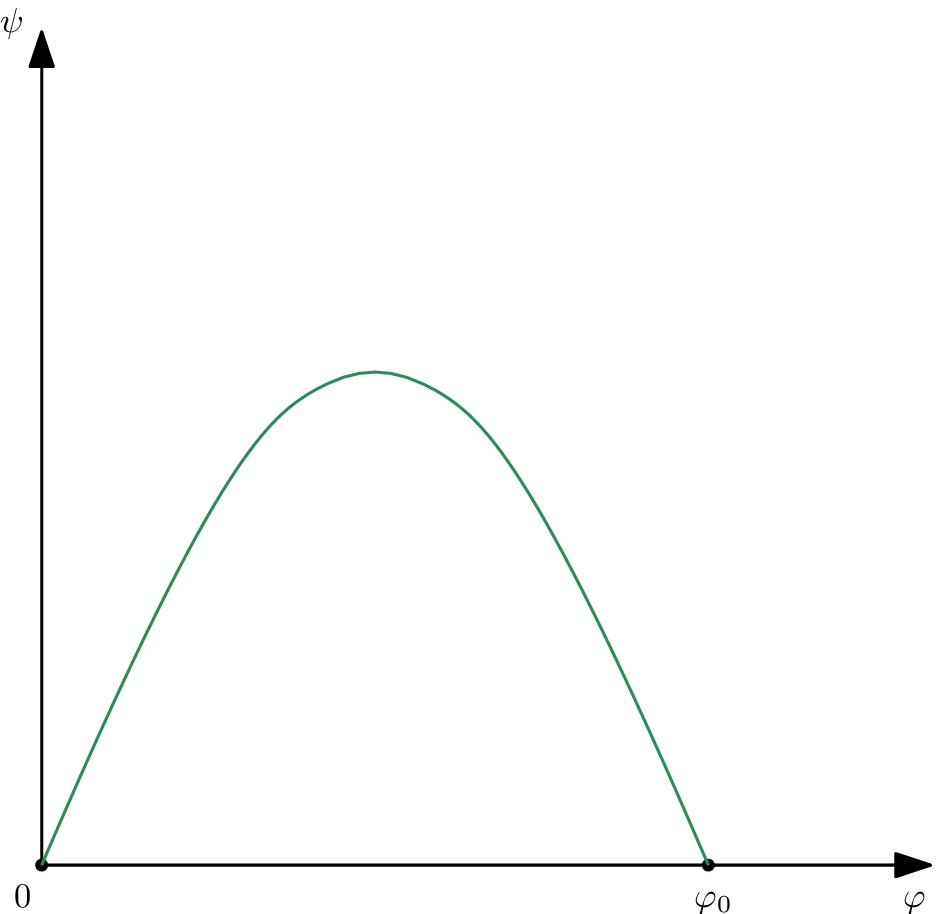}
                              \end{minipage}%
                          \begin{minipage}[c]{0.45\textwidth}
                           \includegraphics[scale=0.5]{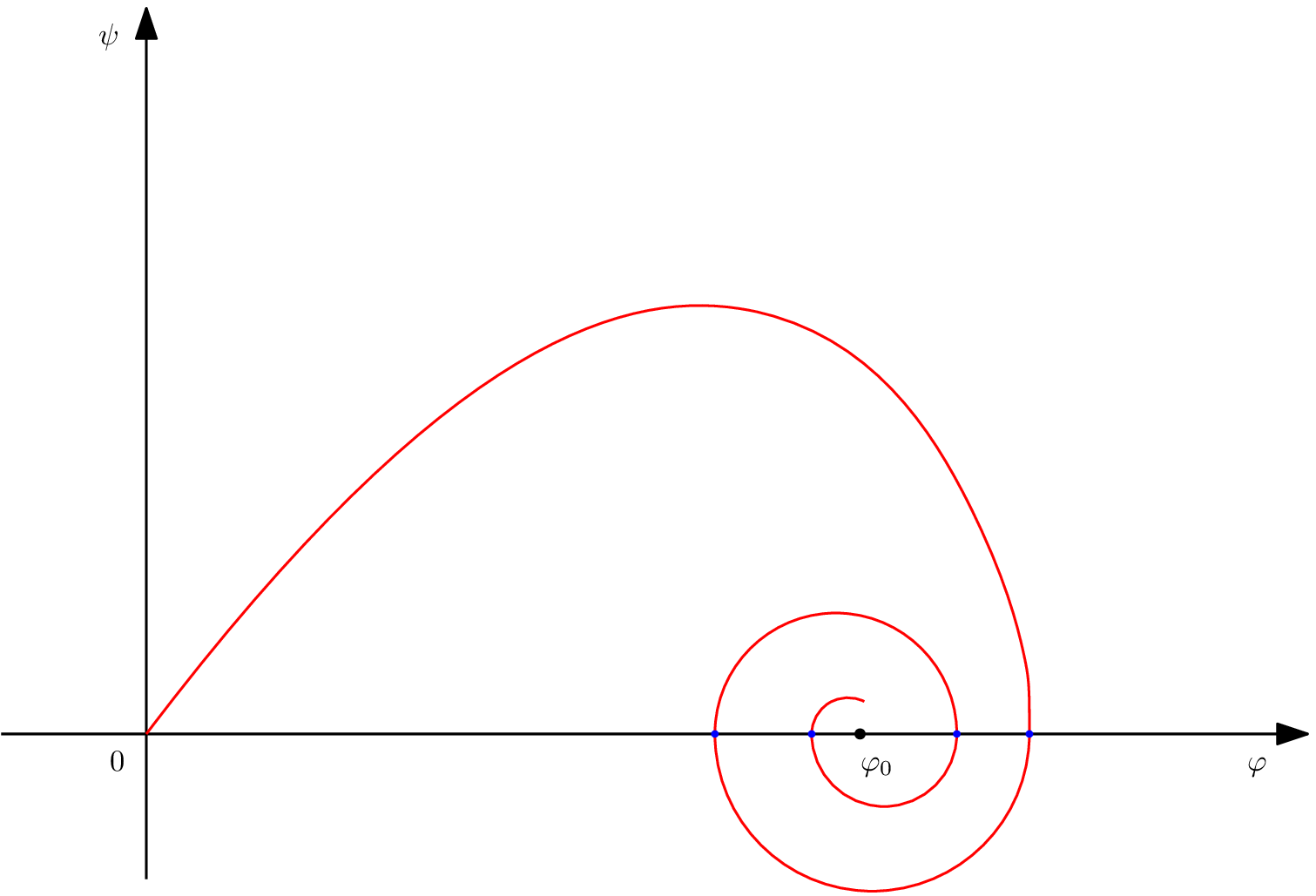}
                           \end{minipage}
                    \end{figure}
 Corresponding graphs of the solutions $\rho$ to (\ref{ODE1}) are illustrated below, respectively. 
                              $$\begin{minipage}[c]{0.55\textwidth}
                              \includegraphics[scale=0.45]{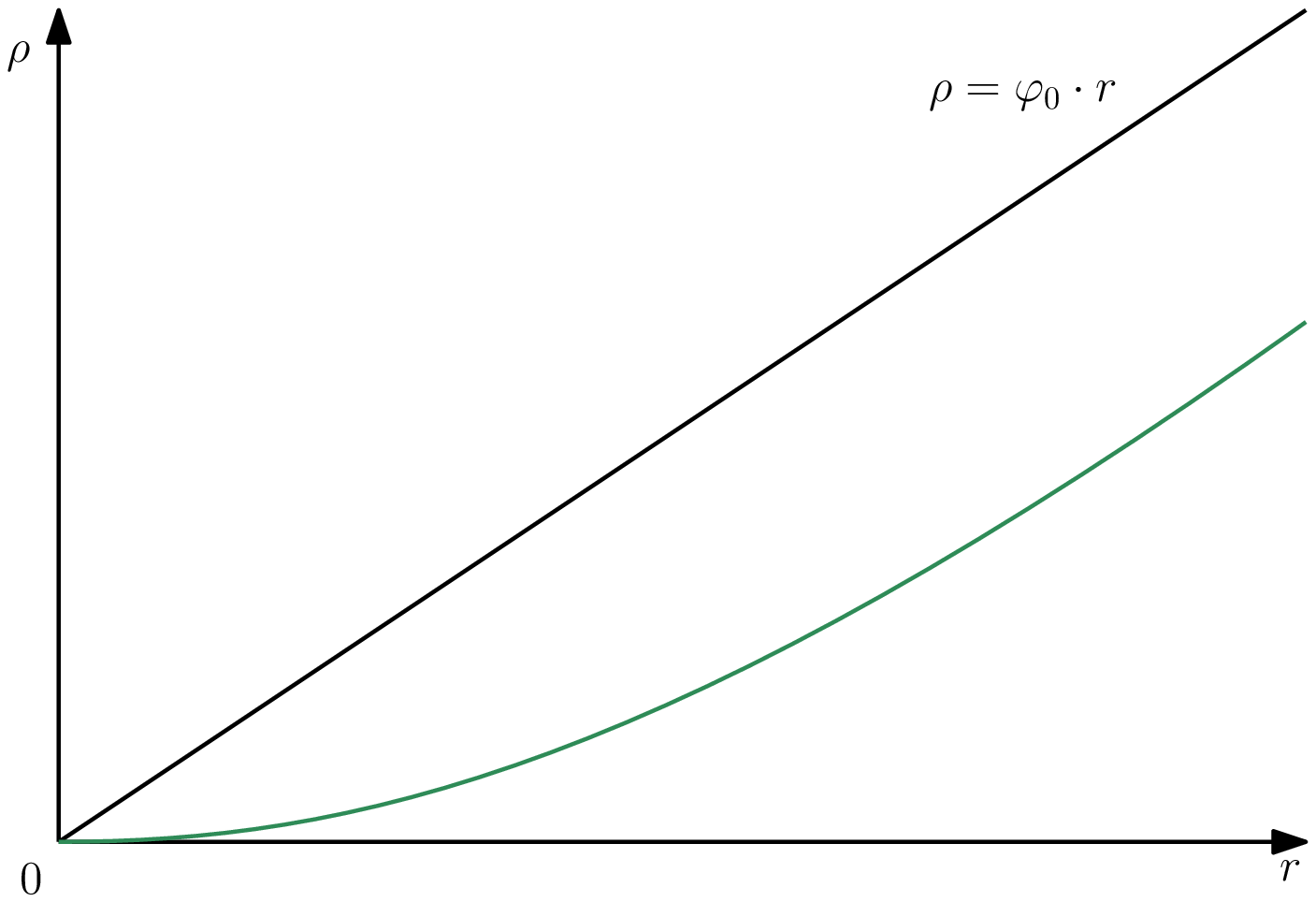}
                              \end{minipage}%
                          \begin{minipage}[c]{0.55\textwidth}
                           \includegraphics[scale=0.45]{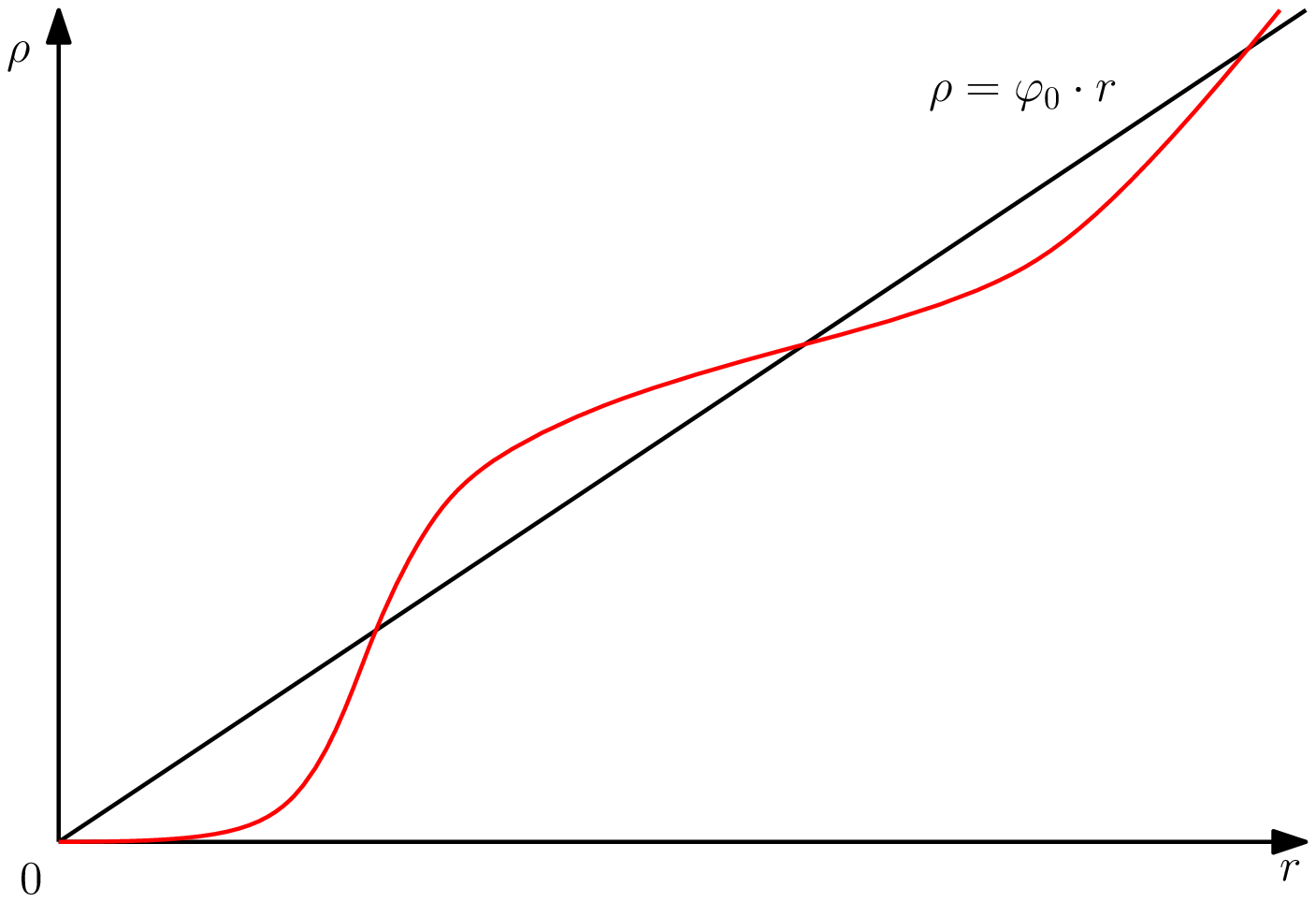}
                           \end{minipage}$$

 Much interesting information can be read off from the above pictures:
\begin{enumerate}
                  \item [(A)]
                              For each LOMSE $f$, there exists an entire analytic minimal graph whose tangent cone at infinity is exactly the LOC
                               associated to $f$ (see Theorem \ref{graph}).
                  \item [(B)]
                              For an LOMSE $f$ of Type (II),
                              there exist infinitely many analytic solutions to the Dirichlet problem
                              for $f_{\varphi_0}$; meanwhile, it also has a singular Lipschitz solution
                              which corresponds to the truncated LOC (see Theorem \ref{uni}).
                  \item[(C)] For Type (II),
                              although a Lipschitz solution arises for the boundary data $f_{\varphi_0}$,
                              there exists an $\epsilon>0$ such that
                                     the Dirichlet problem
                                      still has
                                     analytic solutions for $f_\varphi$
                                     whenenver
                                     $\varphi\in (\varphi_0,\varphi_0+\epsilon)$.
                  \item [(D)]
                              By the monotonicity of density for minimal submanifolds (currents) in Euclidean spaces (see \cite{fe,c-m}), LOCs associated to LOMSEs of Type (II) are all non-minimizing (see Theorem \ref{non-min}).

                  \end{enumerate}

 To the authors' knowledge, it seems to be the first time to have phenomena (B)-(C) observed,
                  and hard to foresee
                  the occurrence from the classical theory of partial differential equations.
                  Note that
                  the non-uniqueness (``at least three") 
                  in Lawson-Osserman \cite{l-o} heavily relies on
                  dimension $n=1$ to apply the work of T. Rad\'o. 
                  In our cases, $n$ can be $3$ or $5$ and
                  these examples perfectly demonstrate the non-uniquenss
                  of infinitely many, and moreover, the smooth and nonsmooth simultaneously.
                  We believe more mysteries and beauties hide behind. 

By the machinery of calibrations, the LOC associated to the Hopf map from $S^3$ onto $S^2$
(i.e. the LOMSE of $(3,2,2)$-type)
was shown area-minimizing by Harvey-Lawson \cite{h-l}.
It would be interesting to consider
 whether the associated LOC is area-minimizing for an LOMSE of $(n,p,k)$-type.
 In Theorem \ref{non-min}, we establish a partial negative answer to the question. On the other hand,
 in a subsequent paper \cite{x-y-z}, we explore this subject from a different point of view and
 confirm that all LOCs associated to LOMSEs of $(n,p,2)$-type are
area-minimizing (useful in geometric measure theory, e.g. see \cite{fe, z}).

\bigskip\bigskip

\Section{Lawson-Osserman maps}{Lawson-Osserman maps}

\subsection{Preliminaries on harmonic maps}
Let $(M^n,g)$ and $(N^m,h)$ be Riemannian manifolds
 and $\phi$ be a smooth mapping from $M$ to $N$.
The \textit{energy desity} of $\phi$
at $x\in M$
is defined to be
\begin{equation}
e(\phi):=\f{1}{2}\sum_{i=1}^n h(\phi_* e_i,\phi_* e_i).
\end{equation}
Here $\{e_1,\cdots,e_n\}$ is an orthonormal basis of $T_xM$.
The \textit{total energy} $E(\phi)$ is
the integral of $e(\phi)$ over $M$.

Let $\td{\n}$ be  Levi-Civita connection for $g$ and $\n$ the connection on $\phi^{-1}TN$ compatible with $h$.
Then the \textit{second fundamental form} of $\phi$ is given by
\begin{equation}
B_{XY}(\phi):=\n_{ X}(\phi_* Y)-\phi_*(\td{\n}_X Y),
\end{equation}
whose trace under $g$ is the \textit{tensor field} of $\phi$
\begin{equation}
\tau(\phi):=\sum_{i=1}^n B_{e_ie_i}(\phi).
\end{equation}
 If $\tau(\phi)$ vanishes indentically,
 then $\phi$ is called a \textit{harmonic map}.
 When $B\equiv 0$,
 $\phi$ is called \textit{totally geodesic}.
 It is well known that
 $\phi$ is harmonic if and only if it is a critical point of
 functional $E$.

For a smooth (vector-valued) function $f:(M,g)\ra \R^n$,
it follows $\tau(f)=\De_g(f)$
where $\De_g$ is the {\it Laplace-Beltrami operator} for $g$.
Hence the harmonicity is the same as that in the usual sense.

Given an isometric immersion $i: (M,g)\ra (N,h)$,
its second fundamental form can be
identified with the second fundamental form of $M$ in $N$
and
its tensor field regarded as the mean curvature vector field $\mathbf{H}$.
Therefore, $i$ is  harmonic if and only if it is an isometric minimal immersion,
and 
totally geodesic if and only if it is an isometric totally geodesic immersion.

In the case of Riemannian submersions, a classical result is the following.

 \begin{pro}\label{ER} (see e.g. Proposition 1.12 of \cite{e-r})
 A Riemannian submersion $\pi:(M,g)\ra (N,h)$ is harmonic if and only if each fiber of $\pi$ is minimal. 
 \end{pro}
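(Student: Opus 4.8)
The plan is to compute the tension field $\tau(\pi)$ pointwise by adapting the orthonormal frame to the splitting $T_xM=\mc{V}_x\oplus\mc{H}_x$ into the vertical space $\mc{V}_x=\ker(\pi_*)_x$ (tangent to the fiber) and its orthogonal complement, the horizontal space $\mc{H}_x$, and then to show that only the fiber's mean curvature survives. Fix $x\in M$ and choose an orthonormal basis $\{V_1,\dots,V_r,H_1,\dots,H_{n-r}\}$ of $T_xM$ with the $V_a$ spanning $\mc{V}_x$ and the $H_\alpha$ spanning $\mc{H}_x$; extend the $H_\alpha$ to basic fields, i.e. horizontal lifts of a local orthonormal frame $\{X_\alpha\}$ near $\pi(x)$, and the $V_a$ to local vertical fields. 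Since the tension field is a well-defined section of $\pi^{-1}TN$, computing it at $x$ with any such frame is legitimate, and I split $\tau(\pi)=\sum_a B_{V_aV_a}(\pi)+\sum_\alpha B_{H_\alpha H_\alpha}(\pi)$ into the two sums.

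For the horizontal sum I would invoke O'Neill's formulas for Riemannian submersions. For basic fields, the horizontal component of $\td\n_{H_\alpha}H_\alpha$ is the horizontal lift of $\n^N_{X_\alpha}X_\alpha$, while its vertical component is the O'Neill tensor value $A_{H_\alpha}H_\alpha$, which is annihilated by $\pi_*$ (and in fact vanishes by the antisymmetry $A_XY=-A_YX$ on horizontal fields). On the other hand, by the definition of the pull-back connection $\n$ on $\pi^{-1}TN$ one has $\n_{H_\alpha}(\pi_*H_\alpha)=\n^N_{X_\alpha}X_\alpha$. Hence the two terms in $B_{H_\alpha H_\alpha}(\pi)=\n_{H_\alpha}(\pi_*H_\alpha)-\pi_*(\td\n_{H_\alpha}H_\alpha)$ cancel and each horizontal contribution is zero. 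Geometrically this merely reflects that $\pi$ restricted horizontally is a local isometry, so it carries no tension in those directions.

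For the vertical sum the first term in $B_{V_aV_a}(\pi)=\n_{V_a}(\pi_*V_a)-\pi_*(\td\n_{V_a}V_a)$ drops out because $\pi_*V_a=0$. Splitting $\td\n_{V_a}V_a$ into vertical and horizontal parts and discarding the vertical part (again killed by $\pi_*$), I identify $\mc{H}\,\td\n_{V_a}V_a$ with the second fundamental form of the fiber evaluated on $(V_a,V_a)$. Summing over $a$ then gives $\tau(\pi)=-\pi_*\big(\sum_a\mc{H}\,\td\n_{V_a}V_a\big)=-\pi_*(\mathbf{H}^{\text{fib}})$, where $\mathbf{H}^{\text{fib}}$ is the (unnormalized) mean curvature vector of the fiber through $x$. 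Since $\mathbf{H}^{\text{fib}}$ is horizontal and $(\pi_*)_x$ maps $\mc{H}_x$ isomorphically onto $T_{\pi(x)}N$, the vanishing of $\tau(\pi)$ at $x$ is equivalent to $\mathbf{H}^{\text{fib}}=0$ at $x$; as $x$ ranges over $M$ this says precisely that every fiber is minimal, which proves both directions simultaneously.

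The main technical care is concentrated in the horizontal sum: one must verify that the cancellation is robust under the choice of frame extension and that the O'Neill identity is applied so that no residual $A$-tensor term is left behind. The antisymmetry of $A$ on horizontal arguments is exactly what guarantees $A_{H_\alpha}H_\alpha=0$, closing that gap. Once the horizontal cancellation is secured, the remainder is the routine identification of $\sum_a\mc{H}\,\td\n_{V_a}V_a$ with the fiber mean curvature, and the equivalence follows from the nondegeneracy of $\pi_*$ on $\mc{H}_x$.
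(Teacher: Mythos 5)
Your proof is correct: the horizontal contributions cancel exactly as you argue (for basic fields the pull-back term $\n_{H_\alpha}(\pi_*H_\alpha)=\n^N_{X_\alpha}X_\alpha$ matches $\pi_*(\td\n_{H_\alpha}H_\alpha)$, with the vertical $A$-part killed by $\pi_*$), and the vertical sum gives $\tau(\pi)=-\pi_*\bigl(\mathbf{H}^{\mathrm{fib}}\bigr)$, which vanishes if and only if the fiber is minimal because $\pi_*$ is an isomorphism on the horizontal space. The paper offers no proof of this proposition, citing Proposition 1.12 of Eells--Ratto instead, and your adapted-frame computation is precisely the standard argument of that reference, so there is nothing to contrast.
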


For Riemannian manifolds $M^n, N^m, \bar{N}$
 and
 smooth maps $\phi: M^n\ra N^m$, $\bar{\phi}:N^m \ra \bar{N}$,
a fundamental composition formula for tension fields (see Proposition 1.14 in \cite{e-r} or \S 1.4 of \cite{xin1})
is
\begin{equation}\label{com1}
\tau(\bar{\phi}\circ \phi)=\bar{\phi}_*(\tau(\phi))+\sum_{j=1}^n B_{\phi_* e_j,\phi_* e_j}(\bar{\phi}).
\end{equation}
When $\bar{\phi}$ is an isometric immersion, we simply write the formula as
\begin{equation}\label{com2}
\tau(\bar{\phi}\circ \phi)=\tau(\phi)+\sum_{j=1}^n B(\phi_* e_j,\phi_* e_j)
\end{equation}
where $B$ is the second fundamental form of $N$ in $\bar{N}$.
\bigskip

\subsection{Necessary and sufficient conditions for LOSs}\label{NSCL}
Throughout our paper, $S^d\subset \R^{d+1}$ be the $d$-dimensional unit sphere,
$g_d$ the canonical metric 
induced by
the inclusion map $i_d: S^d\ra \R^{d+1}$, and $B_d$ the second fundamental form of $S^d$ in $\R^{d+1}$.

Given smooth $f:S^n\rightarrow S^m$ and an acute angle $\th$, let $I_{f,\th}: S^n\ra S^{n+m+1}$
\begin{equation}
I_{f,\th}(x)=(\cos\th\cdot x,\sin\th\cdot f(x))
\end{equation}
be the embedding associated to $f$ and $\th$, and $g:=I_{f,\th}^*g_{n+m+1}$.
We shall study how to have a minimal $I_{f,\th}$ and thus an LOS $M_{f,\th}$.

Let $\mathbf{X}(x)$,
$\mathbf Y_1(x)$ and $\mathbf Y_2(x)$ be the position vectors of
 $I_{f,\th}(x)$ in $\R^{n+m+2}$,
$x$ in $\R^{n+1}$ and $f(x)$ in $\R^{m+1}$ respectively.
Then
\begin{equation}\label{position}
\mathbf{X}(x)=(\cos\th\mathbf{Y}_1(x),\sin\th\mathbf{Y}_2(x)).
\end{equation}

On the one hand,
$\De_g \mathbf X=\tau(\mathbf X)$.
By $\mathbf X=i_{n+m+1}\circ I_{f,\th}$ and \eqref{com2} ,
we have
\begin{equation}\label{x2}
\aligned
\De_g \mathbf{X}&=\tau(\mathbf{X})=\tau(i_{n+m+1}\circ I_{f,\th})
=\tau(I_{f,\th})+\sum_{j=1}^n B_{n+m+1}((I_{f,\th})_* e_j,(I_{f,\th})_* e_j)\\
&=\mathbf{H}-\sum_{j=1}^n \lan (I_{f,\th})_* e_j,(I_{f,\th})_* e_j\ran\mathbf{X}
=\mathbf{H}-\sum_{j=1}^n g(e_j,e_j)\mathbf{X}=\mathbf{H}-n\mathbf{X}.
\endaligned
\end{equation}
Here $\{e_1,\cdots,e_n\}$ is an orthonormal basis of $(T_x S^n,g)$, $\lan \cdot,\cdot\ran$ the Euclidean inner product,
and
$\mathbf H$ the mean curvature field of $(S^n,g)$ in $S^{n+m+1}$.
We remark that $\mathbf H\bot \mathbf {X}$ pointwise.

On the other hand, similarly
for $\mathbf{Y_1}=i_n\circ \mathbf{Id}$ where $\mathbf{Id}$ is the identity map from $(S^n,g)$ to $(S^n,g_n)$
and $\mathbf{Y_2}=i_m\circ f$,
we gain
\begin{equation}\label{y1}
\De_g \mathbf{Y}_1=\tau(\mathbf{Y}_1)=\tau(i_n\circ \mathbf{Id})
=\tau(\mathbf{Id})-2e(\mathbf{Id})\mathbf{Y}_1,
\end{equation}
where $\tau(\mathbf{Id})\bot \mathbf{Y}_1$,
and
\begin{equation}\label{y2}
\De_g \mathbf{Y}_2=\tau(\mathbf{Y}_2)
=\tau(f)-2e(f)\mathbf{Y}_2,
\end{equation}
with $\tau(f)\bot \mathbf Y_2$.
Therefore,
\begin{equation}\label{x1}
\De_g \mathbf{X}
=\Big(\cos\th \big(\tau(\mathbf{Id})-2e(\mathbf{Id}) \mathbf{Y}_1\big),\sin\th \big(\tau(f)-2e(f) \mathbf{Y}_2\big)\Big).
\end{equation}

By comparing \eqref{x2} and \eqref{x1} we obtain
\begin{equation}\label{mean}
\mathbf{H}=\Big(\cos\th \big(\tau(\mathbf{Id})-(2e(\mathbf{Id})-n) \mathbf{Y}_1\big),\sin\th \big(\tau(f)-(2e(f)-n) \mathbf{Y}_2\big)\Big).
\end{equation}
We shall employ this relationship to derive the following characterization of LOS.

\begin{thm}\label{con}

For smooth $f:S^n\rightarrow S^m$ and $\th\in (0,\pi/2)$,
$I_{f,\th}$ is minimal (i.e., $M_{f,\th}$ is an LOS)
if and only if the following conditions hold:
\begin{enumerate}
\item[(a)] $f:(S^n,g)\ra (S^m,g_m)$ is harmonic;
\item[(b)] For each $x\in S^n$ and the singular values $\la_1,\cdots,\la_n$  of
$(f_*)_x: (T_x S^n,g_n)\ra (T_{f(x)} S^m,g_m)$,
$\sum\limits_{j=1}^n \f{1}{\cos^2\th+\sin^2\th \la_j^2}=n.$
\end{enumerate}

\end{thm}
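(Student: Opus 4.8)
The plan is to read the characterization directly off the mean curvature formula \eqref{mean}, which has already done most of the work. Since $\th\in(0,\pi/2)$ forces $\cos\th,\sin\th\neq 0$, the condition $\mathbf H=0$ is equivalent to the simultaneous vanishing of the two blocks
$$V_1:=\tau(\Id)-(2e(\Id)-n)\mathbf Y_1,\qquad V_2:=\tau(f)-(2e(f)-n)\mathbf Y_2.$$
First I would invoke the orthogonality relations already recorded, $\tau(\Id)\bot\mathbf Y_1$ and $\tau(f)\bot\mathbf Y_2$: because $\tau(\Id)$ is tangent to $S^n$ while $\mathbf Y_1$ is the unit radial direction, $V_1=0$ splits into its tangential and radial parts $\tau(\Id)=0$ and $2e(\Id)=n$; likewise $V_2=0$ is equivalent to $\tau(f)=0$ together with $2e(f)=n$. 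Thus minimality is equivalent to the four conditions $\tau(\Id)=0$, $\tau(f)=0$, $2e(\Id)=n$, $2e(f)=n$, and the task reduces to showing that these collapse to (a)--(b).

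Next I would evaluate the two energy densities in a singular-value frame. At a fixed $x$ choose a $g_n$-orthonormal basis $u_1,\dots,u_n$ of $T_xS^n$ diagonalizing the symmetric form $f^*g_m$, so that $\lan f_*u_i,f_*u_j\ran=\la_i^2\de_{ij}$ and $g(u_i,u_j)=\mu_i\de_{ij}$ with $\mu_j:=\cos^2\th+\sin^2\th\,\la_j^2$. Then $e_j:=u_j/\sqrt{\mu_j}$ is $g$-orthonormal, and a direct substitution gives
$$2e(\Id)=\sum_{j=1}^n\f{1}{\mu_j},\qquad 2e(f)=\sum_{j=1}^n\f{\la_j^2}{\mu_j}.$$
The first expression is exactly the left-hand side of (b), so $2e(\Id)=n$ \emph{is} condition (b). Moreover $\cos^2\th\cdot(1/\mu_j)+\sin^2\th\cdot(\la_j^2/\mu_j)=1$ for each $j$, which sums to the identity $\cos^2\th\,(2e(\Id))+\sin^2\th\,(2e(f))=n$. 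Consequently $2e(\Id)=n$ and $2e(f)=n$ are equivalent, both amounting to (b).

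The remaining point is to tie $\tau(\Id)=0$ to the harmonicity (a) of $f$. Here I would use that $\mathbf H$, being a mean curvature vector, is normal to the immersed $(S^n,g)$, hence orthogonal to each $(I_{f,\th})_*e_j=(\cos\th\,e_j,\sin\th\,f_*e_j)$. Pairing and using $e_j\bot\mathbf Y_1$, $f_*e_j\bot\mathbf Y_2$ turns this into the pointwise relation
$$\cos^2\th\,\lan\tau(\Id),e_j\ran+\sin^2\th\,\lan\tau(f),f_*e_j\ran=0,\qquad j=1,\dots,n,$$
which holds irrespective of minimality. If (a) holds, the second term drops and $\lan\tau(\Id),e_j\ran=0$ for all $j$; since $\{e_j\}$ is a basis of $T_xS^n$ and $\tau(\Id)$ already lies in $T_xS^n$, positivity of the induced inner product forces $\tau(\Id)=0$. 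Assembling: if $I_{f,\th}$ is minimal then all four conditions hold, in particular (a) and (b); conversely (a) yields $\tau(f)=0$ and, through the normality relation, $\tau(\Id)=0$, while (b) yields $2e(\Id)=n$ and, through the energy identity, $2e(f)=n$, so $V_1=V_2=0$ and $\mathbf H=0$.

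The main obstacle is precisely the implication $\text{(a)},\text{(b)}\Rightarrow\tau(\Id)=0$: a priori, minimality demands harmonicity of the auxiliary identity map $\Id:(S^n,g)\to(S^n,g_n)$ as a separate requirement, and it is not evident that this is subsumed by the harmonicity of $f$. The normality of $\mathbf H$, combined with the fact that $\{e_j\}$ spans the tangent space, is exactly what dissolves this apparent extra condition; everything else is bookkeeping on \eqref{mean} and the two short frame computations.
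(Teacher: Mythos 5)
Your proposal is correct and takes essentially the same route as the paper's proof: both split \eqref{mean} into tangential and radial parts to extract $\tau(\Id)=\tau(f)=0$ and $2e(\Id)=2e(f)=n$, identify these energy conditions with (b) via the same singular-value frame and the pointwise identity $\cos^2\th\cdot\mu_j^{-1}+\sin^2\th\,\la_j^2\,\mu_j^{-1}=1$, and eliminate the apparent extra condition $\tau(\Id)=0$ by pairing $\mathbf{H}$ against the tangent vectors $(I_{f,\th})_*v$. The only cosmetic difference is that you derive the normality relation before imposing (a)--(b), whereas the paper substitutes (a)--(b) into \eqref{mean} first and then pairs; the content is identical.
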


\begin{proof}






 We shall use two equivalent statements of Condition (b):
\begin{enumerate}
\item [(c)] The energy density of $\mathbf{Id}:(S^n,g)\ra (S^n,g_n)$ is $\f{n}{2}$ everywhere.
\item [(d)] The energy density of $f:(S^n,g)\ra (S^m,g_m)$ is $\f{n}{2}$ everywhere.
\end{enumerate}
Let us first show (b)$\Leftrightarrow$(c).
By choosing an orthonormal basis $\{\ep_1,\cdots,\ep_n\}$ of $(T_x S^n,g_n)$
for
\begin{equation}
\lan f_* \ep_j,f_*\ep_k\ran=\la_j^2 \de_{jk},
\end{equation}
and setting
\begin{equation}\label{basis3}
e_j:=\f{1}{\sqrt{\cos^2\th+\sin^2\th\la_j^2}}\ep_j,
\end{equation}
we have
\begin{equation}\aligned
g(e_j,e_k)&=\lan (I_{f,\th})_* e_j,(I_{f,\th})_* e_k\ran\\
&=\lan (\cos\th e_j,\sin\th f_* e_j),(\cos\th e_k,\sin\th f_* e_k)\ran\\
&=\cos^2\th \lan e_j,e_k\ran+\sin^2 \th\lan f_* e_j,f_* e_k\ran\\
&=\de_{jk}.
\endaligned
\end{equation}
This means $\{e_1,\cdots,e_n\}$ form an orthonormal basis of $(T_x S^n,g)$. Here and in the sequel,
we call such $\{\ep_1,\cdots,\ep_n\}$ and $\{e_1,\cdots,e_n\}$ the {\bf S-bases} of
$(S^n,g_n)$ and $(S^n,g)$ for $f$  (w.r.t. $\la_1,\cdots,\la_n$ and $\f{\la_1}{\sqrt{\cos^2\th+\sin^2\th \la_1^2}},\cdots,\f{\la_n}{\sqrt{\cos^2\th+\sin^2\th\la_n^2}}$ respectively).
Then
$$2e(\mathbf{Id})=\sum_{j=1}^n\lan e_j,e_j\ran=\sum_{j=1}^n \f{1}{\cos^2\th+\sin^2\th\la_j^2},$$
and therefore (b)$\Leftrightarrow$(c).
Also note that for a fixed acute angle $\th$
$$\sum\limits_{j=1}^n \f{1}{\cos^2\th+\sin^2\th \la_j^2}=n
\ \ \Leftrightarrow\ \
\sum\limits_{j=1}^n \f{\la_j^2}{\cos^2\th+\sin^2\th \la_j^2}=n.$$
So  (b) and (d) are equivalent as well.

Apparently, $\mathbf{H}=0$ in \eqref{mean} implies $\tau(\mathbf{Id})=\tau(f)=0$ and $e(\mathbf{Id})=e(f)=\f{n}{2}$, i.e., Conditions (a)-(b).
Conversely, under Conditions (a)-(b),
 \eqref{mean} becomes
$\mathbf{H}=(\cos\th\cdot \tau(\mathbf{Id}),0).$
Observing $\mathbf{H}\bot (I_{f,\th})_* (T_x S^n)$,
$$0=\lan \mathbf{H},(I_{f,\th})_* v\ran=\big\lan (\cos\th\cdot \tau(\mathbf{Id}),0),(v,f_*v)\big\ran=\cos\th \lan \tau(\mathbf{Id}),v\ran$$
for every $v\in T_x S^n$,
we know
\begin{equation}\label{Idh}
\tau(\mathbf{Id})=0
\end{equation}
 and therefore $\mathbf{H}=0$.
\end{proof}


\bigskip
\subsection{Characterizations of trivial LOMs}

For an isometric totally geodesic embedding $f:(S^n,g_n)\ra (S^m,g_m)$,
$M_{f,\th}$ is totally geodesic in $S^{n+m+1}$ for arbitrary $\th\in (0,\pi/2)$.
In such case {$f$} is called a {\bf trivial LOM}.

\begin{pro}\label{g1}

For an LOM $f:S^n\ra S^m$, the followings are equivalent:

\begin{enumerate}
\item[(i)] All singular values of $(f_*)_x$ are equal at each $x$.
\item[(ii)] All singular values of $(f_*)_x$ are equal to $1$.
\item[(iii)] $f:(S^n,g_n)\ra (S^m,g_m)$ is an isometric immersion.
\item[(iv)] $f:(S^n,g_n)\ra (S^m,g_m)$ is an isometric totally geodesic embedding.
\item[(v)] For every $\th\in (0,\pi/2)$, $M_{f,\th}$ is totally geodesic.
\item[(vi)] There exists $\th\in (0,\pi/2)$, such that $M_{f,\th}$ is a totally geodesic LOS.

\end{enumerate}

\end{pro}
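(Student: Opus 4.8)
The plan is to prove the six conditions equivalent by closing the single cycle (i)$\Rightarrow$(ii)$\Rightarrow$(iii)$\Rightarrow$(iv)$\Rightarrow$(v)$\Rightarrow$(vi)$\Rightarrow$(i). Throughout I would exploit that $f$ is an LOM, so by Theorem \ref{con} there is a fixed acute angle $\th$ for which the normalization (b), $\sum_{j}(\cos^2\th+\sin^2\th\,\la_j^2)^{-1}=n$, holds at every point, and for which $f$ is harmonic with respect to the induced metric $g$.

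The elementary links come first. For (i)$\Rightarrow$(ii), writing the common singular value at $x$ as $\la(x)$ and substituting into (b) collapses the sum to $n(\cos^2\th+\sin^2\th\,\la^2)^{-1}=n$, so $\cos^2\th+\sin^2\th\,\la^2=1=\cos^2\th+\sin^2\th$; since $\sin\th\neq0$ this forces $\la\equiv1$. The equivalence (ii)$\Leftrightarrow$(iii) is the tautology that all singular values of $(f_*)_x$ equal $1$ precisely when $f^*g_m=g_n$, i.e. when $f$ is an isometric immersion. For (iv)$\Rightarrow$(v), once $f$ is the restriction to $S^n$ of a linear isometry onto its image, the map $I_{f,\th}(x)=(\cos\th\,x,\sin\th\,f(x))$ is the restriction of a linear map $L:\R^{n+1}\ra\R^{n+m+2}$ that preserves norms (because $\cos^2\th+\sin^2\th=1$); hence $M_{f,\th}=L(\R^{n+1})\cap S^{n+m+1}$ is a great $n$-sphere, totally geodesic for every $\th$. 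Finally (v)$\Rightarrow$(vi) is immediate.

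The two substantial steps are (iii)$\Rightarrow$(iv) and (vi)$\Rightarrow$(i). For (iii)$\Rightarrow$(iv) I would note that under (iii) the induced metric satisfies $g=g_n$, since $g(v,w)=\cos^2\th\lan v,w\ran+\sin^2\th\lan f_*v,f_*w\ran=\lan v,w\ran$ once $f$ is isometric; then (a) makes $f$ harmonic on $(S^n,g_n)$, and with $\tau(f)=0$ and $e(f)=\f{n}{2}$ the formula \eqref{y2} collapses to $\De_{g_n}f=-n\,f$. Because the first nonzero eigenvalue of $(S^n,g_n)$ is exactly $n$, with eigenspace spanned by the linear coordinate functions, each component of $f$ must be linear; thus $f=\tilde A|_{S^n}$ for a linear $\tilde A$, and $|\tilde A x|\equiv1$ on the unit sphere makes $\tilde A$ an isometry onto its image, so $f$ is a totally geodesic embedding. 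For (vi)$\Rightarrow$(i) I would run this geometrically in reverse: a totally geodesic $M_{f,\th}\cong S^n$ in $S^{n+m+1}$ is a great subsphere, so it lies in an $(n+1)$-dimensional subspace $V\subset\R^{n+m+2}$. Projecting $V$ onto the first factor $\R^{n+1}$ is surjective, because $x\mapsto\cos\th\,x$ already sweeps out all directions, and hence (by equality of dimensions) an isomorphism; this exhibits $V$ as the graph of a linear $A:\R^{n+1}\ra\R^{m+1}$, whence $\sin\th\,f(x)=\cos\th\,Ax$ gives $f(x)=\cot\th\,Ax=:\tilde Ax$. As $|\tilde A x|\equiv1$ on $S^n$, the map $\tilde A$ is a linear isometry onto its image, all singular values of $(f_*)_x$ equal $1$, and (i) (indeed (ii)) follows.

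The main obstacle is (iii)$\Rightarrow$(iv): it is the only place a genuine rigidity input is needed beyond linear algebra and the definitions. The crux is to recognize the eigenvalue produced by \eqref{y2} as the first eigenvalue of the round sphere and to invoke the classification of the corresponding eigenfunctions as linear, which is exactly what upgrades \emph{minimal isometric immersion} to \emph{totally geodesic}. Some care is also required in (vi)$\Rightarrow$(i) to confirm that the totally geodesic $S^n$ has full dimension $n$, so that the first-factor projection is an isomorphism rather than merely onto a proper subspace.
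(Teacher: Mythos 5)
Your proof is correct, and it closes exactly the same cycle (i)$\Rightarrow$(ii)$\Rightarrow$(iii)$\Rightarrow$(iv)$\Rightarrow$(v)$\Rightarrow$(vi)$\Rightarrow$(i) as the paper, but the engine driving the one substantive implication (iii)$\Rightarrow$(iv) is genuinely different. The paper deduces it in one line from Condition (a) of Theorem \ref{con} together with the Gauss equations: since (iii) forces $g=g_n$, harmonicity makes $f$ a minimal isometric immersion between unit spheres of equal curvature, and tracing the Gauss equation (minimality kills the term $\bigl|\sum_i B(e_i,e_i)\bigr|^2$) yields $\sum_{i,j}|B(e_i,e_j)|^2=0$, i.e. $B\equiv 0$, whence $f$ is totally geodesic. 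You instead go spectral: from \eqref{y2} with $\tau(f)=0$ and $e(f)=\f{n}{2}$ you obtain $\De_{g_n}f=-nf$, identify $n$ as the first nonzero eigenvalue of the round sphere, whose eigenspace consists precisely of restrictions of linear functions, and conclude $f=\tilde A|_{S^n}$ for a linear isometry $\tilde A$ --- a Takahashi-type rigidity argument. Both are sound; the paper's route is shorter and purely local, while yours buys more: the explicit linear model is then reused to make (iv)$\Rightarrow$(v) transparent (the map $I_{f,\th}$ becomes the restriction of a norm-preserving linear map, so $M_{f,\th}$ is a great sphere) and to give an honest proof of (vi)$\Rightarrow$(i) via the great-subsphere-as-graph-of-a-linear-map argument, including the dimension check that makes the first-factor projection an isomorphism --- steps the paper simply declares trivial. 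Your spectral computation also foreshadows the machinery of Theorem \ref{npk}, where the same eigenvalue identity reappears with the degree-one case excluded. The remaining links, (i)$\Rightarrow$(ii) via Condition (b) of Theorem \ref{con} and the tautological (ii)$\Leftrightarrow$(iii), coincide with the paper's.
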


\begin{proof}

(i)$\Rightarrow$(ii) immediately follows from Condition (b) in Theorem \ref{con};
 (iii)$\Rightarrow$(iv) is a direct corollary
of Condition (a) in Theorem \ref{con} and the {Gauss equations} of submanifold;
and (ii)$\Rightarrow$(iii) and
(iv)$\Rightarrow$(v)$\Rightarrow$(vi)$\Rightarrow$(i) are trivial.
\end{proof}

\begin{cor}\label{cor1}
Let $f:S^n\ra S^m$ be a smooth map.
Then
\begin{itemize}
\item If $n\geq 2$ and $m=1$, then $f$ cannot be an LOM.

\item If $n\leq 2$ and $m\geq n$, then $f$ is an LOM if and only if $f$ is a trivial one.
\end{itemize}
\end{cor}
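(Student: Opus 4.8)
The plan is to treat the two bullets separately, in each case reducing to the criteria of Theorem \ref{con} and Proposition \ref{g1}. For the first bullet the decisive ingredient is Condition (a) (harmonicity) combined with the topology of the target $S^1$; for the second it is Condition (i) of Proposition \ref{g1}, which I intend to make automatic by exploiting either the number of singular values (when $n=1$) or the conformal type of the domain surface (when $n=2$). Since Proposition \ref{g1} asserts that for an LOM the six conditions (i)--(vi) are equivalent, once I verify (i) the triviality of $f$ follows at once.

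For the first bullet I would argue by contradiction: suppose $f:S^n\ra S^1$ is an LOM for some $\th\in(0,\pi/2)$. By Condition (a) of Theorem \ref{con}, $f:(S^n,g)\ra(S^1,g_1)$ is harmonic. Because $n\geq 2$, $S^n$ is simply connected, so $f$ lifts through the universal covering $p:\R\ra S^1$ to a smooth function $\td f:S^n\ra\R$ with $f=p\circ\,\td f$. As $p$ is a local isometry, hence totally geodesic, the composition formula \eqref{com1} gives $\tau(f)=p_*\big(\tau(\td f)\big)=p_*(\De_g\td f)$; since $p_*$ is injective and $\tau(f)=0$, we obtain $\De_g\td f=0$, so $\td f$ is a harmonic function on the compact manifold $(S^n,g)$ and therefore constant. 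Then $f$ is constant, all singular values of $(f_*)_x$ vanish, and Condition (b) of Theorem \ref{con} degenerates to $n/\cos^2\th=n$, forcing $\th=0$ and contradicting $\th\in(0,\pi/2)$.

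For the second bullet, the implication ``trivial $\Rightarrow$ LOM'' is immediate from the remark preceding Proposition \ref{g1}, since a totally geodesic $M_{f,\th}$ is in particular minimal. For the converse it suffices to verify Condition (i), that the singular values of $(f_*)_x$ coincide at each $x$. When $n=1$ this is automatic, as $(f_*)_x$ has a single singular value, and Proposition \ref{g1} at once gives that $f$ is trivial. When $n=2$ I would use the harmonicity from Condition (a): viewing $(S^2,g)$ as a Riemann surface, the Hopf differential $\big(f^*g_m\big)^{(2,0)}$ of the harmonic map $f$ is a holomorphic quadratic differential, which must vanish because $S^2$ has genus $0$. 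Hence $f$ is weakly conformal with respect to $g$, i.e. the two singular values $\td\la_1,\td\la_2$ of $f:(S^2,g)\ra(S^m,g_m)$ agree. Since by \eqref{basis3} one has $\td\la_j^2=\la_j^2/(\cos^2\th+\sin^2\th\,\la_j^2)$ and $t\mapsto t/(\cos^2\th+\sin^2\th\,t)$ is strictly increasing on $[0,\infty)$, this forces $\la_1=\la_2$, establishing Condition (i), and Proposition \ref{g1} again yields that $f$ is trivial.

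The routine parts — the lifting computation and the monotone-function bookkeeping — are standard; the one genuinely substantive step is the $n=2$ case, where I expect the main obstacle to be invoking the vanishing of the Hopf differential cleanly within the paper's setup. The delicate point is that the harmonicity in Condition (a) is taken with respect to the pullback metric $g$ rather than $g_n$, so the holomorphic-quadratic-differential argument must be run on the conformal structure determined by $g$, and the resulting conformality must then be translated back into equality of the $g_n$-singular values $\la_j$ through \eqref{basis3}. Once that translation is secured, the strict monotonicity collapses everything to $\la_1=\la_2$ and the conclusion follows.
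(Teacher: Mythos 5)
Your proof is correct and follows essentially the same route as the paper: everything is reduced to Theorem \ref{con} and Proposition \ref{g1}, with the lift-to-$\R$-plus-maximum-principle argument for $m=1$ and the conformality of harmonic maps from the $2$-sphere for $n=2$ (your Hopf-differential argument is exactly the standard proof of the fact the paper cites from Schoen--Yau). The only deviations are cosmetic: the paper derives the $m=1$ contradiction from Proposition \ref{g1}(ii) rather than from Condition (b) directly, and in the $n=2$ case it applies conformality to both $\mathbf{Id}$ and $f$ to verify \ref{g1}(iii), whereas you apply it to $f$ alone and verify \ref{g1}(i) via the strict monotonicity of $t\mapsto t/(\cos^2\th+\sin^2\th\, t)$ --- equivalent by Proposition \ref{g1}.
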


\begin{proof}
We study each case according to values of $n$ and $m$.

\textbf{Case I. }$n=1$.
 $(f_*)_x$ has only one singular value. By (i) of
Proposition \ref{g1}, $f$ is an LOM if and only if $f$ is a trivial one.

\textbf{Case II. }$n\geq 2$, $m=1$.
If there were one LOM $f$, then by Theorem \ref{con}
$f:(S^n,g)\ra S^1$ is harmonic.
So is its lifting map $\td{f}:(S^n,g)\ra\R$.
But the strong maximal principle forces $\td{f}$ (and therefore $f$) to be constant.
This contradicts (ii) of Proposition \ref{g1}.
So there are no LOMs in the setting.

\textbf{Case III. }$n=2, m\geq 2$.
Assume $f$ is an LOM.
Then by Theorem \ref{con} and \eqref{Idh}
both $\mathbf{Id}: (S^2,g)\ra (S^2,g_2)$ and $f: (S^2,g)\ra (S^m,g_m)$ are harmonic.
 Since every harmonic map from $2$-sphere
 (equipped with arbitrary metric)
 is conformal (see \S I.5 of \cite{s-y}),
there exist an orthonormal basis $\{e_1,e_2\}$ of $(T_x S^2,g)$ with
$$\lan e_1,e_1\ran =\lan e_2,e_2\ran=e(\mathbf{Id})=1,\quad \lan e_1,e_2\ran=0$$
and
$$\lan f_* e_1,f_* e_1\ran=\lan f_* e_2,f_* e_2\ran=e(f)=1,\quad \lan f_* e_1,f_* e_2\ran=0.$$
Hence $f:(S^2,g_2)\ra (S^m,g_m)$ is an isometric immersion. By (iii) of Proposition \ref{g1},
$f:(S^2,g_2)\ra (S^m,g_m)$ is a trivial LOM.
\end{proof}


\bigskip

\subsection{Nontrivial LOMSEs}

In this subsection we focus on nontrivial LOMSEs.
We first establish a useful structure theorem for LOMSEs.

\begin{thm}\label{g2}
For smooth $f:S^n\ra S^m$, the followings are equivalent:
\begin{enumerate}
\item[(i)] $f$ is a nontrivial LOMSE, namely for each $x\in S^n$ all nonzero singular values of $(f_*)_x$ are equal.
\item[(ii)] $f$ is an LOM and has two constant singular values $0$ and $\la>0$
of constant multiplicities $(n-p)$ and $p$ respectively everywhere.
\item[(iii)] There exist a $p$-dimensional Riemannian manifold $(P,h)$, a real number $\la>\sqrt{\f{n}{p}}$,
$\pi:S^n\rightarrow P$ and
 $i:P\rightarrow S^m$,
 such that $f=i\circ \pi$, $\pi:(S^n,g_n)\ra (P,h)$ is a harmonic Riemannian submersion with connected fibers
 and
 $i:(P,\la^2 h)\ra (S^m,g_m)$ an isometric minimal immersion.

\end{enumerate}
Moreover, if $f$ satisfies one of the above, then $M_{f,\th}$ is an LOS
 exactly when
 \begin{equation}\label{th}
\th=\arccos\sqrt{\f{n-p}{n(1-\la^{-2})}}.
\end{equation}
\end{thm}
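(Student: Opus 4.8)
The plan is to establish the cycle (i) $\Rightarrow$ (ii) $\Rightarrow$ (iii) $\Rightarrow$ (i), reading the angle formula off Condition (b) of Theorem \ref{con} at the end; throughout I fix an acute $\theta$ for which $M_{f,\theta}$ is an LOS and write $\mathcal V:=\ker f_*$, $\mathcal H:=\mathcal V^\perp$ (orthogonal for $g_n$). For (i) $\Rightarrow$ (ii), at each $x$ the hypothesis says the singular values of $(f_*)_x$ are $0$ with multiplicity $n-p(x)$ and one positive value $\lambda(x)$ with multiplicity $p(x)$. Substituting this into the energy-density form (d) of Condition (b) yields $p(x)\,\frac{\lambda(x)^2}{\cos^2\theta+\sin^2\theta\,\lambda(x)^2}=n$, i.e. $p(x)=n\sin^2\theta+n\cos^2\theta\,\lambda(x)^{-2}$. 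Here $\lambda(x)=\|(f_*)_x\|$ is the operator norm, hence continuous, and is strictly positive (otherwise the left side would vanish against $n$), so the right-hand side is continuous. An integer-valued continuous function on the connected manifold $S^n$ is constant, whence $p(x)\equiv p$ and then $\lambda(x)\equiv\lambda$, with $p<n$ by nontriviality and Proposition \ref{g1}.

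For (ii) $\Rightarrow$ (iii), constant rank $p$ makes the fibers compact $(n-p)$-submanifolds; I take $P$ to be the leaf space of the foliation by their connected components, $\pi\colon S^n\to P$ the projection and $i\colon P\to S^m$ the induced map, so that $f=i\circ\pi$. Since all nonzero singular values equal $\lambda$, $(f_*)_x|_{\mathcal H}$ is $\lambda$ times a linear isometry; setting $h:=\lambda^{-2}i^*g_m$ makes $\pi\colon(S^n,g_n)\to(P,h)$ a Riemannian submersion and $i\colon(P,\lambda^2 h)\to(S^m,g_m)$ isometric. To obtain harmonicity of $\pi$ and minimality of $i$, I feed $f=i\circ\pi$ into the composition formula \eqref{com2} computed in the metric $g$: as $i$ is isometric it splits $\tau_g(f)$ into a part tangent to $i(P)$, a constant multiple of $\tau_g(\pi)$, and a normal part, a constant multiple of the mean curvature of $i$. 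Condition (a), $\tau_g(f)=0$, forces both to vanish; the second gives $i$ minimal (mean curvature being scale-insensitive), while the first gives $\tau_g(\pi)=0$, i.e. (using that the tension field is unchanged under constant rescaling of the target metric, together with Proposition \ref{ER}) the fibers are minimal in $(S^n,g)$. A short Koszul computation shows that rescaling $\mathcal V$ and $\mathcal H$ by the constants $\cos^2\theta$ and $b^2:=\cos^2\theta+\sin^2\theta\lambda^2$ multiplies the fiber mean curvature only by a nonzero constant, so the fibers are also minimal for $g_n$, whence $\pi\colon(S^n,g_n)\to(P,h)$ is harmonic. Finally $\lambda^2=n\cos^2\theta/(p-n\sin^2\theta)$ is equivalent to $\lambda>\sqrt{n/p}$ precisely because $p<n$.

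For (iii) $\Rightarrow$ (i) and the angle, a Riemannian submersion $\pi$ together with the $\lambda$-homothetic isometric immersion $i$ makes $(f_*)_x$ annihilate $\mathcal V=\ker\pi_*$ and scale $\mathcal H$ by $\lambda$, so the singular values are again $0$ (multiplicity $n-p$) and $\lambda$ (multiplicity $p$); these are constant, giving the LOMSE pattern once $f$ is seen to be a nontrivial LOM. For the LOM property I verify Theorem \ref{con}: Condition (b) reads $\frac{n-p}{\cos^2\theta}+\frac{p}{\cos^2\theta+\sin^2\theta\lambda^2}=n$, and clearing denominators produces a quadratic in $\cos^2\theta$ with roots $1$ (the excluded $\theta=0$) and $\cos^2\theta=\frac{n-p}{n(1-\lambda^{-2})}$, the latter lying in $(0,1)$ exactly because $1<n/p<\lambda^2$. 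This simultaneously yields formula \eqref{th} and its uniqueness among acute angles. Condition (a), $\tau_g(f)=0$, follows by running the composition argument of the previous paragraph in reverse: minimal fibers (equivalently harmonic $\pi$) and minimal $i$ make both summands vanish. Nontriviality is immediate since $p<n$ forces genuine zero singular values, ruling out the equal-singular-value cases of Proposition \ref{g1}.

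The step I expect to be the main obstacle is the construction of $P$ in (ii) $\Rightarrow$ (iii): one must verify that the foliation by connected fibers is simple, so that the leaf space is a bona fide smooth $p$-manifold and $\pi$ a smooth submersion. I would secure this from the constant-rank theorem (the local normal form of $f$), the compactness and hence properness afforded by $S^n$, and the observation that $\mathcal H$ furnishes a complete Ehresmann connection along which $f_*$ is conformal with the single constant factor $\lambda$; an Ehresmann-type argument then promotes $f$ to a fiber bundle over its image and identifies $P$ with the base. The reconciliation of the two metrics $g$ and $g_n$ is, by contrast, only constant-factor bookkeeping, dispatched by the Koszul computation indicated above.
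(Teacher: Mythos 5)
Your proposal is correct and follows essentially the same route as the paper: the continuity-plus-integrality argument from Condition (b) of Theorem \ref{con} for (i)$\Rightarrow$(ii), the leaf-space factorization $f=i\circ\pi$ (the paper's Lemma \ref{lem2}), the tension-field splitting via the composition formula \eqref{com2}, and the transfer of fiber minimality between $g$ and $g_n$ by a constant-conformal Koszul computation (precisely the paper's Lemma \ref{lem3}) combined with Proposition \ref{ER}, with (iii)$\Rightarrow$(i) run in reverse and \eqref{th} read off Condition (b). One small caution on the step you flagged: since $i$ is in general only an immersion (not injective), the leaf space $P$ should be built abstractly from connected components of fibers, as in the paper's Appendix \S\ref{App1}, rather than identified with a fiber bundle over the image $f(S^n)$.
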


We shall utilize next two lemmas in proving Theorem \ref{g2}.

\begin{lem}\label{lem2}
Let $(\bar{N}^n,\bar{g}), (N^m,g)$ be Riemannian manifolds.
Assume further $\bar{N}$ is connected and compact, and
$\phi:(\bar{N},\bar{g})\ra (N,g)$ a smooth map
with singular values 0 and 1 of multiplicities $(n-p)$ and $p$ pointwise.
Then there exist a Riemannian manifold $(P^p,h)$, a Riemannian submersion $\pi:(\bar{N},\bar{g})\ra (P,h)$ with connected fibers
 and an isometric
immersion $i:(P,h)\ra (N,g)$, such that $\phi=i\circ \pi$.
\end{lem}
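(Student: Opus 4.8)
The plan is to show that $\phi$ has constant rank $p$ and that its fibers form a foliation whose leaves are the fibers of a genuine submersion; then to build the quotient $P$ as the leaf space and verify that $i$ (the induced map on the quotient) is an isometric immersion. The hypothesis that the singular values are exactly $0$ (with multiplicity $n-p$) and $1$ (with multiplicity $p$) at every point is the crucial structural input: it says $\phi$ has constant rank $p$, and that on the orthogonal complement of $\ker(\phi_*)_x$ the map $(\phi_*)_x$ is a linear isometry.

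First I would exploit the constant-rank condition. Since $(\phi_*)_x$ has rank $p$ everywhere, $\ker(\phi_*)$ is a smooth distribution $\mathcal{V}$ of rank $n-p$ (the \emph{vertical} distribution), and its $\bar g$-orthogonal complement $\mathcal{H}$ (the \emph{horizontal} distribution) has rank $p$. I would next argue that $\mathcal{V}$ is integrable: the standard way is to observe that $\phi$ being constant along $\mathcal{V}$-directions forces $[\mathcal{V},\mathcal{V}]\subset\mathcal{V}$, so by Frobenius $\mathcal{V}$ integrates to a foliation $\mathcal{F}$. Here I would invoke the constant-rank theorem to get, near each $x$, local coordinates in which $\phi$ looks like a linear projection; the level sets of $\phi$ are then the leaves of $\mathcal{F}$, which are automatically embedded submanifolds locally. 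Because $\bar N$ is compact and connected, I would then take $P$ to be the leaf space $\bar N/\mathcal{F}$; the connectedness of fibers is not automatic from constant rank alone, so I would need to pass to connected components of the level sets and argue (using compactness) that the resulting quotient is a Hausdorff manifold $P^p$ carrying a natural smooth structure with $\pi:\bar N\to P$ a smooth surjective submersion with connected fibers.

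To equip $P$ with the metric $h$ making $\pi$ a Riemannian submersion and $i$ an isometric immersion \emph{simultaneously}, I would define $h$ by declaring $\pi_*|_{\mathcal{H}_x}:(\mathcal{H}_x,\bar g)\to T_{\pi(x)}P$ to be a linear isometry. For this to be well-defined I must check that the induced inner product on $T_{\pi(x)}P$ is independent of the choice of $x$ in a fiber; this is where the singular-value hypothesis does double duty. Since $(\phi_*)_x$ restricted to $\mathcal{H}_x$ is an isometry onto its image (all nonzero singular values equal $1$), the pullback $\phi^*g$ restricted to $\mathcal{H}$ equals $\bar g|_{\mathcal{H}}$; defining $i$ as the map on the quotient with $i\circ\pi=\phi$ and $h=\pi_*(\bar g|_\mathcal{H})$, one gets $i^*g=h$, i.e.\ $i$ is an isometric immersion, while $\pi$ is a Riemannian submersion by construction. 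The well-definedness of $h$ along fibers follows because $i$ is already determined and $i^*g$ gives a fiber-independent metric on $P$.

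The hard part will be the global quotient construction: passing from the local Frobenius foliation to a genuine smooth manifold structure on the leaf space $P$ with connected fibers. A priori a foliation by compact leaves can have a non-Hausdorff or singular quotient (holonomy, varying leaf topology), so I would lean on compactness of $\bar N$ and the fact that the leaves are precisely connected components of the fibers $\phi^{-1}(\phi(x))$ of a fixed smooth map into a manifold — which already gives them a coherent global description — to rule out pathologies and apply a Reeb-type stability or the constant-rank-level-set argument to conclude $\pi$ is a locally trivial fibration, hence $P=\bar N/\mathcal{F}$ is a smooth $p$-manifold. Once the smooth structure on $P$ is in hand, the verification that $\pi$ and $i$ have the claimed metric properties is the routine linear-algebra check sketched above.
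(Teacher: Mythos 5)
Your overall architecture --- constant rank gives the vertical distribution, $P$ is the set of connected components of the fibers, and $h:=i^*g$ makes $\pi$ a Riemannian submersion and $i$ an isometric immersion simultaneously --- is the same as the paper's, and your linear-algebra endgame is correct: since the singular values are $0$ and $1$, $(\phi_*)_x$ is a linear isometry on the horizontal space, so $h$ is well defined along fibers. But at the step you yourself flag as ``the hard part'' there is a genuine gap. You assert that compactness of $\bar{N}$, together with the leaves being fiber components of a fixed smooth map, lets you ``rule out pathologies and apply a Reeb-type stability'' theorem, but you never verify Reeb's hypothesis: local stability requires the compact leaf to have finite (in effect, trivial) holonomy, and compactness of the ambient manifold plus compactness of all leaves does \emph{not} by itself give a locally trivial fibration or a Hausdorff manifold leaf space --- there are foliations by compact leaves with locally unbounded leaf volume and non-Hausdorff quotient. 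To close the gap along your route you would still have to prove triviality of holonomy (for instance: a holonomy return map $\sigma$ on a small transversal $T$ satisfies $\phi\circ\sigma=\phi|_T$ because transported points stay in their leaf, and $\phi|_T$ is injective for $T$ small by the constant rank theorem, forcing $\sigma=\mathrm{id}$), and then separately establish Hausdorffness of the quotient and a uniform size for the product neighborhoods; none of this is in your sketch.

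For comparison, the paper avoids foliation machinery entirely and works metrically. It puts the Hausdorff distance $d_H$ on $P$; then, because the singular-value hypothesis makes horizontal lifts of curves length-preserving, lifting a minimizing geodesic between two fiber components from each point of the first component yields a diffeomorphism $\Phi$ between the components with $\bar{d}(\bar{x},\Phi(\bar{x}))=d_H([\bar{x}_0],[\bar{y}_0])$ for \emph{every} $\bar{x}$ --- the fibers are equidistant. Compactness then gives a uniform $\delta>0$ such that $\tilde{B}_\delta(\bar{x})\cap\phi^{-1}(\phi(\bar{x}))\subset[\bar{x}]$ and $\phi(\tilde{B}_\delta(\bar{x}))$ is an embedded $p$-dimensional submanifold, whence $i$ is injective on $\delta/2$-balls of $(P,d_H)$ and these images serve as charts; Hausdorffness is automatic since $d_H$ is a metric. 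The decisive ingredient your proposal leaves unexploited at the quotient step is exactly this equidistance of fibers coming from the metric hypothesis --- the paper uses it where you invoke stability theorems whose hypotheses are left unchecked.
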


 We save its proof to Appendix \S \ref{App1}.

\begin{lem}\label{lem3}
Given a smooth foliation  $\{K_\a:\a\in \La\}$ of $d$-dimensional submanifolds in a manifold $M^n$,
suppose $g$ and $\td{g}$ are Riemannian metrics on $M$, satisfying:
\begin{enumerate}
\item[(a)] $\exists$ constant $\mu>0$ so that $\td{g}|_{K_\a}=\mu g|_{K_\a}$ for all $\a\in \La$;
\item[(b)] For every $\a\in \La$, $p\in K_\a$, $v\in T_p M$ and $w\in T_p K_\a$,\\
 $\td{g}(v,w)=0$ if and only if $g(v,w)=0$.
\end{enumerate}
Then $K_\a$ is minimal in $(M,\td{g})$ if and only if it is minimal in $(M,g)$.
\end{lem}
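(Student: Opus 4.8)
The plan is to reduce the statement to the pointwise vanishing of the mean curvature vector and to compare the two mean curvatures head-on. Fix a leaf $K_\a$ and a point $p\in K_\a$. Hypothesis (b) says precisely that the $g$-orthogonal and the $\td g$-orthogonal complements of $T_pK_\a$ in $T_pM$ coincide; call this common normal space $\nu_p$, and note that the normal distribution of the foliation is smooth and shared by both metrics. Consequently both mean curvature vectors $\mathbf H_g$ and $\mathbf H_{\td g}$ of $K_\a$ lie in $\nu_p$, and since each metric is nondegenerate on $\nu_p$, minimality with respect to either metric is equivalent to the vanishing of $\lan\mathbf H,\xi\ran$ against every $\xi\in\nu_p$. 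I would therefore aim at the sharper assertion $\lan\mathbf H_g,\xi\ran_g=\lan\mathbf H_{\td g},\xi\ran_{\td g}$ for all normal $\xi$, which yields the desired equivalence at once.

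The main tool is the classical expression of the scalar mean curvature in a normal direction through the Lie derivative of the metric. Extending a given $\xi\in\nu_p$ to a local section $\xi$ of the (common) normal distribution, one has for vectors $X,Y$ tangent to the leaf the identity $(\mc L_\xi g)(X,Y)=g(\n^g_X\xi,Y)+g(\n^g_Y\xi,X)$; since $g(\xi,Y)\equiv0$ along the leaf, this becomes $(\mc L_\xi g)(X,Y)=-2\,g(\n^g_XY,\xi)$ on the diagonal. Hence, for a $g$-orthonormal frame $e_1,\dots,e_d$ of $T_pK_\a$,
\[
\lan\mathbf H_g,\xi\ran_g=\sum_{i=1}^d g(\n^g_{e_i}e_i,\xi)=-\tfrac12\sum_{i=1}^d(\mc L_\xi g)(e_i,e_i),
\]
and likewise $\lan\mathbf H_{\td g},\xi\ran_{\td g}=-\tfrac12\sum_i(\mc L_\xi\td g)(\td e_i,\td e_i)$ for the $\td g$-orthonormal frame $\td e_i=e_i/\sqrt\mu$, the normalization coming from (a).

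The crux is to compare $\mc L_\xi\td g$ with $\mc L_\xi g$ on tangent vectors. For $E$ tangent to the leaf I would expand $(\mc L_\xi\td g)(E,E)=\xi\big(\td g(E,E)\big)-2\,\td g([\xi,E],E)$. Condition (a) gives $\td g(E,E)=\mu\,g(E,E)$, and because $\mu$ is a \emph{constant} it passes through the derivative $\xi(\cdot)$; splitting $[\xi,E]$ into its tangent and normal parts and invoking (b) (orthogonality of $\nu$ to $TK_\a$ for \emph{both} metrics) annihilates the normal part in both $\td g([\xi,E],E)$ and $g([\xi,E],E)$. This gives $(\mc L_\xi\td g)(E,E)=\mu\,(\mc L_\xi g)(E,E)$. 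Putting $E=e_i$ and using the bilinearity $(\mc L_\xi\td g)(\td e_i,\td e_i)=\mu^{-1}(\mc L_\xi\td g)(e_i,e_i)$, the factor $\mu$ cancels and I obtain $\lan\mathbf H_{\td g},\xi\ran_{\td g}=\lan\mathbf H_g,\xi\ran_g$, completing the argument.

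The step I expect to be the main obstacle is exactly this Lie-derivative comparison. The two delicate points are that the constancy of $\mu$ is indispensable—were $\mu$ merely a pointwise factor, an extra term $(\xi\mu)\,g(E,E)$ would destroy the proportionality—and that hypothesis (b) must be used in its full two-sided form, so that the normal component of $[\xi,E]$ drops out against $E$ with respect to each of the two metrics separately. One must also take care to extend $\xi$ as a genuine section of the common normal distribution, so that the two Lie derivatives are computed against the same vector field; granting this, the remaining ingredients (the normal-variation identity and the frame rescaling) are routine.
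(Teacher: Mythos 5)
Your proof is correct and takes essentially the same route as the paper: the paper applies the Koszul formula to a leaf-orthonormal frame to get $g(\mathbf{H},\nu)=\sum_{i=1}^d g(E_i,[\nu,E_i])$ and then checks this bracket expression is unchanged for $\td{g}$ after the $\mu$-rescaling of the frame, which is precisely your identity $\lan\mathbf{H},\xi\ran=-\f{1}{2}\sum_i(\mc{L}_\xi g)(e_i,e_i)$ in Lie-derivative packaging, with constancy of $\mu$ and the two-sided form of (b) entering at the same points. No gap; the two computations are equivalent term by term.
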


\begin{proof}
Let $\n$ be the Levi-Civita connection for $g$.
Then (e.g. see \S 2.3 of \cite{dC})
\begin{equation}\label{gn}\aligned
g(\n_X Y,Z)=\f{1}{2}\Big\{
&
\n_X g(Y,Z)+\n_Y g(Z,X)-\n_Z g(X,Y)\\
&+g(Y,[Z,X])+g(Z,[X,Y])-g(X,[Y,Z])\Big\}
\endaligned
\end{equation}
for vector fields $X,Y,Z$ on $M$.

Denote by $B$ the second fundamental form of $K_\a$ in $(M,g)$ and $\mathbf{H}$ the mean vector field.
From \eqref{gn} we get
\begin{equation*}\aligned
&g(\mathbf{H},\nu)=\sum_{i=1}^d g(B(E_i,E_i),\nu)=\sum_{i=1}^d g(\n_{E_i}E_i,\nu)\\
=&\sum_{i=1}^d\f{1}{2}\Big\{2\n_{E_i}g(E_i,\nu)-\n_{\nu} g(E_i,E_i)+2g(E_i,[\nu,E_i])+g(\nu,[E_i,E_i])\Big\}\\
=&\sum_{i=1}^d g(E_i,[\nu,E_i]).
\endaligned
\end{equation*}
Here $\{E_1,\cdots,E_n\}$ is a local orthonormal frame
such that pointwise the first $d$ terms form an orthonormal basis
of leaves, and $\nu$ a vector field orthogonal to leaves.

Similarly, using symbols $\td{B}$ and $\td{\mathbf{H}}$ for $\td{g}$,
we have
\begin{equation*}
\td{g}(\td{\mathbf{H}},\nu)=\sum_{i=1}^d \mu^{-1}\td{g}(\td{B}(E_i,E_i),\nu)=\mu^{-1}\sum_{i=1}^d \td{g}(E_i,[\nu,E_i])\
=\sum_{i=1}^d g(E_i,[\nu,E_i]).
\end{equation*}
Therefore, $\mathbf{H}=0$ if and only if $\td{\mathbf{H}}=0$.
\end{proof}

\renewcommand{\proofname}{\bf Proof of Theorem \ref{g2}}
\begin{proof}
Under (i),
Condition (b) of Theorem \ref{con} implies
\begin{equation}\label{la}
\la=\sqrt{\f{n\cos^2\th}{p-n\sin^2\th}}\in \left(\sqrt{\f{n}{p}},+\infty\right).
\end{equation}
Since $\la$ varies continuously on $S^n$, both $\la$ and $p$ must be constant.
Hence (i)$\Rightarrow$(ii) and (\ref{th}) hold.

To show (ii)$\Rightarrow$(iii),
note that $f:(S^n,g_n)\ra (S^m,\la^{-2}g_m)$
has singular values $0$ and $1$ of multiplicities $(n-p)$ and $p$.
By Lemma \ref{lem2}, there exist Riemannian manifold $(P^p,h)$, Riemannian submersion $\pi:(S^n,g_n)\ra (P,h)$
with connected fibers and isometric immersion $i:(P,h)\ra (S^m,\la^{-2}g_m)$, such that $f=i\circ \pi$.
Now it
suffices to show that such $\pi$ and $i$ are harmonic.

By Condition (a) of Theorem \ref{con}, $f:(S^n,g)\ra (S^m,g_m)$ is harmonic. So is $f:(S^n,g)\ra (S^m,\la^{-2}g_m)$.
Moreover, (\ref{com2}) leads to
$$0=\tau(f)=\tau(i\circ \pi)=\tau(\pi)+\sum_{j=1}^n B(\pi_* e_j,\pi_* e_j),$$
 where $\{e_1,\cdots,e_n\}$ form an orthonormal basis
 at the considered point w.r.t. $g$
 and
 $B$ the second fundamental form of the immersed $(P,h)$ in $(S^m,\la^{-1}g_m)$.
 Observe that $\tau(\pi)$ and $\sum_{j=1}^n B(\pi_* e_j,\pi_* e_j)$ are
 tangent and normal vectors to $P$ respectively.
 Therefore, $\pi: (S^n,g)\ra (P,h)$ is harmonic, and
\begin{equation}\label{B1}
\sum_{j=1}^n B(\pi_* e_j,\pi_* e_j)=0.
\end{equation}

Assume $\la_1=\cdots=\la_p=\la$ and $\la_{p+1}=\cdots=\la_n=0$.
Choose $\{\ep_1,\cdots,\ep_n\}$ and $\{e_1,\cdots,e_n\}$ to be S-bases of
$(T_x S^n,g_n)$ and
$(T_x S^n,g)$ for $f$ accordingly.
Then 
 $\{\pi_* \ep_1,\cdots,\pi_* \ep_p\}$ give an orthonormal basis of
$(T_{\pi(x)}P,h)$ and $\pi_* \ep_i=0$ for $p+1\leq i\leq n$.
Hence $\sum\limits_{j=1}^p B(\pi_* \ep_j,\pi_* \ep_j)=0$.
By \eqref{basis3},
$i:(P,h)\ra (S^m,\la^{-2}g_m)$ is an isometric minimal immersion.

Next, we show $\pi: (S^n,g_n)\ra (P,h)$ is harmonic.
By the above,
both $\pi: (S^n,g)\ra (P,\mu^2 h)$ with $\mu:=(\cos^2\th+\sin^2\th\la^{2})^{-\f{1}{2}}$
and $\pi: (S^n,g_n)\ra (P,h)$ are Riemannian submersions.
Since $g$ and $g_n$ satisfy Conditions (a)-(b) of Lemma \ref{lem3},
together with Proposition \ref{ER} we gain the harmonicity of $\pi: (S^n,g_n)\ra (P,h)$
from that of $\pi$ w.r.t. $g$.
Thus, (ii)$\Rightarrow$(iii).

Finally, the proof of (iii)$\Rightarrow$(i) is similar to that of (ii)$\Rightarrow$(iii),
where one instead argues
that the minimality of fibers to $g_n$ coincides with that to $g$ by Lemma \ref{lem3}.
\end{proof}

\bigskip

\subsection{LOMSEs of (n,p,k)-type}
Based on Theorem \ref{g2} and the spectrum theory of Laplacian operators, we further divide LOMSEs as follows.

\begin{thm}\label{npk}
Let $f:S^n\ra S^m$ be an LOMSE
with nonzero singular value $\la$ of multiplicity $p$.
Then there exists an integer $k\geq 2$,
such that:

\begin{itemize}

\item
For $i_m\circ f(x)= \big(f_1(x),\cdots,f_{m+1}(x)\big)$  in $\R^{m+1}$,
each {component} $f_i$ is a spherical harmonic function of degree $k$.
\item
$\la=\sqrt{\f{k(k+n-1)}{p}}.$
\item $M_{f,\th}$ is an LOS 
if and only if
\begin{equation}\label{th2}
\th=\arccos\sqrt{\f{1-\f{p}{n}}{1-\f{p}{k(k+n-1)}}}.
\end{equation}
\end{itemize}
We call such $f$ an {\bf LOMSE of (n,p,k)-type}.
\end{thm}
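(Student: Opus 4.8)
The plan is to combine the structure theorem (Theorem \ref{g2}) with a Takahashi-type eigenvalue computation and the spectral theory of the round sphere. By Theorem \ref{g2}(iii) I may write $f=i\circ\pi$, where $\pi:(S^n,g_n)\to(P,h)$ is a harmonic Riemannian submersion with connected fibers and $i:(P,\la^2 h)\to(S^m,g_m)$ is an isometric minimal immersion. The target is to show that every Euclidean component of $i_m\circ f=i_m\circ i\circ\pi$ is an eigenfunction of $\De_{g_n}$ with eigenvalue $-p\la^2$, after which the integer $k$ and the value of $\la$ can be read off from the spectrum of $S^n$.

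First I would establish the eigenvalue equation on the base $P$. Repeating the computation of \eqref{x2} with the isometric minimal immersion $i$ in place of $I_{f,\th}$ — that is, applying \eqref{com2} to $i_m\circ i$ — gives $\De_{\la^2 h}(i_m\circ i)=\mathbf{H}_i-p\,(i_m\circ i)$, where the factor $p$ replaces the $n$ of \eqref{x2} because $\dim P=p$ and $i$ is isometric, so $\sum_{j=1}^p\lan i_* e_j,i_* e_j\ran=p$; minimality forces $\mathbf{H}_i=0$. This is Takahashi's theorem. Since $\De_{\la^2 h}=\la^{-2}\De_h$ for the constant conformal factor $\la^2$, each component satisfies $\De_h\,(i_m\circ i)_\a=-p\la^2\,(i_m\circ i)_\a$.

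Next I would push this equation down through $\pi$. For smooth $u:(P,h)\to\R$, the composition formula \eqref{com1} with $\bar\phi=u$ and $\phi=\pi$ reads $\De_{g_n}(u\circ\pi)=u_*(\tau(\pi))+\sum_{j=1}^n \text{Hess}_h(u)(\pi_* e_j,\pi_* e_j)$. Harmonicity of $\pi$ kills the first term, and since $\pi$ is a Riemannian submersion I may take $\{e_1,\dots,e_n\}$ adapted so that $\{\pi_* e_1,\dots,\pi_* e_p\}$ is $h$-orthonormal while $\pi_* e_{p+1}=\cdots=\pi_* e_n=0$; the sum then collapses to $\tr_h\text{Hess}_h(u)=\De_h u$, giving $\De_{g_n}(u\circ\pi)=(\De_h u)\circ\pi$. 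Taking $u=(i_m\circ i)_\a$ and combining with the previous step yields $\De_{g_n} f_\a=-p\la^2 f_\a$ for each component $f_\a$ of $f$.

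Finally I would invoke the spectrum of $(S^n,g_n)$: its eigenvalues are precisely $-k(k+n-1)$ for $k=0,1,2,\dots$, with the $k$-th eigenspace consisting of spherical harmonics of degree $k$. Hence $p\la^2=k(k+n-1)$ for some integer $k$ and each $f_\a$ is a degree-$k$ spherical harmonic, so $\la=\sqrt{k(k+n-1)/p}$; the bound $\la>\sqrt{n/p}$ from Theorem \ref{g2}(iii) excludes $k=0,1$ (which would give $\la=0$ and $\la=\sqrt{n/p}$), forcing $k\geq 2$. The angle formula \eqref{th2} then follows by substituting $\la^{-2}=p/\big(k(k+n-1)\big)$ into \eqref{th} and simplifying. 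I expect the one genuinely delicate point to be the descent identity $\De_{g_n}(u\circ\pi)=(\De_h u)\circ\pi$, which relies on marrying the harmonicity of $\pi$ with the adapted-frame computation for a Riemannian submersion; the remaining steps are essentially bookkeeping with the composition formulas.
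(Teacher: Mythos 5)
Your proposal is correct and follows essentially the same route as the paper's proof: the paper likewise factors $f=i\circ\pi$ via Theorem \ref{g2}, obtains $\De_h$ of the components of $i_m\circ i$ equal to $-\la^2 p$ times themselves (phrasing your Takahashi-plus-constant-rescaling step as the composition $i_m\circ i\circ\mathbf{Id}$ with the totally geodesic identity $\mathbf{Id}:(P,h)\ra(P,\la^2 h)$, which is the same computation), and then descends through the harmonic submersion with exactly your adapted-frame identity $\De_{g_n}(u\circ\pi)=(\De_h u)\circ\pi$. The concluding appeal to the sphere spectrum, the exclusion of $k=0,1$ via $\la>\sqrt{n/p}$, and the substitution into \eqref{th} to get \eqref{th2} all match the paper verbatim.
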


\renewcommand{\proofname}{\it Proof.}
\begin{proof}
By Theorem \ref{g2},
we have
$(P^p,h)$, $\pi:S^n\ra P$ and $i:P\ra S^m$, such that $f=i\circ \pi$, $\pi:(S^n,g_n)\ra (P,h)$
a harmonic Riemannian submersion and $i:(P,h)\ra (S^m,\la^{-2}g_m)$ an isometric minimal immersion.

Denote by $\mathbf Y(y)$ the position vector of $i(y)$ in $\R^{m+1}$ for $y\in P$.
Then $\mathbf Y=i_m\circ i\circ \mathbf{Id}$ where
$\mathbf{Id}:(P,h)\rightarrow(P,\la^2 h)$ is the identity map.
Since  $i:(P,\la^2h)\ra (S^m,g_m)$ is an isometric minimal immersion and $\mathbf{Id}$ totally geodesic,
we gain $\tau(i\circ \mathbf{Id})=0$ and thereby via
\eqref{com2} obtain
\begin{equation}\label{com3}\aligned
&\De_h(\mathbf Y)=\tau(\mathbf Y)=\tau(i_m\circ i\circ \mathbf{Id})
=\tau(i\circ \mathbf{Id})+\sum_{j=1}^p B_m\left((i\circ \mathbf{Id})_*e_j,(i\circ \mathbf{Id})_*e_j\right)\\
=&-\left(\sum_{j=1}^p \lan (i\circ \mathbf{Id})_*e_j,(i\circ \mathbf{Id})_*e_j\ran\right)\mathbf Y=-\left(\sum_{j=1}^p \la^2 h(e_j,e_j)\right)\mathbf Y
=-\la^2 p\cdot\mathbf Y
\endaligned
\end{equation}
where $\{e_1,\cdots,e_p\}$ form an orthonormal basis of $(T_y P,h)$.
For $\big(h_1(y),\cdots,h_{m+1}(y)\big):=\mathbf{Y}(y)$,
\eqref{com3} states precisely
\begin{equation}\label{lap_h}
\De_h(h_j)=-\la^2 p\cdot h_j,\ \ \ \ \text{for } 1\leq j\leq m+1.
\end{equation}

Coupling \eqref{com1} with \eqref{lap_h}, we get
\begin{equation}\aligned
\De_{g_n}(h_j\circ \pi)&=\tau(h_j\circ \pi)=(h_j)_*(\tau(\pi))+\sum_{j=1}^n B_{\pi_* \ep_j,\pi_* \ep_j}(h_j)\\
&=\sum_{j=1}^n \text{Hess}_h(h_j)(\pi_* \ep_j,\pi_* \ep_j)=\De_h(h_j)\circ \pi\\
&=-\la^2 p(h_j\circ \pi),
\endaligned
\end{equation}
where $\{\ep_1,\cdots,\ep_n\}$ form an orthonormal basis of $(T_x S^n,g_n)$
such that
 $\{\pi_* \ep_1,\cdots,\pi_* \ep_p\}$ be an orthonormal basis
of $(T_{\pi(x)}P,h)$ and $\pi_* \ep_{p+1}=\cdots=\pi_* \ep_n=0$.
In other words,
\begin{equation}
\De_{g_n}f_j=-\la^2 p\cdot f_j\qquad \forall 1\leq j\leq m+1.
\end{equation}

The theory of eigenvalues of Laplacian operators on Euclidean spheres
confirms the existence of positive integer $k$ so that
every $f_j$ is a spherical harmonic function of degree $k$ (see \S II.4 of \cite{c})
and $\la^2 p=k(k+n-1)$, i.e.,
\begin{equation}\label{sing1}
\la=\sqrt{\f{k(k+n-1)}{p}}.
\end{equation}
Moreover, $\la>\sqrt{\f{n}{p}}$ forces $k\geq 2$.
Finally, \eqref{sing1} and \eqref{th} give (\ref{th2}).
\end{proof}

Based on Theorem \ref{npk}, several geometric quantities of LOSs or LOCs for LOMSEs of $(n,p,k)$-type
can be expressed explicitly. See Appendix \S \ref{App2} for details.

\begin{cor}\label{cor2}
Let $f$ be an LOMSE of $(n,p,k)$-type, $M_{f,\th}$ and $C_{f,\th}$ the corresponding LOS and LOC. Then
\begin{enumerate}
\item[(A)] All normal planes of $M_{f,\th}$ have a constant acute angle $\a_{n,p,k}$ to a preferred reference plane $Q_0$ (see \S\ref{App2}), with
\begin{equation}\label{angle}
\cos\a_{n,p,k}=\sqrt{\f{1-\f{p}{n}}{1-\f{p}{k(k+n-1)}}}\cdot\left(\f{n-p}{k(k+n-1)-p}\right)^{\f{p}{2}}.
\end{equation}
\item[(B)] The volume of $M_{f,\th}$ is
\begin{equation}\label{volume}
V_{n,p,k}=\left(\f{k(k+n-1)}{n}\right)^{\f{p}{2}}\left(\f{1-\f{p}{n}}{1-\f{p}{k(k+n-1)}}\right)^{\f{n-p}{2}}\om_n,
\end{equation}
where $\om_n$ is the volume of $n$-dimensional unit Euclidean sphere.
\item[(C)] $C_{f,\th}$ is an entire minimal graph with constant Jordan angles relative to $Q_0$.
The Jordan angles of
$C_{f,\th}$ are given by
\begin{equation}\label{JA}
\arccos\sqrt{\f{n-p}{k(k+n-1)-p}},\quad\arccos\sqrt{\f{1-\f{p}{n}}{1-\f{p}{k(k+n-1)}}},\quad 0,
\end{equation}
of multiplicities $p,1,n-p$ respectively. The slope function of $C_{f,\th}$ is identically equal to $W_{n,p,k}:=\sec \a_{n,p,k}$.
\end{enumerate}

\end{cor}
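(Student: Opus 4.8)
The plan is to reduce all three assertions to linear algebra at a single point of $S^n$ via the S-basis from the proof of Theorem~\ref{con}, and then to substitute the explicit data $\la^2=\f{k(k+n-1)}{p}$ and $\cos^2\th=\f{1-p/n}{1-p/(k(k+n-1))}$ furnished by Theorem~\ref{npk}. Abbreviating $K:=k(k+n-1)$, a short preliminary computation records the three identities
\[
\cos^2\th=\f{K(n-p)}{n(K-p)},\qquad \cos^2\th+\sin^2\th\,\la^2=\f{K}{n},\qquad 1+\tan^2\th\,\la^2=\f{K-p}{n-p}.
\]
These are the computational heart; granting them, the stated formulas emerge by elementary trigonometry.

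For (B), I fix $x\in S^n$ and take the $g_n$-orthonormal S-basis $\{\ep_1,\cdots,\ep_n\}$ with $\la_1=\cdots=\la_p=\la$ and $\la_{p+1}=\cdots=\la_n=0$. As in the proof of Theorem~\ref{con} the induced metric obeys $g(\ep_i,\ep_j)=(\cos^2\th+\sin^2\th\,\la_i^2)\,\de_{ij}$, so the area density of $M_{f,\th}$ relative to $g_n$ is the \emph{constant} $\sqrt{\det g}=(\cos^2\th+\sin^2\th\,\la^2)^{p/2}(\cos\th)^{n-p}$. Integrating over $(S^n,g_n)$ gives $V_{n,p,k}$ as this constant times $\om_n$, and the first two boxed identities turn it into \eqref{volume}.

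For (C), I differentiate the graphing map $F_{f,\th}(y)=\tan\th\,|y|\,f(\f{y}{|y|})$ at $y=r\,x\neq 0$. Splitting $\R^{n+1}$ into the radial line and $T_xS^n$, a direct computation shows $dF_{f,\th}$ sends $x\mapsto\tan\th\,f(x)$ (a multiple of the position vector of $f(x)$, hence normal to $T_{f(x)}S^m$) and $v\mapsto\tan\th\,f_*v$ for $v\in T_xS^n$ (lying in $T_{f(x)}S^m$). As these image blocks are orthogonal, the singular values of $dF_{f,\th}$ are $\tan\th$ (radial, multiplicity $1$), $\tan\th\,\la$ (multiplicity $p$), and $0$ (multiplicity $n-p$). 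The Jordan angles over $Q_0$ are the arctangents of these, and $1+\tan^2\th=\sec^2\th$ together with the third boxed identity reproduce precisely the three angles of \eqref{JA} with multiplicities $p,1,n-p$; they are manifestly independent of $y$. Here $C_{f,\th}$ is entire since $F_{f,\th}$ is defined on all of $\R^{n+1}$ and smooth off the vertex, and it is minimal because a cone over the minimal LOS $M_{f,\th}\subset S^{n+m+1}$ is minimal in Euclidean space. Finally the slope function is $W=\sqrt{\det\!\big(\mathrm{Id}+(dF_{f,\th})^{\!\top}dF_{f,\th}\big)}=\prod_i\sqrt{1+\sigma_i^2}=\sec\th\,\big(\f{K-p}{n-p}\big)^{p/2}$.

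For (A), using the same S-basis I exhibit an orthonormal normal frame of $M_{f,\th}$ in $S^{n+m+1}$: the vector $(\sin\th\,x,-\cos\th\,f(x))$, the $p$ vectors proportional to $(\sin\th\,\la\,\ep_j,-\cos\th\,f_*\ep_j/\la)$, and the $m-p$ vectors $(0,w)$ with $w$ running over the orthogonal complement of the image of $(f_*)_x$ in $T_{f(x)}S^m$. Measuring the angle between this normal plane and the reference plane $Q_0$ of \S\ref{App2} through the inner product of unit decomposable multivectors, I would argue by the tangent--normal duality that identifies the normal plane of $M_{f,\th}$ in $S^{n+m+1}$ with the normal plane of the cone $C_{f,\th}$ in $\R^{n+m+2}$: passing to orthogonal complements equates $\cos\a_{n,p,k}$ with the product of cosines of the cone's Jordan angles from (C), namely $\cos\th\,\big(\f{n-p}{K-p}\big)^{p/2}$, which is \eqref{angle}; comparison with $W$ then yields $\sec\a_{n,p,k}=W_{n,p,k}$. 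Point-independence of the S-basis description gives constancy over $M_{f,\th}$. The one genuinely delicate step is this part (A): one must fix $Q_0$ and the precise meaning of ``angle between a normal plane and $Q_0$'' in \S\ref{App2}, and check that the direct product-of-principal-cosines and the duality route agree. Everything else is the routine substitution of the three boxed identities.
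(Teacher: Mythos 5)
Your proposal is correct and follows essentially the paper's own route: the paper likewise works pointwise in the S-basis, obtains (B) by integrating the volume density $\prod_j\sqrt{\cos^2\th+\sin^2\th\,\la_j^2}$, reads the Jordan angles off the directions $\mathbf{X},E_1,\cdots,E_n$ (equivalent to your singular values $\tan\th$, $\tan\th\,\la$, $0$ of $dF_{f,\th}$), and establishes (A) by exactly your tangent--normal duality, implemented as the Hodge-star identity $\lan \nu_1\w\cdots\w\nu_{m+1},\ep_{n+2}\w\cdots\w\ep_{n+m+2}\ran=\lan \mathbf{X}\w E_1\w\cdots\w E_n,\mathbf{Y}_1\w\ep_1\w\cdots\w\ep_n\ran$ with $Q_0=\{x_1=\cdots=x_{n+1}=0\}$, whose determinant evaluates to the product of principal cosines $\cos\th\cdot\big(\f{n-p}{k(k+n-1)-p}\big)^{p/2}$. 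So the step you flag as delicate in (A) is precisely the paper's \S\ref{App2} computation, and no gap remains.
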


\textbf{Remarks.}
\begin{itemize}
\item
The three original LOMs are LOMSEs of $(2m-1,2m,2)$-type for $m=2,4,8$.
Exact values of acute angles for them were provided in \cite{l-o}.
\item 
E. Calabi \cite{ca} proved that the area of
any minimal $S^2$ in spheres has to be an integer multiple of $2\pi$.
In the case of higher dimension,
the gap phenomenon between
volume of totally geodesic spheres and those of other minimal submanifolds in spheres was discovered by Cheng-Li-Yau \cite{c-l-y}.
(B) says that volumes of our examples are discrete.
It should be interesting to study whether volumes of all compact minimal submanifolds take values discretely.
\item
(C) tells that, although
LOCs derived from LOMSEs
are all of constant Jordan angles and are uncountably many
(cf. Theorem \ref{npk2} and the remarks),
the angles take values in a discrete set.
This gives a partial affirmative answer to Problem 1.1 in \cite{j-x-y5}.
\end{itemize}


Let $f_1:S^{n}\ra S^{m_1}$, $f_2:S^{n}\ra S^{m_2}$ be nontrivial LOMs and $m_1\leq m_2$. If
 there exist isometry $\chi:(S^n,g_n)\ra (S^n,g_n)$ and totally geodesic isometric embedding $\psi:(S^{m_1},g_{m_1})\ra (S^{m_2},g_{m_2})$,
 such that the following diagram commutes
$$\CD
 S^n @>\chi>> S^n  \\
 @Vf_1VV     @VVf_2 V \\
 S^{m_1}  @>\psi>> S^{m_2}
\endCD$$
then $f_1$ and $f_2$ are said to be \textit{equivalent}.
By the virtue of structure theorems on Riemannian submersions from Euclidean spheres and minimal immersions into Euclidean spheres,
we obtain a classification of LOMSEs.

\begin{thm}\label{npk2}
Let $\mathcal{F}_{n,p,k}$ be the set of all equivalence classes of $(n,p,k)$-type LOMSEs.
Then $\mathcal{F}_{n,p,k}$ is nonempty if and only if $k$ is a positive even integer and $(n,p)=(15,8)$, $(2l+1,2l)$ or $(4l+3,4l)$ for some positive integer $l$.
Moreover,

\begin{itemize}

\item If $(n,p)=(2l+1,2l)$, there exists a $1:1$ correspondence between $\mathcal{F}_{2l+1,2l,k}$ and the set of equivalence classes of full isometric minimal immersions (see \cite{c-w} for definitions of `equivalence' and `full')
of $(\mathbb{CP}^{l},\frac{k(k+2l)}{2l}g_{FS})$ into unit Euclidean spheres
where $g_{FS}$ means the Fubini-Study metric.

\item If $(n,p)=(4l+3,4l)$, there exists a $1:1$ correspondence between $\mathcal{F}_{4l+1,4l,k}$ and the set of equivalence classes of full isometric minimal immersions
of $(\mathbb{HP}^{l},\frac{k(k+4l+2)}{4l}g_{ST})$ into unit Euclidean spheres
where  $g_{ST}$  is the standard metric on $\mathbb{HP}^l$ (see \S 3.2 of \cite{b-f-l-p-p} for details).

\item If $(n,p)=(15,8)$, there exists a $1:1$ correspondence between $\mathcal{F}_{15,8,k}$ and the set of equivalence classes of full isometric minimal immersions of
$(S^{8},\frac{k(k+14)}{32}g_{8})$ into unit Euclidean spheres.
\end{itemize}
\end{thm}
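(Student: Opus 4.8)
The plan is to combine the structural decomposition already established with the classification of metric fibrations of round spheres, and then to reduce the whole problem to a question about isometric minimal immersions of the base. First I would invoke Theorem~\ref{g2}(iii) together with Theorem~\ref{npk}: any $(n,p,k)$-type LOMSE factors as $f=i\circ\pi$, where $\pi:(S^n,g_n)\ra(P^p,h)$ is a harmonic Riemannian submersion with connected fibers and $i:(P,\la^2 h)\ra(S^m,g_m)$ is an isometric minimal immersion, with $\la=\sqrt{k(k+n-1)/p}$. Since a nontrivial LOMSE cannot have all its singular values equal by Proposition~\ref{g1}, it carries genuine zero singular values, so $p<n$ and $\pi$ is nontrivial. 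By Proposition~\ref{ER} the fibers of $\pi$ are minimal, hence $\pi$ is a genuine Riemannian submersion of the round sphere with connected minimal fibers. I would then apply the classification of such submersions due to Gromoll--Grove and Wilking \cite{br,g-g,w}: up to metric congruence $\pi$ must be one of the Hopf fibrations $S^{2l+1}\ra\mathbb{CP}^l$, $S^{4l+3}\ra\mathbb{HP}^l$, or $S^{15}\ra\mathbb{OP}^1=S^8$, with totally geodesic fibers $S^1$, $S^3$, $S^7$. This pins down $(n,p)\in\{(2l+1,2l),(4l+3,4l),(15,8)\}$, the base $P$, and the base metric $h$ (namely $g_{FS}$, $g_{ST}$, and $\tfrac14 g_8$ in the three cases), each uniquely up to isometry.

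Next I would extract the parity constraint and reorganize the data as an eigenvalue problem on $(P,h)$. Pull-back along $\pi$ identifies $C^\infty(P)$ with the fiber-invariant functions on $S^n$ and intertwines $\De_h$ with $\De_{g_n}$; by \eqref{lap_h} the components of $i$ are $\De_h$-eigenfunctions of eigenvalue $\la^2 p=k(k+n-1)$, so their lifts are degree-$k$ spherical harmonics on $S^n$ that are invariant under the fiber action (the $U(1)$, $Sp(1)$, or octonionic action). A branching computation of this invariant subspace shows it is nonzero precisely when $k$ is even; equivalently $k(k+n-1)$ is a Laplace eigenvalue of $(P,h)$ if and only if $k$ is even, and for $k=2q$ it is the $q$-th eigenvalue of the symmetric space. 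Through Takahashi's theorem, an isometric minimal immersion of $(P,\la^2 h)$ into a unit sphere is exactly a map whose components lie in this eigenspace and which is isometric up to the prescribed scaling, which forces the metrics $\tfrac{k(k+2l)}{2l}g_{FS}$, $\tfrac{k(k+4l+2)}{4l}g_{ST}$, and $\tfrac{k(k+14)}{32}g_8$ recorded in the statement.

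Finally I would set up the bijection. Given any full isometric minimal immersion $i$ of the appropriate scaled symmetric space into a unit sphere, the converse direction (iii)$\Rightarrow$(i) of Theorem~\ref{g2} together with Theorem~\ref{npk} shows $f:=i\circ\pi$ is a nontrivial $(n,p,k)$-type LOMSE, and fullness fixes the target dimension $m$ canonically. Conversely the decomposition of the first paragraph recovers $i$ from $f$; here the rigidity of the Hopf fibration (its uniqueness up to isometry) guarantees that an equivalence of LOMSEs, given by $\chi$ on the domain sphere and $\psi$ on the target sphere, descends to an isometry of $P$ preserving the fibration and hence matches an equivalence of the associated minimal immersions. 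This yields the stated $1:1$ correspondences and, in particular, nonemptiness of $\mathcal{F}_{n,p,k}$ exactly for even $k$ in the three families, since the standard (first) minimal immersion realizes each such $k$.

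The hardest part will be the second step: verifying the even-$k$ parity by a careful descent/branching argument for spherical harmonics across all three Hopf actions, and tracking the three distinct metric normalizations so that the eigenvalue $k(k+n-1)$ on $(S^n,g_n)$ lands on the correct eigenvalue of $(P,h)$. The submersion classification of the first step is deep but may be cited verbatim; the remaining delicate point is checking that the two equivalence relations genuinely coincide, which again rests on the uniqueness-up-to-isometry of the Hopf fibration supplied by that same classification.
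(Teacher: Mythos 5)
Your proposal is correct and follows essentially the same route as the paper: the Theorem \ref{g2} factorization $f=i\circ\pi$, Wilking's classification of Riemannian submersions of round spheres to pin down $(n,p)$ and the base metrics, the eigenvalue bookkeeping $\la^2 p=k(k+n-1)$ from Theorem \ref{npk} to identify the scaled metrics, and Takahashi's theorem applied to the standard minimal immersion for nonemptiness. The only notable difference is minor: where you propose a branching computation for the even-$k$ parity, the paper gets necessity in one line from $\pi(x)=\pi(-x)$ (the Hopf fibers contain antipodal pairs, so degree-$k$ spherical harmonics constant on fibers force $k$ even) and cites Berger--Gauduchon--Mazet for the invariant eigenspace facts you would otherwise have to verify by hand.
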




\begin{proof}
 By Theorem \ref{g2}, an LOMSE $f=i\circ \pi$ where
 $\pi:(S^n,g_n)\longrightarrow (P,h)$ is a harmonic Riemannian submersion with connected fibers and
 $i:(P,\lambda^2h)\longrightarrow(S^m,g_m)$ an isometric minimal immersion.
 By Wilking's classification theorem \cite{w},
 $\pi$ is among Hopf fibrations:
$(S^{2l+1},g_{2l+1})\longrightarrow (\mathbb{CP}^l,g_{FS})$,
$(S^{4l+3},g_{4l+3})\longrightarrow (\mathbb{HP}^l,g_{ST})$ and
$(S^{15},g_{15})\longrightarrow(S^8,\frac{1}{4}g_8)$.
Therefore,
the set of all equivalence classes of $(n,p,k)$-type LOMSEs $1:1$ corresponds to the set of equivalence classes of full isometric minimal immersions
from $(\mathbb{CP}^l,\frac{k(k+2l)}{2l}g_{FS})$ (when $(n,p)=(2l+1,2l)$),
$(\mathbb{HP}^l,\frac{k(k+4l+2)}{4l}g_{ST})$ (when $(n,p)=(4l+3,4l)$) or $(S^8,\frac{k(k+14)}{32}g_{8})$ (when $(n,p)=(15,8)$), into unit Euclidean spheres. 
By Theorem \ref{npk},
 coordinate functions of $f$ in $\R^{m+1}$ are all spherical harmonic polynomials of degree $k$.
Since
$\pi(x)=\pi(-x)$ for $x\in S^n$, $k$ has to be even for nonempty $\mathcal{F}_{n,p,k}$.

Given $(n,p)=(2l+1,2l)$ and $k=2\kappa$ where $l,\k\in \Bbb{Z}^+$,
we need to explain $\mathcal{F}_{n,p,k}$ is nonempty.
Similar argument holds for other cases.
Let $V_{\kappa}$ be the eigenspace of the Laplace-Beltrami operator of $(\mathbb{CP}^l,\frac{k(k+2l)}{2l}g_{FS})$ corresponding to the $\kappa$-th eigenvalue. It is known (see e.g. \S III.C of \cite{b-g-m}) that $V_\kappa$ is nonempty
and elements in $V_\kappa$ are $S^1$-invariant spherical polynomials of degree $2\kappa$ on $S^{2l+1}$.
Choosing an orthonormal basis $\{f_1,\ldots, f_{m+1}\}$ of $V_\kappa$ w.r.t. the $L^2$-inner product of a normalized measure defined in \cite{c-w,u},
then from Takahashi's Theorem \cite{ta}
 we know that the isometric immersion $i:(\mathbb{CP}^l,\frac{k(k+2l)}{2l}g_{FS})\longrightarrow S^m$ by $x\mapsto (f_1(x),\ldots, f_{m+1}(x))$ is minimal.
This is called the \textit{standard minimal immersion} in \cite{c-w,u}.
Combining Theorems \ref{g2} and \ref{npk} implies that $f:=i\circ \pi$ is a LOMSE of $(2l+1,2l,k)$-type.
Such an LOMSE will be called a \textbf{standard
LOMSE} in the sequel. This completes the proof.
\end{proof}

{\bf Remarks.}
\begin{itemize}
\item
In \cite{x-y-z}
we give explicit expressions for standard LOMSEs of $(2l+1,2l,2)$, $(4l+3,4l,2)$-type.
By rigidity results of E. Calabi \cite{ca}, do Carmo-Wallach \cite{c-w}, N. Wallach \cite{wa}, K. Mashimo \cite{ma1,ma2}
and Ohnita \cite{oh},
our construction exhausts all LOMSEs of $(2l+1,2l,2)$, $(4l+3,4l,2)$-type.
We also show that all LOCs corresponding to $k=2$ are area-minimizing therein.

\item
By Theorem \ref{npk2} and structure properties of minimal immersions
from symmetric spaces into spheres due to do Carmo-Wallach \cite{c-w}, Wallach \cite{wa} and Urakawa \cite{u},
$\mc{F}_{n,p,k}$ is smoothly parameterized by a convex body $L$ in a vector space $W_2$.
By do Carmo-Wallach \cite{c-w}, G. Toth \cite{to} and H. Urakawa  \cite{u},
 $\dim W_2\geq 18$ for $(n,p)=(7,4)$ or $(15,8)$ and $k\geq 8$;
 $\dim W_2\geq 91$ for $(n,p)=(2l+1,2l)$, $l\geq 2$, $k\geq 8$ and $\dim W_2\geq 29007$ for $(n,p)=(11,8)$, $k\geq 8$.

\end{itemize}

\bigskip\bigskip

\Section{On Dirichlet problems related to LOMSEs}{On Dirichlet problems related to LOMSEs}

\subsection{Necessary and sufficient conditions for minimal graphs}

Given smooth $f:S^n\ra S^m$ and smooth $\rho:U\subset (0,\infty)\ra \R$,
we shall study
when submanifold $M_{f,\rho}$ in $\R^{n+m+2}$ of form \eqref{frho1}
 is minimal.

Let $g$ be the induced metric on $M_{f,\rho}$
and $h_r:=I_r^*g$ for $r\in U$ where
$I_r:S^n\ra M_{f,\rho}$
\begin{equation}
x\mapsto (rx,\rho(r)f(x)).
\end{equation}
By $r$ and $\rho$
we mean smooth functions $(rx,\rho(r)f(x))\mapsto r$ and $(rx,\rho(r)f(x))\mapsto \rho(r)$.
From now on
we use the symbol $\n$ for the Levi-Civita connection on $(M_{f,\rho},g)$.
Obviously 
$\n_v r=\n_v \rho=0$ for any $v\in TI_{r_0}(S^n)$.

We derive following characterization for minimality of $M_{f,\rho}$ in terms of $r$ and $\rho$.

\begin{thm}\label{min_graph1}
Assume the above function $\rho>0$.
Then $(M_{f,\rho},g)$ is minimal in $\R^{n+m+2}$ if and only if the following two conditions hold:
\begin{enumerate}
\item[(a)]
For each $r\in U$,
$f:(S^n,h_r)\ra (S^m,g_m)$ is harmonic.
\item[(b)] For each $r\in U$,
$\De_g \rho-2\rho\cdot e(f)=0$ pointwise in $I_r(S^n)$ where $e(f)$ is the energy density of $f:(S^n,h_r)\ra (S^m,g_m)$.
\end{enumerate}

Moreover, Condition (b) has an equivalent description in terms of
singular values $\la_1,\cdots,\la_n$ of $(f_*)_x: (T_x S^n,g_n)\ra
(T_{f(x)}S^m,g_m)$, and that is
\begin{equation}\label{ODE}
\f{\rho_{rr}}{1+\rho_r^2}+\sum_{i=1}^n \f{\f{\rho_r}{r}-\f{\la_i^2 \rho}{r^2}}{1+\f{\la_i^2\rho^2}{r^2}}=0.
\end{equation}

\end{thm}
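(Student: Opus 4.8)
The plan is to compute the mean curvature field $\mathbf H$ of $M_{f,\rho}$ in $\R^{n+m+2}$ and decide when it vanishes. First I would record the induced metric. Writing the parametrization as $\Phi(x,r)=(rx,\rho(r)f(x))$, we have $\partial_r\Phi=(x,\rho_r f)$ and $\Phi_*v=(rv,\rho f_*v)$ for $v\in T_xS^n$; since $\lan x,v\ran=\lan f(x),f_*v\ran=0$ and $|x|=|f|=1$, the radial and slice directions are $g$-orthogonal, whence
\[
g=(1+\rho_r^2)\,dr^2+h_r,\qquad h_r=r^2g_n+\rho^2f^*g_m.
\]
Thus $e_0:=W^{-1}\partial_r$ with $W:=\sqrt{1+\rho_r^2}$, together with any $h_r$-orthonormal frame tangent to $I_r(S^n)$, is a $g$-orthonormal frame of $M_{f,\rho}$, and $r,\rho$ are constant on slices (consistent with $\n_v r=\n_v\rho=0$).

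Next I would use that, the ambient space being flat, the analogue of \eqref{x2} is simply $\De_g\mathbf X=\tau(\mathbf X)=\mathbf H$ (no $-n\mathbf X$ term) for the position map $\mathbf X=(\mathbf X_1,\mathbf X_2)$ with $\mathbf X_1=rx$, $\mathbf X_2=\rho f(x)$, and I would split $\De_g=\De_{h_r}+R$, where $R[\phi]=W^{-2}\phi_{rr}+A\,\phi_r$ is the radial operator with $A=\f{\partial_r J}{W^2 J}-\f{\partial_r W}{W^3}$ and $J:=\sqrt{\det h_r}$. Freezing $r$ on a slice, the computation in \eqref{y1}--\eqref{y2} carried out for the slice metric $h_r$ gives
\[
\De_{h_r}\mathbf X_1=r\big(\tau(\mathbf{Id})-2e(\mathbf{Id})\,x\big),\qquad \De_{h_r}\mathbf X_2=\rho\big(\tau(f)-2e(f)\,f\big),
\]
with $\mathbf{Id}:(S^n,h_r)\ra(S^n,g_n)$ and $f:(S^n,h_r)\ra(S^m,g_m)$, whereas $R[\mathbf X_1]=A\,x$ and $R[\mathbf X_2]=(W^{-2}\rho_{rr}+A\rho_r)\,f$. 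Collecting blocks,
\[
\mathbf H_1=r\,\tau(\mathbf{Id})+\tilde A\,x,\qquad \mathbf H_2=\rho\,\tau(f)+\tilde B\,f,
\]
where $\tilde A:=A-2r\,e(\mathbf{Id})$ and $\tilde B:=\De_g\rho-2\rho\,e(f)$, using $\De_g\rho=W^{-2}\rho_{rr}+A\rho_r$. Thus the tangential and radial parts of $\mathbf H_2$ are exactly $\rho\,\tau(f)$ and $\tilde B\,f$, matching Conditions (a) and (b).

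The crux is that $\mathbf H$ is automatically normal to $TM_{f,\rho}$, which slaves the first block to the second. Pairing $\mathbf H$ with $\Phi_*\ep_j=(r\ep_j,\rho f_*\ep_j)$ and with $e_0$, and using $\lan x,\ep_j\ran=\lan f,f_*\ep_j\ran=0$, produces the identities
\[
r^2\lan\tau(\mathbf{Id}),\ep_j\ran+\rho^2\lan\tau(f),f_*\ep_j\ran=0,\qquad \tilde A+\rho_r\,\tilde B=0.
\]
Therefore $\tau(f)=0$ forces $\tau(\mathbf{Id})=0$ (as $\{\ep_j\}$ spans $T_xS^n$ and $\tau(\mathbf{Id})\in T_xS^n$), and $\tilde B=0$ forces $\tilde A=0$. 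Consequently $\mathbf H=0\Leftrightarrow\mathbf H_2=0\Leftrightarrow$ (a) and (b) hold: the forward direction reads off $\tau(f)=0$ and $\tilde B=0$ from $\mathbf H_2=0$, and the converse recovers $\mathbf H_1=0$ from the two orthogonality identities. This is the asserted equivalence.

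Finally, for the ODE \eqref{ODE} I would make $\tilde B=0$ explicit through an S-basis $\{\ep_i\}$ of $(T_xS^n,g_n)$ diagonalizing $f^*g_m$, so that $h_r=\mathrm{diag}(r^2+\rho^2\la_i^2)$, giving $2e(f)=\sum_i\la_i^2/(r^2+\rho^2\la_i^2)$ and $\partial_r\log J=\sum_i(r+\rho\rho_r\la_i^2)/(r^2+\rho^2\la_i^2)$. Substituting these into $\tilde B=W^{-2}\rho_{rr}+A\rho_r-2\rho e(f)=0$, merging the two $\rho_{rr}$ terms via $W^2-\rho_r^2=1$, and clearing the overall factor $1+\rho_r^2$ collapses the sum to $\sum_i(r\rho_r-\la_i^2\rho)/(r^2+\la_i^2\rho^2)$, i.e. precisely the sum appearing in \eqref{ODE}. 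The main obstacle is the bookkeeping: correctly isolating the radial operator $R$ (equivalently the coefficient $A$) from the block-diagonal metric and carrying the $\la_i$-dependent slice geometry through to this algebraic cancellation; once $\mathbf H$ is split into its two Euclidean blocks, the structural equivalence itself is forced by the normality of $\mathbf H$.
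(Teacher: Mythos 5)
Your proposal is correct and takes essentially the same route as the paper: both compute $\mathbf{H}=\De_g\mathbf{X}$ blockwise for the position map, read Conditions (a)--(b) off the second block via the slice identity $\De_{h_r}\mathbf{Y}_2=\tau(f)-2e(f)\mathbf{Y}_2$, recover vanishing of the first block from the normality of $\mathbf{H}$ to $TM_{f,\rho}$, and reduce (b) to \eqref{ODE} by diagonalizing $h_r$ in an S-basis. The only difference is bookkeeping: the paper evaluates $\De_g\rho$ invariantly via $\De_g\rho=\rho_r\De_g r+\rho_{rr}|\text{grad}_g r|^2$ together with the Hessian computations \eqref{hess1}--\eqref{hess2}, whereas you use the divergence-form radial splitting $\De_g=\De_{h_r}+W^{-2}\partial_r^2+A\,\partial_r$ with $A$ involving $\partial_r\log\sqrt{\det h_r}$ --- both produce the paper's formula \eqref{La3} and the same algebraic collapse to \eqref{ODE}.
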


\begin{proof}
Define $\mathbf{X}:U\times S^n\ra \R^{n+m+2}$ by
$(r,x)\mapsto (r\mathbf{Y}_1(x),\rho(r)\mathbf{Y}_2(x))$
where $\mathbf Y_1(x)\in \R^{n+1}$ and $\mathbf Y_2(x)\in \R^{m+1}$ are position vectors
of $x$ and $f(x)$ respectively.
Then $\mathbf{X}$ is the position function of $M_{f,\rho}$. The tangent plane at $\mathbf{X}(r,x)$ is spanned
 by
$$\pr:=\left(\mathbf{Y}_1(x),\rho_r \mathbf{Y}_2(x)\right)
\text{\ \ \ and \ \ \ }
E_i:=\left(r\ep_i,\rho(r)f_* \ep_i\right)$$
determined by a basis $\{\ep_1,\cdots,\ep_n\}$ of $T_x S^n$.
Easy to see
\begin{equation}
\lan \pr,E_i\ran=0
\text{\ \ \ and \ \ \ }
\lan \pr,\pr\ran=1+\rho_r^2.
\end{equation}

For the mean curvature vector field on $M_{f,\rho}$
\begin{equation}\label{MeanCur}
\mb{H}=\De_g \mb{X}=\big(\De_g(r\mb{Y}_1),\De_g(\rho(r)\mb{Y}_2)\big),
\end{equation}
let us do calculations on its second component.
As $\mathbf{Y}_2(x)$ is 
independent of $r$
on $(M_{f,\rho},g)$,
we get
$$
\Hess\mathbf{Y}_2(\pr,\pr)=\n_{\pr}\n_{\pr}\mathbf{Y}_2-(\n_{\pr}\pr)\mathbf{Y}_2=-(\n_{\pr}\pr)\mathbf{Y}_2.$$
Since
$
\lan \n_{\pr}\pr,E_i\ran=\lan\n_{\pr}(\mathbf{Y}_1(x),\rho_r\mathbf{Y}_2(x)), E_i\ran
=\big\lan (0,\rho_{rr} \mathbf{Y}_2(x)),(r\ep_i,\rho(r) f_* \ep_i)\ran=0
$
for $1\leq i\leq n$,
$\n_{\pr}\pr$ is parallel to $\pr$ and hence $\Hess \mathbf{Y}_2(\pr,\pr)=0$.
Moreover,
in the Riemannian submanifold $(S^n,h_r)$ in $(M_{f,\rho},g)$,
we have
\begin{equation}
\De_g \mathbf{Y}_2=\De_{h_r}\mathbf{Y}_2+\lan \pr,\pr\ran^{-1}\Hess \mathbf{Y}_2(\pr,\pr)=\De_{h_r}\mathbf{Y}_2.
\end{equation}
As in \S \ref{NSCL}, for $f:(S^n,h_r)\ra (S^m,g_m)$ we gain
\begin{equation}
\De_g \mb{Y}_2=\De_{h_r}\mathbf{Y}_2=\tau(\mathbf{Y}_2)=\tau(f)-2e(f)\mb{Y}_2,
\end{equation}
and further,
\begin{equation}\label{La}
\aligned
&\De_g(\rho(r)\mb{Y}_2)\\
=&(\De_g\rho) \mb{Y}_2+\rho \De_g \mb{Y}_2+2\lan \pr,\pr\ran^{-1}\rho_r \n_{\pr}\mb{Y}_2+\sum_{i,j}g^{ij}\n_{E_i}\rho\n_{E_j}\mathbf Y_2\\
=&\rho\cdot\tau(f)+(\De_g \rho-2\rho\cdot e(f))\mb{Y}_2,
\endaligned
\end{equation}
where $(g^{ij})$ is the inverse matrix of $(g_{ij}):=\big(\lan E_i,E_j\ran\big)$.

Therefore, $\mb{H}=0$ implies
$\tau(f)=0$ and $\De_g \rho-2\rho\cdot e(f)=0$.
Conversely, $\tau(f)=0$ and $\De_g \rho-2\rho\cdot e(f)=0$ lead to
$\mb{H}=(\De_g(r\mb{Y}_1),0)$.
Since
$$0=\lan \mb{H},\pr\ran=\big\lan (\De_g(r\mb{Y}_1),0),(\mb{Y}_1,\rho_r\mb{Y}_2)\big\ran=\lan \De_g(r\mb{Y}_1),\mb{Y}_1\ran$$
and
$$0=\lan \mb{H},E_i\ran=\big\lan (\De_g(r\mb{Y}_1),0),(r\ep_i,\rho f_* \ep_i)\big\ran=r\lan \De_g(r\mb{Y}_1),\ep_i\ran\quad \forall 1\leq i\leq n,$$
it follows $\De_g(r\mb{Y}_1)=0$ and thus $\mb{H}=0$.

To show the equivalence of Condition (b) and \eqref{ODE},
we express $e(f)$ and $\De_g \rho$ explicitly.
For an S-basis $\{\ep_1,\cdots,\ep_n\}$ of $(T_x S^n,g_n)$ subject to $\la_1,\cdots,\la_n$,
\begin{equation}
\lan E_i,E_j\ran=\lan (r\ep_i,\rho_r f_* \ep_i),(r\ep_j,\rho f_* \ep_j)\ran=(r^2+\rho^2 \la_i^2)\de_{ij}
\end{equation}
and
\begin{equation}\label{den2}
2e(f)=\sum_{i=1}^n \f{\lan f_* \ep_i,f_* \ep_i\ran}{h_r(\ep_i,\ep_i)}=\sum_{i=1}^n \f{\lan f_* \ep_i,f_* \ep_i\ran}{\lan E_i,E_i\ran}=\sum_{i=1}^n \f{\la_i^2}{r^2+\rho^2\la_i^2}.
\end{equation}
%
Meanwhile,
we have
\begin{equation}\label{La1}\aligned
\De_g \rho(r)&=\rho_r \De_g r+\rho_{rr}|\text{grad}_g r|^2\\
&=\rho_r\left(\f{\text{Hess}_g r(\pr,\pr)}{\lan \pr,\pr\ran}+\sum_{i=1}^n \f{\text{Hess}_g r(E_i,E_i)}{\lan E_i,E_i\ran}\right)+\rho_{rr}|\text{grad}_g r|^2
\endaligned
\end{equation}
where $\text{Hess}_g$ and $\text{grad}_g$
are Hessian and 
gradient operators respectively. 
Using
\begin{equation}\label{grad}
|\text{grad}_g r|^2=\lan \pr,\pr\ran^{-1}=\f{1}{1+\rho_r^2},
\end{equation}
\begin{equation}\label{hess1}\aligned
&\text{Hess}_g r(\pr,\pr)=\n_{\pr}dr(\pr)-dr(\n_{\pr}\pr)\\
=&-\f{\lan\n_{\pr}\pr,\pr\ran}{\lan \pr,\pr\ran}dr(\pr)=-\f{\n_{\pr} \lan \pr,\pr\ran}{2 \lan\pr,\pr\ran}
=-\f{\rho_r\rho_{rr}}{1+\rho_r^2}
\endaligned
\end{equation}
and
\begin{equation}\label{hess2}\aligned
&\text{Hess}_g r(E_i,E_i)=\n_{E_i}dr(E_i)-dr(\n_{E_i}E_i)\\
=&-\f{\lan \n_{E_i}E_i,\pr\ran}{\lan \pr,\pr\ran}dr(\pr)=\f{\lan \n_{E_i}\pr,E_i\ran}{\lan \pr,\pr\ran}
=\f{r+\rho\rho_r\la_i^2}{1+\rho_r^2},
\endaligned
\end{equation}
we can simplify \eqref{La1} to be
\begin{equation}\label{La3}
\De_g \rho
=\f{\rho_{rr}}{(1+\rho_r^2)^2}+\sum_{i=1}^n\f{\rho_r(r+\rho\rho_r \la_i^2)}{(1+\rho_r^2)(r^2+\rho^2\la_i^2)}
\end{equation}
which together with
\eqref{den2}
shows that Condition (b) and \eqref{ODE} are the same.
\end{proof}

\textbf{Remark. }
Let $\rho:U\ra \R$ be smooth (not requiring $\rho>0$) so that $M_{f,\rho}$ is minimal.
Set
$Z=\{r:\rho(r)=0\}$.
Then, by \eqref{La}, $f:(S^n,h_r)\ra (S^m,g_m)$ is harmonic for $r\in U- Z$.
If $Z$ has interior points, the analyticity forces $\rho\equiv 0$.
For $\rho\not\equiv 0$,
since the tension field is smoothly depending on metric,
 the harmonicity of $f:(S^n,h_r)\ra (S^m,g_m)$ holds for $r\in Z$.
Therefore, `$\rho>0$' in the theorem can be replaced by `$\rho\not\equiv 0$'.

\bigskip

In the sequel 
we first establish a simple version of Theorem \ref{min_graph1} for LOMSEs
and then obtain several important applications.

\subsection{Entire minimal graphs associated to LOMSEs}\label{EMG}

For an LOMSE $f:S^n\ra S^m$ of $(n,p,k)$-type
we know in Theorem \ref{npk} that
\begin{equation}\label{sing2}
\la=\sqrt{\f{k(k+n-1)}{p}}
\end{equation}
is the nonzero singular value 
and
from Theorem \ref{g2} that
$f=i\circ \pi$ where $\pi:(S^n,g_n)\ra (P,h)$ is a harmonic Riemannian submersion and $i:(P,h)\ra (S^m,\la^{-2}g_m)$ an isometric minimal immersion.

Let $x\in S^n$,
$\la_1=\cdots=\la_p=\la$ and $\la_{p+1}=\cdots=\la_n=0$.
Then under an S-basis $\{\ep_1,\cdots,\ep_n\}$ of $(T_x S^n,g_n)$
for $f$
subject to $\la_1,\cdots,\la_n$,
\begin{equation}\label{hr}
h_r(\ep_i,\ep_j)=(r^2+\rho^2\la_i^2)\de_{ij}=\left\{\begin{array}{ll}
0 & i\neq j,\\
r^2+\rho^2\la^2 & 1\leq i=j\leq p,\\
r^2 & p+1\leq i=j\leq n.
\end{array}\right.
\end{equation}
Then
$\pi:(S^n,h_r)\ra (P,\mu h)$ is a Riemannian submersion and $i:(P,\mu h)\ra (S^m,\mu\la^{-2}g_m)$ an isometric minimal immersion,
where $\mu:=r^2+\rho^2\la^2$.
Further, by Lemma \ref{lem3}
we gain the harmonicity of $\pi:(S^n,h_r)\ra (P,\mu h)$
as in the proof of Theorem \ref{npk}.
Employing 
\eqref{com2},
we know that $f=i\circ \pi:(S^n,h_r)\rightarrow(S^m,g_m)$ is always harmonic for $r\in U$,
and hence simplify Theorem \ref{min_graph1} for LOMSEs.

\begin{thm}\label{min_graph2}
Given an LOMSE $f:S^n\ra S^m$ and smooth $\rho:U\subset (0,+\infty)\ra \R$,
$M_{f,\rho}$ is minimal in $\R^{n+m+2}$ if and only if
\begin{equation}\label{ODE1}
\f{\rho_{rr}}{1+\rho_r^2}+\f{(n-p)\rho_r}{r}+\f{p(\f{\rho_r}{r}-\f{\la^2\rho}{r^2})}{1+\f{\la^2\rho^2}{r^2}}=0.
\end{equation}

\end{thm}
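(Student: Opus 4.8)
The plan is to specialize the general characterization in Theorem~\ref{min_graph1} to the case of an LOMSE, using the fact (established just above the statement) that for an LOMSE the harmonicity Condition~(a) of Theorem~\ref{min_graph1} is automatic for every $r\in U$. Indeed, the discussion preceding the statement shows that $f=i\circ\pi:(S^n,h_r)\to(S^m,g_m)$ is harmonic for all $r$, because $\pi:(S^n,h_r)\to(P,\mu h)$ remains a harmonic Riemannian submersion (via Lemma~\ref{lem3} and Proposition~\ref{ER}) and $i:(P,\mu h)\to(S^m,\mu\la^{-2}g_m)$ remains an isometric minimal immersion, with $\mu=r^2+\rho^2\la^2$. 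Thus only Condition~(b), in its equivalent ODE form \eqref{ODE}, needs to be translated into \eqref{ODE1}.

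The main computation is therefore to insert the special singular-value spectrum of an LOMSE into equation~\eqref{ODE}. First I would recall from Theorem~\ref{npk} (and \eqref{sing2}) that at each point the singular values of $(f_*)_x$ are $\la_1=\cdots=\la_p=\la$ and $\la_{p+1}=\cdots=\la_n=0$, with the single nonzero value $\la=\sqrt{k(k+n-1)/p}$ independent of $x$. Next I would split the sum $\sum_{i=1}^n$ in \eqref{ODE} according to this spectrum. The $n-p$ indices with $\la_i=0$ each contribute
\begin{equation}
\f{\f{\rho_r}{r}-0}{1+0}=\f{\rho_r}{r},
\end{equation}
giving a total of $\f{(n-p)\rho_r}{r}$; the $p$ indices with $\la_i=\la$ each contribute the common term
\begin{equation}
\f{\f{\rho_r}{r}-\f{\la^2\rho}{r^2}}{1+\f{\la^2\rho^2}{r^2}},
\end{equation}
giving $p$ times that quantity. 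Substituting these two blocks back into \eqref{ODE} yields exactly \eqref{ODE1}, and the equivalence runs in both directions since the passage from \eqref{ODE} to \eqref{ODE1} is a pointwise algebraic rewriting with no loss of information.

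Since this is a direct specialization, there is no serious analytic obstacle; the only point requiring care is to make explicit why Condition~(a) holds \emph{for every} $r\in U$ rather than at an isolated radius. The subtlety is that as $r$ varies the induced metric $h_r$ changes by a factor that is constant along fibers ($\mu=r^2+\rho^2\la^2$) but differs between fiber and horizontal directions; one must verify that the hypotheses of Lemma~\ref{lem3} — a uniform conformal factor along each leaf together with preservation of orthogonality between leaf-tangent and transverse vectors — are satisfied by the pair $(g_n,h_r)$ restricted to the Hopf fibration, so that minimality of the fibers (hence harmonicity of $\pi$ by Proposition~\ref{ER}) persists. Granting this, the composition formula \eqref{com2} upgrades harmonicity of $\pi$ and minimality of $i$ to harmonicity of $f:(S^n,h_r)\to(S^m,g_m)$, and the theorem reduces cleanly to the one-variable ODE \eqref{ODE1}.
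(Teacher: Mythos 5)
Your proposal is correct and takes essentially the same route as the paper: in \S\ref{EMG} the paper likewise observes that for an LOMSE the decomposition $f=i\circ\pi$ together with \eqref{hr}, Lemma \ref{lem3}, Proposition \ref{ER} and the composition formula \eqref{com2} makes Condition (a) of Theorem \ref{min_graph1} automatic for every $r\in U$ (noting harmonicity is unaffected by the constant rescaling $\mu=r^2+\rho^2\la^2$ of the target metric), and then reduces the statement to substituting the spectrum $\la_1=\cdots=\la_p=\la$, $\la_{p+1}=\cdots=\la_n=0$ into \eqref{ODE} to obtain \eqref{ODE1}. The fiber/horizontal verification of the hypotheses of Lemma \ref{lem3} that you flag as the delicate point is exactly the step the paper carries out there.
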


\textbf{Remark.}
Description \eqref{ODE1}
for $H^{2m-1,m}$ where $m=2,4$ or $8$
was first
found
 by Ding-Yuan \cite{d-y} based on special symmetries of Hopf maps.
It should be pointed out that our argument is also applicable to non-equivariant $f$.


Let us analyze the ODE (\ref{ODE1}).
As in \cite{d-y}, set
\begin{equation}\label{varphi}
\varphi:=\f{\rho}{r},\quad t:=\log r.
\end{equation}
Since
\begin{equation}\label{rhor}
\rho_r=(e^t\varphi)_t \f{dt}{dr}=\varphi_t+\varphi
\end{equation}
and
\begin{equation}
\rho_{rr}=(\varphi_t+\varphi)_t \f{dt}{dr}=\f{\varphi_{tt}+\varphi_t}{r},
\end{equation}
we can rewrite \eqref{ODE1} as
\begin{equation}
\f{\varphi_{tt}+\varphi_t}{1+(\varphi_t+\varphi)^2}+(n-p)(\varphi_t+\varphi)+\f{p(\varphi_t+\varphi-\la^2\varphi)}{1+\la^2 \varphi^2}=0.
\end{equation}
By introducing
\begin{equation}\label{psi}
\psi:=\varphi_t,
\end{equation}
we transform \eqref{ODE1} to
an autonomous system
\begin{equation}\label{ODE2}
\aligned
\left\{\begin{array}{ll}
\varphi_t=\psi,\\
\psi_t=-\psi-\Big[\big(n-p+\f{p}{1+\la^2\varphi^2}\big)\psi+\big(n-p+\f{(1-\la^2)p}{1+\la^2\varphi^2}\big)\varphi\Big]\big[1+(\varphi+\psi)^2\big].
\end{array}\right.
\endaligned
\end{equation}
So
$\g: t\mapsto (\varphi(t),\psi(t))$ satisfies \eqref{ODE2} if and only if
$\g$ is an integral curve of vector field
$X:=(X_1,X_2)$
where
\begin{equation}\label{X12}\aligned
\left\{\begin{array}{ll}
X_1&=\psi,\\
X_2&=-\psi-\Big[\big(n-p+\f{p}{1+\la^2\varphi^2}\big)\psi+\big(n-p+\f{(1-\la^2)p}{1+\la^2\varphi^2}\big)\varphi\Big]\big[1+(\varphi+\psi)^2\big].
\end{array}\right.
\endaligned
\end{equation}
It can be seen that $X$ has exactly 3 zero points $(0,0)$ and $(\pm\varphi_0,0)$, where
\begin{equation}\label{varphi0}
\varphi_0:=\sqrt{\f{p-n\la^{-2}}{n-p}}.
\end{equation}
Since $X$ is symmetric about the origin (i.e., $X(-\varphi, -\psi)=-X(\varphi,\psi)$), we shall only focus on the half plane $\varphi\geq0$.

\textbf{Remark. }
The zero point $(0,0)$ 
in fact stands for the coordinate $(n+1)$-plane.
The other zero point $(\varphi_0,0)$ 
corresponds to
$\rho(r)=\varphi_0 r$
that gives a Lipschitz solution
$$F_{f,\rho}(y)=\left\{\begin{array}{cc}
\varphi_0|y|f(\f{y}{|y|}) & y\neq 0\\
0 & y=0
\end{array}\right.$$
to the minimal surface equations.
It is easy to see that $\varphi_0=\tan \th$.

The linearization of the system \eqref{ODE2} at $(0,0)$ is
\begin{equation}
\left(\begin{array}{c}\varphi_t\\ \psi_t\end{array}\right) =A\left(\begin{array}{c}\varphi\\ \psi\end{array}\right)
\end{equation}
where
\begin{equation}\label{A}
A=\left(\begin{array}{cc}
0 & 1\\
\la^2p-n & -n-1
\end{array}
\right)=
\left(\begin{array}{cc}
0 & 1\\
k(k+n-1)-n & -n-1
\end{array}
\right).
\end{equation}
Through calculations the eigenvalues of $A$  are
\begin{equation}\label{la12}
\mu_1=k-1,\qquad \mu_2=-n-k,
\end{equation}
with eigenvectors
\begin{equation}\label{V12}
V_1:=(1, \mu_1)^T,\quad V_2:=(1, \mu_2)^T,
\end{equation}
respectively.
Hence  $(0,0)$ is a saddle critical point.

The linearization of the system \eqref{ODE2} at $(\varphi_0,0)$ is
\begin{equation}
\left(\begin{array}{c}(\varphi-\varphi_0)_t\\ \psi_t\end{array}\right)=B\left(\begin{array}{c}\varphi-\varphi_0\\ \psi\end{array}\right)
\end{equation}
where
\begin{equation}
B=\left(\begin{array}{cc}
0 & 1\\
a & b
\end{array}
\right),
\end{equation}
and
\begin{equation}\label{ab}\aligned
a&:=\f{2\la^2(1-\la^2)p\varphi_0^2(1+\varphi_0^2)}{(1+\la^2\varphi_0^2)^2}=2n\Big(\f{n}{k(k+n-1)}-1\Big),\\
b&:=-1-(n-p+\f{p}{1+\la^2\varphi_0^2})(1+\varphi_0^2)=-n-1.
\endaligned
\end{equation}
For the eigenvalues $\mu_3,\mu_4$ of $B$,
we have
 $$\mu_3+\mu_4=\text{tr }B=b<0,\ \ \ \mu_3\mu_4=|B|=-a>0 \text{\ \ \ and}$$
\begin{equation*}
(\mu_3-\mu_4)^2=(\mu_3+\mu_4)^2-4\mu_3\mu_4=b^2+4a
=n^2-6n+1+\f{8n^2}{k(k+n-1)}.
\end{equation*}
When $n=3$, $k\geq 4$ or $n=5,k\geq 6$,
$\{\mu_3,\mu_4\}$ become a pair of conjugate complex numbers with negative real part;
while in other cases, both $\mu_3$ and $\mu_4$ are negative real numbers. Therefore
\begin{enumerate}

\item[(I)] If $(n,p,k)=(3,2,2),(5,4,2),(5,4,4)$ or $n\geq 7$, $(\varphi_0,0)$ is a stable center of (\ref{ODE2});

\item[(II)] If $(n,p)=(3,2)$, $k\geq 4$ or $(n,p)=(5,4),k\geq 6$, $(\varphi_0,0)$ is a stable spiral point of (\ref{ODE2}).

\end{enumerate}

Using proceding local analysis,
we are able to
establish
the existence of a nontrivial bounded solution in both cases.
A technique point is to construct suitable barrier functions.
Since the proofs are a bit long and subtle, we leave them in Appendices \S \ref{App3}-\ref{App4}.

\begin{pro}\label{case1}
If $(n,p,k)=(3,2,2),(5,4,2),(5,4,4)$ or $n\geq 7$, then there exists a smooth solution $t\in \R\mapsto (\varphi(t),\psi(t))$ to (\ref{ODE2}), with properties
\begin{itemize}
\item $\lim\limits_{t\ra -\infty}(\varphi(t),\psi(t))=(0,0)$;
\item $\varphi(t)=O(e^{(k-1) t})$ and $\psi(t)=O(e^{(k-1) t})$ as $t\ra -\infty$;
\item $\lim\limits_{t\ra +\infty}(\varphi(t),\psi(t))=(\varphi_0,0)$;
\item $t\mapsto \varphi(t)$ is a strictly increasing function;
\item $\psi(t)> 0$ for every $t\in \R$.
\end{itemize}
$$\includegraphics[scale=0.65]{F1.eps}$$

\end{pro}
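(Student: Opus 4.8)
The plan is to realize the desired trajectory as the branch of the unstable manifold of the saddle $(0,0)$ that enters the open first quadrant, then to confine it to the region
$$R:=\{(\varphi,\psi):\ 0<\varphi<\varphi_0,\ \psi>0\}$$
by barrier arguments, after which monotonicity forces convergence to $(\varphi_0,0)$. First I would invoke the stable/unstable manifold theorem at $(0,0)$. Since the eigenvalues \eqref{la12} are $\mu_1=k-1>0$ and $\mu_2=-n-k<0$, the unstable manifold is one-dimensional and tangent to $V_1=(1,k-1)^T$ from \eqref{V12}; as $k\geq 2$ both components of $V_1$ are positive, so one branch emanates into $\{\varphi>0,\psi>0\}$. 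Along it $(\varphi(t),\psi(t))=c\,e^{(k-1)t}V_1+o(e^{(k-1)t})$ as $t\ra-\infty$ for some $c>0$, which already yields the first two bullet points.

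Next I would read off the sign of the field $X=(X_1,X_2)$ of \eqref{X12} on $\partial R$. On the bottom edge $\psi=0$, $0<\varphi<\varphi_0$, one has $X_1=0$ and $X_2=-\big(n-p+\f{(1-\la^2)p}{1+\la^2\varphi^2}\big)\varphi\,(1+\varphi^2)$. Writing $G(\varphi):=n-p+\f{(1-\la^2)p}{1+\la^2\varphi^2}$, the hypothesis $\la^2>n/p$ gives $G(0)=n-\la^2p<0$, while the identity $1+\la^2\varphi_0^2=\f{p(\la^2-1)}{n-p}$ (equivalent to the definition \eqref{varphi0} of $\varphi_0$) gives $G(\varphi_0)=0$; since $G$ is increasing, $G<0$ on $(0,\varphi_0)$ and hence $X_2>0$ there. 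Thus the flow is pushed strictly upward along the bottom edge, and the trajectory cannot leave $R$ through $\psi=0$. The only remaining escape route is the right edge $\varphi=\varphi_0$ with $\psi>0$, where $X_1=\psi>0$ points outward. To preclude this I would construct an upper barrier: a curve $\psi=\beta(\varphi)$ on $[0,\varphi_0]$ with $\beta>0$ on $(0,\varphi_0)$, $\beta(\varphi_0)=0$, lying above the emanating orbit for small $\varphi>0$, and satisfying the sub-tangency inequality
$$\f{X_2(\varphi,\beta(\varphi))}{\beta(\varphi)}\ \leq\ \beta'(\varphi),\qquad 0<\varphi<\varphi_0,$$
which says the field crosses the graph of $\beta$ from above. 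Together with the repelling bottom edge, this traps the orbit in the curvilinear triangle bounded by $\psi=0$ and $\psi=\beta(\varphi)$, which is squeezed to the single vertex $(\varphi_0,0)$.

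Granting the trapping, the conclusion becomes soft. Inside $R$ we have $\varphi_t=\psi>0$, so $\varphi$ is strictly increasing (fourth bullet) and $\psi>0$ for all $t$ (fifth bullet); moreover $\varphi$ is bounded above by $\varphi_0$, whence $\varphi(t)\ra\varphi_\infty\in(0,\varphi_0]$. Boundedness makes the solution global in $t$, so its $\omega$-limit set $\Lambda$ is nonempty, compact, connected and invariant. Every point of $\Lambda$ has first coordinate $\varphi_\infty$; invariance then forces $\psi\equiv 0$ on $\Lambda$ (otherwise $\varphi$ would move along $\Lambda$), so $\Lambda$ consists of equilibria on the line $\{\varphi=\varphi_\infty,\ \psi=0\}$. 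As the only equilibria in $\overline R$ are $(0,0)$ and $(\varphi_0,0)$ and $\varphi_\infty>0$, we get $\Lambda=\{(\varphi_0,0)\}$, i.e. $\lim_{t\ra+\infty}(\varphi,\psi)=(\varphi_0,0)$ (third bullet). All five properties follow.

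The hard part will be the explicit construction of $\beta$. The sub-tangency inequality must be verified uniformly on $(0,\varphi_0)$ against the rational nonlinearity in $X_2$, while $\beta$ must simultaneously dominate the unstable-manifold orbit near the origin and collapse to $0$ at $\varphi_0$ with a one-sided slope compatible with the negative-real spectrum of $B$ at $(\varphi_0,0)$ encoded in $a,b$ of \eqref{ab}. This delicate tuning of the barrier to the relations among $n,p,k,\la$ and $\varphi_0$ is precisely what I would relegate to the appendix.
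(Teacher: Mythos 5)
Your skeleton is exactly the paper's: the paper also takes the unstable-manifold branch at the saddle $(0,0)$ tangent to $V_1$ (citing Hartman for the $O(e^{\mu_1 t})$ asymptotics, $\mu_1=k-1$), traps it in a region bounded below by the segment of the $\varphi$-axis from $(0,0)$ to $(\varphi_0,0)$ and above by a barrier curve vanishing at $\varphi_0$, checks the field points inward on both pieces (your conditions are literally the paper's (A) and (B), with your sub-tangency inequality in strict form), and then concludes via the bounded, monotone $\omega$-limit argument. Your sign analysis on the bottom edge and the soft endgame are correct as stated.

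However, there is a genuine gap: you never construct the barrier $\beta$, and that construction is not a relegable technicality --- it is the entire content of the proposition. Note that the hypothesis $(n,p,k)=(3,2,2),(5,4,2),(5,4,4)$ or $n\geq 7$ appears nowhere in your argument, yet the statement is \emph{false} without it: for $(n,p)=(3,2)$, $k\geq 4$ or $(n,p)=(5,4)$, $k\geq 6$, the same unstable-manifold orbit exists but $(\varphi_0,0)$ is a spiral, the orbit overshoots $\varphi=\varphi_0$, and $\psi$ changes sign infinitely often (this is Proposition \ref{case2}); so no barrier of your kind can exist there, and any proof must use the case hypothesis precisely where you stopped. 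The paper's Appendix \S\ref{App3} does this by the explicit choice $\beta(\varphi)=h(\varphi)=\f{f_1(\varphi)\varphi}{c(n-p)}$ with $f_1(\varphi)=\f{(\la^2-1)p}{1+\la^2\varphi^2}-(n-p)$ (a multiple of the null-cline factor, so $h(\varphi_0)=0$ automatically and $h'(0)>\mu_1$ puts the orbit below the barrier near the origin), then substitutes $s=\f{1+\la^2\varphi_0^2}{1+\la^2\varphi^2}-1\in(0,\la^2\varphi_0^2)$ to reduce the sub-tangency inequality to positivity of a cubic $F(s)=F(0)+sG(s)$ whose quadratic factor $G$ is concave; positivity then reduces to the three checks $F(0)\geq 0$, $G(0)>0$, $G(\la^2\varphi_0^2)>0$, verified case by case with $c=1$ for $(3,2,2)$ and $(5,4,2)$, $c=\f{6}{7}$ for $(5,4,4)$, and $c=\f{1}{2}$ for $n\geq 7$ (using $p<n<2p$, $p\geq 4$ from Theorem \ref{npk2}). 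Without this, or an equivalent quantitative argument tied to the listed $(n,p,k)$, your proposal establishes only the two bullets at $t\ra -\infty$; the convergence to $(\varphi_0,0)$, the monotonicity of $\varphi$, and the positivity of $\psi$ all remain unproven.
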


\begin{pro}\label{case2}
If $(n,p)=(3,2)$, $k\geq 4$ or $(n,p)=(5,4)$, $k\geq 6$, then there exist a smooth solution $t\in \R \mapsto (\varphi(t),\psi(t))$ to (\ref{ODE2})
and a strictly increasing sequence $\{T_i:i\in \Bbb{Z}^+\}$ in $\R$, such that
\begin{itemize}
\item $\lim\limits_{t\ra -\infty}(\varphi(t),\psi(t))=(0,0)$;
\item $\varphi(t)=O(e^{(k-1) t})$ and $\psi(t)=O(e^{(k-1) t})$ as $t\ra -\infty$;
\item $\lim\limits_{t\ra +\infty}(\varphi(t),\psi(t))=(\varphi_0,0)$;
\item $\lim\limits_{i\ra \infty}T_i=+\infty$;
\item $\psi(T_i)=0$ for all $i\in \Bbb{Z}^+$;
\item With $\varphi_i:=\varphi(T_i)$, $\{\varphi_{2m-1}:m\in \Bbb{Z}^+\}$
is  strictly decreasing
and
$\{\varphi_{2m}:m\in \Bbb{Z}^+\}$
is strictly increasing
with the common limit $\varphi_0$;
\item $\psi(t)>0$ for $t\in (-\infty,T_1)\cup \left(\bigcup\limits_{m\in \Bbb{Z}^+}(T_{2m},T_{2m+1})\right)$;
\item $\psi(t)<0$ for $t\in \bigcup\limits_{m\in \Bbb{Z}^+}(T_{2m-1},T_{2m})$;
\item $(\varphi+\psi)(t)>0$ for all $t\in \R$.
\end{itemize}
Namely, the orbit of this solution tends to the saddle point $(0,0)$ as $t\ra -\infty$ and spins around the spiral point $(\varphi_0,0)$
as $t\ra +\infty$.
$$\includegraphics[scale=0.65]{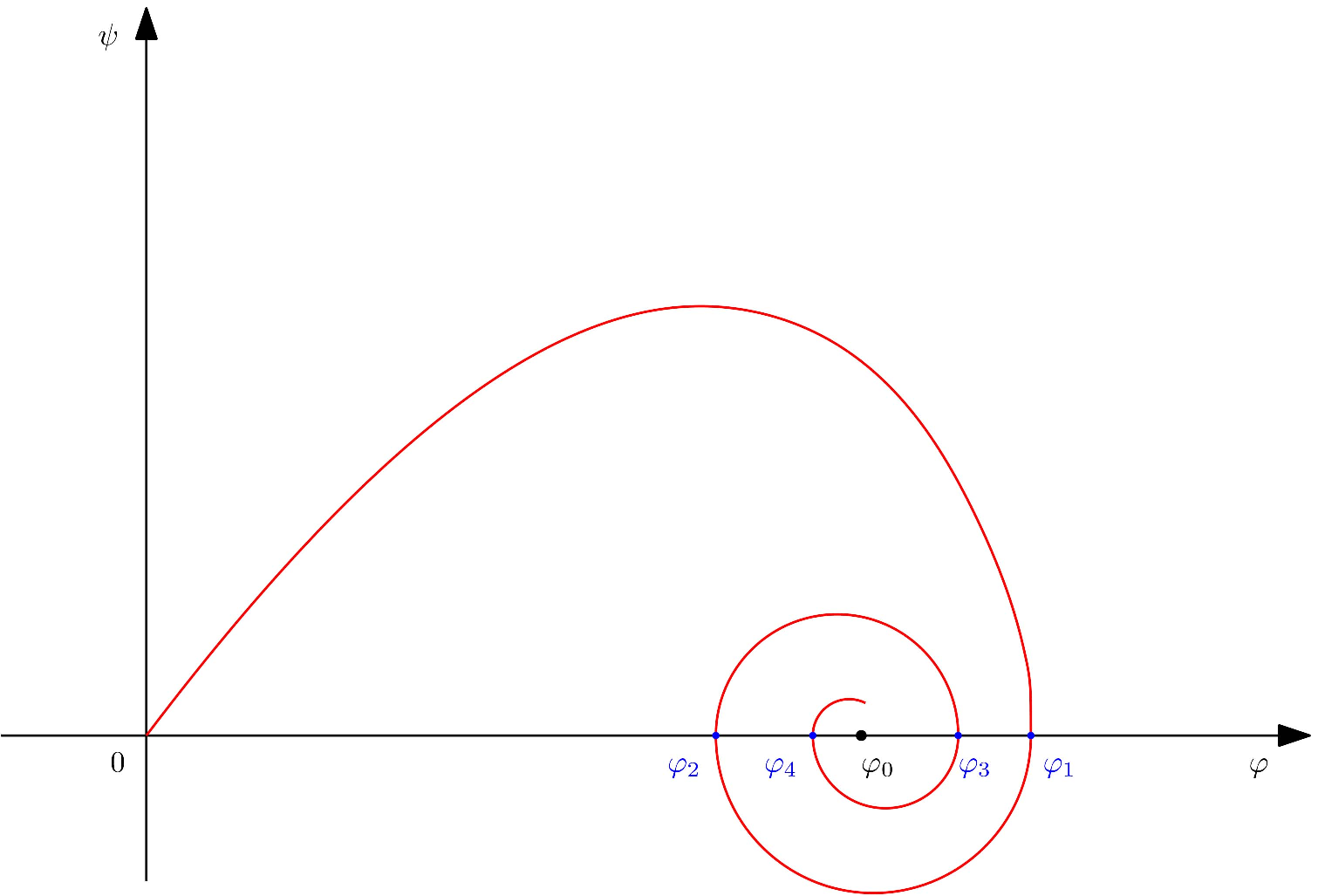}$$
\end{pro}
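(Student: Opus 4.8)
The plan is to obtain the required trajectory as the forward continuation of the one-dimensional unstable manifold of the saddle $(0,0)$, and then to show that this orbit is trapped in a bounded region in which it spirals into the sink $(\varphi_0,0)$. First I would apply the unstable manifold theorem at $(0,0)$: by \eqref{la12}--\eqref{V12} the linearization $A$ has a single positive eigenvalue $\mu_1=k-1$ with eigenvector $V_1=(1,k-1)^T$, so up to a time translation there is a unique orbit $\gamma(t)=(\varphi(t),\psi(t))$ with $\gamma(t)\to(0,0)$ and $\gamma(t)=e^{(k-1)t}(V_1+o(1))$ as $t\to-\infty$. This yields at once the first two bullets, in particular $\varphi(t),\psi(t)=O(e^{(k-1)t})$, and shows that $\gamma$ enters the open quadrant $\{\varphi>0,\,\psi>0\}$, where $\varphi+\psi>0$, for $t$ near $-\infty$.

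The second step is confinement. I would construct a bounded, positively invariant region $\Omega\subset\{\varphi\geq0\}$ having $(\varphi_0,0)$ as an interior point and as its only equilibrium, so that once $\gamma$ enters $\Omega$ it exists for all $t$ and remains bounded. The crucial piece of $\partial\Omega$ is a segment of the line $\{\varphi+\psi=0\}$: a direct computation from \eqref{X12} gives, on this line with $\varphi>0$, that $(\varphi+\psi)_t=\varphi\cdot\frac{p\la^2}{1+\la^2\varphi^2}>0$, so $X$ points strictly into $\{\varphi+\psi>0\}$ and $\gamma$ can never touch it; this already proves $(\varphi+\psi)(t)>0$ for all $t$, the final bullet. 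For the remaining boundary I would exhibit barrier curves capping $\psi$ from above and $\varphi$ from the right along which $X$ again points inward --- precisely the barrier functions whose construction I defer to the appendix. Boundedness together with the absence of finite-time blow-up then gives long-time existence.

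With $\gamma$ confined to the compact invariant set $\Omega$, the third step is to identify its $\omega$-limit set. By \eqref{varphi0} the field $X$ has only the zeros $(0,0)$ and $(\pm\varphi_0,0)$, so $(\varphi_0,0)$ is the unique equilibrium in $\Omega$. Excluding closed orbits --- either by producing a Lyapunov function that strictly decreases along $\gamma$ off $(\varphi_0,0)$, or by a Bendixson--Dulac sign argument for a suitable multiple of $\text{div}\,X$ --- the Poincar\'e--Bendixson theorem then forces $\gamma(t)\to(\varphi_0,0)$, which is the third bullet.

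Finally I would read off the spiral structure from the local picture at the sink. In the Type~(II) range, \eqref{ab} gives $\text{tr}\,B=-n-1<0$ while $(\mu_3-\mu_4)^2=n^2-6n+1+\frac{8n^2}{k(k+n-1)}<0$, so $B$ has a complex-conjugate pair of eigenvalues with negative real part and $(\varphi_0,0)$ is a genuine spiral sink. Consequently $\gamma$ crosses the axis $\{\psi=0\}$ transversally and infinitely often; enumerating the crossing times as a strictly increasing sequence $T_i\to+\infty$ produces $\psi(T_i)=0$, the alternating sign of $\psi$ on $(T_i,T_{i+1})$, and --- since $\gamma$ winds around $(\varphi_0,0)$ --- the monotone convergences $\varphi_{2m-1}\searrow\varphi_0$ and $\varphi_{2m}\nearrow\varphi_0$. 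I expect the main obstacle to be the second and third steps combined: building global barriers that trap $\gamma$ while respecting $\varphi+\psi>0$, and finding a Lyapunov function adapted to the non-Li\'enard factor $1+(\varphi+\psi)^2$ in \eqref{ODE2} so as to exclude limit cycles, is decidedly more delicate than the purely local analysis at the two critical points.
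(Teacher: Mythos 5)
Your outline reproduces the paper's skeleton (unstable manifold at the saddle, a trapping region, exclusion of closed orbits, local spiral analysis at $(\varphi_0,0)$), and several of your local computations are correct --- in particular the observation that on $\{\varphi+\psi=0,\ \varphi>0\}$ one has $(\varphi+\psi)_t=\frac{\lambda^2 p\,\varphi}{1+\lambda^2\varphi^2}>0$, which gives a clean direct proof of the last bullet, and the eigenvalue computations at both critical points. But the proposal leaves unproved exactly the two steps that constitute the actual content of the paper's argument. First, the confining barriers are only asserted ("deferred to the appendix"): the paper needs the specific barrier $g(\varphi)=\bigl(2f_1(\varphi)+\frac15\bigr)\varphi$, verified by a delicate case analysis in $s=\frac{1+\lambda^2\varphi_0^2}{1+\lambda^2\varphi^2}-1$, not merely to trap the orbit but to obtain the quantitative bound $\psi(t_1)\le\frac15\varphi_0$ at the first crossing of $\varphi=\varphi_0$, whence $\varphi_1\le\frac65\varphi_0$ and $\varphi_2\ge\frac45\varphi_0$. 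These numbers are not decoration; they are what makes the second step work.

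Second, and more seriously, your exclusion of limit cycles via "a Lyapunov function or a Bendixson--Dulac sign argument" is a placeholder for the hardest part, and there is reason to doubt it can be filled as stated: $\operatorname{div}X=\partial_\psi X_2$ contains the term $-2(\varphi+\psi)\bigl(f_2(\varphi)\psi-f_1(\varphi)\varphi\bigr)$, which changes sign across the isocline $X_2+\psi=0$, and the factor $1+(\varphi+\psi)^2$ destroys any Li\'enard structure, so no natural Dulac multiplier presents itself. The paper does \emph{not} prove a global nonexistence of cycles; Lemma \ref{lem} excludes them only in the region $\varphi\ge\sqrt{\frac{3p-n-1}{3(n-p)}}$, via a reflection--comparison argument (comparing the upper-half orbit $\psi(\varphi)$ with the time-reversed reflected orbit $\tilde\psi(\varphi)$ and showing $X_2+Y_2<0$ there), and this inequality genuinely fails near the $\psi$-axis. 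Consequently Poincar\'e--Bendixson alone leaves open the possibility that the $\omega$-limit set is a cycle encircling $(\varphi_0,0)$; the paper rules this out only because the quantitative bounds from Step 1 force every crossing amplitude into $[\frac45\varphi_0,\frac65\varphi_0]$, with $\frac45\varphi_0>\sqrt{\frac{3p-n-1}{3(n-p)}}$, so any putative cycle through $(a,0)$, $a=\lim\varphi_{2m}$, would lie in the region covered by Lemma \ref{lem}. Your proposal, lacking both the explicit barrier and the regional cycle-exclusion mechanism, stalls precisely at the point where these two ingredients must interlock.
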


Based on Propositions \ref{case1}-\ref{case2},
we find entire minimal graphs associated to LOMSEs
as follows.

\begin{thm}\label{graph}
For an LOMSE $f$, there exists a smooth function $\rho$ on $(0,\infty)$
such that
\begin{equation}\label{expF}
F_{f,\rho}(y)=\left\{\begin{array}{cc}
\rho(|y|)f(\f{y}{|y|}) & y\neq 0,\\
0 & y=0.
\end{array}\right.
\end{equation}
gives an analytic entire minimal graph with $C_{f,\th}$, the LOC associated to $f$, as its tangent cone at infinity.
\end{thm}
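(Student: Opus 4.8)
The plan is to read off $\rho$ directly from the global orbit furnished by Propositions \ref{case1}-\ref{case2} and then treat its two ends separately. First I would take the solution $t\in\R\mapsto(\varphi(t),\psi(t))$ of \eqref{ODE2} provided by Proposition \ref{case1} (Type I) or Proposition \ref{case2} (Type II) and invert the substitution \eqref{varphi}-\eqref{psi}: with $r=e^t$ I set $\rho(r):=r\,\varphi(\log r)$, a smooth function on $(0,\infty)$ since $t$ sweeps out all of $\R$. The sign condition $\rho_r=\varphi+\psi>0$ carried by both propositions makes $\rho$ strictly increasing, and together with $\rho(0^+)=0$ it forces $\rho>0$; hence Theorem \ref{min_graph2} applies and $M_{f,\rho}$ is minimal, i.e. $F_{f,\rho}$ solves the minimal surface system on $\R^{n+1}\setminus\{0\}$.

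Next I would analyze the end $t\to-\infty$, where the orbit leaves the saddle $(0,0)$ along the unstable eigendirection $V_1$ associated with $\mu_1=k-1$. The decay $\varphi(t)=O(e^{(k-1)t})$, $\psi(t)=O(e^{(k-1)t})$ recorded in the propositions then gives $\rho(r)=r\,\varphi(\log r)=O(r^{k})$ and $\rho_r(r)=(\varphi+\psi)(\log r)=O(r^{k-1})$ as $r\to0^+$. Since $k\geq2$, this yields $\rho_r(0)=0$, so by Morrey's regularity theorem \cite{mo} (as quoted after \eqref{frho1}) the minimal graph $F_{f,\rho}$ extends analytically through the origin. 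Thus $F_{f,\rho}$ is an analytic entire minimal graph over $\R^{n+1}$.

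Finally, to recognize $C_{f,\th}$ as the tangent cone at infinity I would exploit the scale invariance noted after \eqref{frho1}: the blow-down of the graph at scale $d$ is $F_{f,\rho_d}$ with $\rho_d(r)=\tfrac1d\rho(dr)=r\,\varphi(\log r+\log d)$. Because $\varphi(t)\to\varphi_0$ and $\psi(t)\to0$ as $t\to+\infty$ (the orbit limits to the stable point $(\varphi_0,0)$), one gets $\rho_d(r)\to\varphi_0 r$ together with $(\rho_d)_r(r)\to\varphi_0$, locally uniformly on $(0,\infty)$ as $d\to\infty$. The rescaled minimal graphs therefore converge smoothly, away from the vertex, to the graph of $\varphi_0|y|f(y/|y|)$, which is exactly the LOC $C_{f,\th}$ since $\varphi_0=\tan\th$ by \eqref{varphi0}; the monotonicity formula for minimal graphs then certifies this limit as the tangent cone at infinity.

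I expect the main obstacle to lie in the regularity at the origin. Everything else is bookkeeping once the delicate dynamical-systems input of Propositions \ref{case1}-\ref{case2} is granted, but at $0$ the angular factor $f(y/|y|)$ obstructs smoothness a priori, and one must be sure the decay $\rho(r)=O(r^{k})$ is strong enough to meet the hypotheses of Morrey's theorem so that the weak minimal graph is genuinely analytic across $0$ rather than merely continuous.
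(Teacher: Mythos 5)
Your proposal is correct and follows essentially the same route as the paper: define $\rho(r)=r\,\varphi(\log r)$ from the orbit of Propositions \ref{case1}--\ref{case2}, invoke Theorem \ref{min_graph2} for minimality, use the decay $\varphi,\psi=O(e^{(k-1)t})$ to get $\rho=O(r^k)$, $\rho_r=O(r^{k-1})$ and hence $C^1$ (so analytic by Morrey) regularity through the origin, and read off the tangent cone at infinity from $\varphi(t)\to\varphi_0=\tan\th$. Your blow-down argument via $\rho_d$ merely spells out in more detail the convergence the paper states directly, so there is no substantive difference.
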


\begin{proof}
Let $t\in \R\mapsto (\varphi(t),\psi(t))$ be the solution to (\ref{ODE2}) in Proposition \ref{case1}
or \ref{case2}.
Then
\begin{equation}
\rho(r)=r\cdot \varphi(\log r),\text{\ \ for \ } r\in (0,+\infty)\mapsto \rho(r)
\end{equation}
satisfies \eqref{ODE1}.
By Theorem \ref{min_graph2}, $M_{f,\rho}$
is a minimal submanifold in $\R^{n+m+2}$.
Moreover, as $r\ra 0$,
\begin{equation}
\rho(r)=r\cdot \varphi(\log r)=O(r^{k}),
\end{equation}
\begin{equation}
\rho_r=\varphi(\log r)+\psi(\log r)=O(r^{k-1}).
\end{equation}
Hence $F_{f,\rho}$ is $C^1$ 
and further (by Theorem 6.8.1 in \cite{mo})
analytic through the origin.

In addition, by Propositions \ref{case1}-\ref{case2}, $\varphi(t)\ra \varphi_0=\tan{\th}$ as $t\ra +\infty$.
Therefore, the LOC $C_{f,\th}$ is the unique tangent cone of the graph of $F_{f,\rho}$ at infinity.
\end{proof}

\bigskip

\subsection{Non-uniqueness and non-minimizing of minimal graphs}
The amusing spiral asymptotic behavior of the solutions in
Proposition \ref{case2}
produces following interesting corollaries.
They exhibit the non-uniqueness of analytic solutions to the corresponding Dirichlet problem
and the non-minimizing property of those LOCs.

\begin{cor}\label{uni}
For an LOMSE $f$ of $(n,p,k)$-type with $(n,p)=(3,2)$, $k\geq 4$ or $(n,p)=(5,4)$, $k\geq 6$,
there exist infinitely many analytic solutions to the Dirichlet problem for boundary data $f_{\varphi_0}$.
\end{cor}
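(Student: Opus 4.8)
The plan is to exploit the spiral asymptotic behavior established in Proposition \ref{case2} together with the rescaling invariance of the minimality condition. The key observation is that every intersection point of the graph of $\rho$ (in the $r\rho$-plane) with the ray $\rho=\varphi_0\cdot r$ yields an analytic solution to the Dirichlet problem for $f_{\varphi_0}$; equivalently, in the $t\varphi$-plane, every point $t$ with $\varphi(t)=\varphi_0$ corresponds to such a solution. The rescaling $\rho_d(r):=\frac{1}{d}\rho(d\cdot r)$ for $d>0$ produces a one-parameter family of minimal graphs, each of which restricts over $\mathbb{D}^{n+1}$ to a solution with the same boundary data whenever the radius-one slope matches $\varphi_0$.

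First I would let $\rho(r)=r\cdot\varphi(\log r)$ be the solution coming from Proposition \ref{case2}, which by Theorem \ref{min_graph2} gives a minimal $M_{f,\rho}$ and, as in the proof of Theorem \ref{graph}, an analytic $F_{f,\rho}$ through the origin. The crucial feature is that the orbit spirals into $(\varphi_0,0)$: by the last bullets of Proposition \ref{case2}, the sequence $\varphi_i=\varphi(T_i)$ alternates around $\varphi_0$ with $\{\varphi_{2m-1}\}$ strictly decreasing and $\{\varphi_{2m}\}$ strictly increasing, both converging to $\varphi_0$. Hence $\varphi(t)-\varphi_0$ changes sign infinitely often as $t\to+\infty$, and by the intermediate value theorem the set $\{t:\varphi(t)=\varphi_0\}$ is infinite. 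Fix one such value $t_\ast$ and set $r_\ast=e^{t_\ast}$, so $\rho(r_\ast)=\varphi_0\, r_\ast$.

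Next I would rescale. For each admissible $t_\ast$, choose $d=r_\ast$ and consider $\rho_d(r)=\frac{1}{d}\rho(d\cdot r)$; then $\rho_d(1)=\frac{1}{r_\ast}\rho(r_\ast)=\varphi_0$, so $F_{f,\rho_d}$ restricted over $\mathbb{D}^{n+1}$ has boundary value $\varphi_0\cdot f=f_{\varphi_0}$ on $S^n$. Since minimality is invariant under rescaling and $\rho_d$ again satisfies \eqref{ODE1}, each $F_{f,\rho_d}$ is an analytic solution to the minimal surface equations realizing the same Dirichlet data. To conclude there are infinitely many \emph{distinct} such solutions, I would argue that different spiral crossings produce genuinely different graphs: because $t\mapsto\varphi(t)$ is not eventually constant (the orbit keeps spiraling and never reaches $(\varphi_0,0)$ in finite time), distinct values $t_\ast$ give rescaled profiles $\rho_d$ whose derivatives $\rho_{d,r}(1)=\varphi(t_\ast)+\psi(t_\ast)=\varphi_0+\psi(t_\ast)$ differ, so the resulting graphs have distinct tangent data along the boundary and are therefore distinct solutions.

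The main obstacle I anticipate is verifying that the infinitely many rescaled solutions are pairwise distinct rather than coinciding after reparametrization. Two crossings $t_\ast\neq t_\ast'$ could in principle yield the same graph if the solution profile were self-similar, but this is precluded by the spiral dynamics: at successive crossings the velocity $\psi(t_\ast)$ takes different nonzero values (indeed $\psi$ has strict sign on each interval $(T_{i},T_{i+1})$ and the crossings occur in the interiors of these intervals where $\psi\neq 0$), so the normal derivative of $F_{f,\rho_d}$ along $\partial\mathbb{D}^{n+1}$ genuinely varies with the crossing. Making this separation quantitative — and confirming that the analyticity and $C^1$-matching at the origin survive each rescaling uniformly — is the technical heart of the argument, but it follows the same regularity reasoning already used in Theorem \ref{graph}.
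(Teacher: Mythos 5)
Your proposal is correct and follows essentially the same route as the paper: the paper likewise takes the increasing sequence $\{t_i\}$ with $\varphi(t_i)=\varphi_0$ guaranteed by the spiral orbit of Proposition \ref{case2}, sets $d_i=e^{t_i}$, and uses the rescalings $F_{f,\rho_{d_i}}(y)=\frac{1}{d_i}F_{f,\rho}(d_i\cdot y)$ to produce the infinitely many analytic solutions with boundary value $\varphi_0\cdot f$. Your extra verification that the solutions are pairwise distinct via the boundary derivatives $\varphi_0+\psi(t_i)$ (nonzero, alternating in sign, and non-repeating since the orbit is non-periodic and tends to $(\varphi_0,0)$) is a sound elaboration of a point the paper leaves implicit.
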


\begin{proof}
For the solution $t\in \R\mapsto (\varphi(t),\psi(t))$ to \eqref{ODE2} in Proposition \ref{case2},
define $\{t_i\}$ to be the increasing sequence
for
$\varphi(t_i)=\varphi_0$.
Set $d_i=e^{t_i}$ and recall
\begin{equation}\label{Fd_i}
F_{f,\rho_{d_i}}(y)=\f{1}{d_i}F_{f,\rho}(d_i\cdot y),\quad \text{ for } y\in \mathbb D^{n+1} \text{ and } i\in\Bbb{Z}^+.
\end{equation}
Since the minimality is rescaling invariant,
$\{F_{f,\rho_{d_i}}:i\in \mathbb Z^+\}$ give
infinitely many analytic solutions to the minimal surface equations,
with (see \eqref{expF})
\begin{equation}
F_{f,\rho_{d_i}}(x)=\frac{\rho(d_i)}{d_i}f(x)=\varphi(t_i)f(x)=\varphi_0 \cdot f(x),\quad \text{ for } x\in \p \mathbb D^{n+1}.
\end{equation}
Hence we accomplish the proof.
\end{proof}

\textbf{Remark.}
 Similarly, {\it for each $\varphi\in [0,\varphi_1]$, there exists at least one analytic solution to
the Dirichlet problem for $f_\varphi$;
and moreover, for $\varphi\in [\varphi_2,\varphi_1)$
analytic solutions are not unique.}

\bigskip

\begin{cor}\label{non-min}
For an LOMSE $f$ of $(n,p,k)$-type with $(n,p)=(3,2)$, $k\geq 4$ or $(n,p)=(5,4)$, $k\geq 6$,
the LOC $C_{f,\th}$ is non-minimizing.
\end{cor}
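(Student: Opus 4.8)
The plan is to exploit the spiral asymptotic behavior established in Proposition~\ref{case2} together with the monotonicity formula for the density of minimal submanifolds (currents) in Euclidean space. The key conceptual point is that the density ratio of an area-minimizing current is monotone nondecreasing in the radius, and that the LOC $C_{f,\th}$ is a cone with density equal to the constant value $\Theta_\infty := V_{n,p,k}/\omega_n$ (up to normalization) at every scale; meanwhile, the analytic entire minimal graph $M_{f,\rho}$ constructed in Theorem~\ref{graph} has the LOC as its tangent cone at infinity but is smooth (in particular, a plane with multiplicity one) near the origin, so its density at the origin equals $1$.

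First I would recall that, by Theorem~\ref{graph}, there is an entire minimal graph $M_{f,\rho}$ whose tangent cone at infinity is exactly $C_{f,\th}$. The crucial feature distinguishing Type~(II) from Type~(I) is the spiraling: by Proposition~\ref{case2}, the orbit $\gamma(t)=(\varphi(t),\psi(t))$ crosses the line $\varphi=\varphi_0$ infinitely often, so the graph of $\rho$ oscillates across the cone line $\rho=\varphi_0 r$. This means $M_{f,\rho}$ lies alternately inside and outside the cone $C_{f,\th}$ on a sequence of annular regions accumulating at infinity; equivalently, $M_{f,\rho}$ and $C_{f,\th}$ are \emph{not} ordered by inclusion near infinity. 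Next I would argue by contradiction: suppose $C_{f,\th}$ were area-minimizing. Then, after truncating to a large ball $B_R$, one could compare the area of $C_{f,\th}\cap B_R$ against a competitor built from a piece of $M_{f,\rho}$, exploiting the fact that the graph crosses the cone. The spiraling furnishes radii $r$ at which $\rho(r)>\varphi_0 r$ and others at which $\rho(r)<\varphi_0 r$, so one can splice together portions of the smooth graph and of the cone to produce a surface with the same boundary on $\partial B_R$ but strictly smaller area, violating minimality.

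A cleaner route, which I expect to be the one actually pursued, is via the monotonicity of density directly. By the monotonicity formula (see \cite{fe,c-m}), if $C_{f,\th}$ were minimizing, then any minimal surface asymptotic to it would have density ratio bounded below by $\Theta_\infty$ at all scales, or more precisely the density ratio of $M_{f,\rho}$ would be monotone and would have to increase from $1$ at the origin to $\Theta_\infty$ at infinity in a controlled, order-respecting way. The spiraling behavior contradicts this: because $\gamma$ oscillates around $(\varphi_0,0)$, the graph $M_{f,\rho}$ crosses $C_{f,\th}$ infinitely often, which is incompatible with the strict barrier/maximum-principle consequences that minimizing cones impose on nearby minimal graphs. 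Concretely, a strictly minimizing (or even merely minimizing) cone cannot be crossed transversally infinitely often by a smooth minimal graph sharing it as tangent cone at infinity, as this would contradict the constancy-of-density characterization of the cone inside the monotonicity formula.

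The main obstacle will be making the comparison argument rigorous at the level of currents: one must verify that the truncated cone and the spliced competitor represent the same boundary class in $B_R$ and that the area defect coming from a single crossing of the spiral is genuinely negative and bounded away from zero, rather than being absorbed by error terms. The hard part will be controlling the interface where one glues the graph piece to the cone piece, ensuring the gluing does not introduce extra area that cancels the gain; here the precise quantitative oscillation estimates $\varphi_{2m-1}\downarrow\varphi_0$ and $\varphi_{2m}\uparrow\varphi_0$ from Proposition~\ref{case2} are essential, since they guarantee that the graph genuinely overshoots and undershoots the cone by definite amounts on each lap. Once the strict area comparison (or the strict violation of monotonicity) is secured on a single annulus, the conclusion that $C_{f,\th}$ is non-minimizing follows immediately.
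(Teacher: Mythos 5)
You have assembled the right ingredients---the crossing radii furnished by Proposition \ref{case2} and the monotonicity formula---but neither of your two routes actually closes, and the step both are missing is the same elementary observation. The decisive point is not that the graph crosses the cone per se, but that at a crossing radius $d_i=e^{t_i}$ (where $\varphi(t_i)=\varphi_0$) the rescaled minimal graph $M_i$, i.e.\ the graph of $F_{f,\rho_{d_i}}$ from \eqref{Fd_i} over the unit disk, has \emph{exactly} the same boundary as the truncated cone $C_{f,\th}\cap \Bbb{D}^{n+m+2}\big(\sqrt{1+\tan^2\th}\big)$: over the unit sphere both restrict to $\{(x,\varphi_0 f(x)):x\in S^n\}$. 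Hence the smooth minimal graph itself, truncated at a crossing radius, is already an admissible competitor; no splicing of graph pieces with cone pieces, and therefore no gluing-error or quantitative-overshoot analysis, is needed. Your first route stalls precisely on the interface control you flag as ``the hard part,'' and that difficulty is entirely self-inflicted.

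The strict area inequality then comes from monotonicity applied to the graph $M$ of $F_{f,\rho}$, not from any hypothesized property of the cone: the density ratios $\Th_i$ of $M$ at the radii $\sqrt{d_i^2+\rho(d_i)^2}$ increase to the density $\Th_\infty$ of the tangent cone at infinity (Theorem \ref{graph}), and equality $\Th_1=\Th_\infty$ would force $M$ to be a cone, which it is not; thus $\text{Vol}(M_1)$ is strictly smaller than the volume of the truncated cone with the same boundary, and $C_{f,\th}$ is non-minimizing. By contrast, your ``cleaner route'' rests on claims that are incorrect or unjustified: a minimal surface asymptotic to a minimizing cone does \emph{not} have density ratio bounded below by $\Th_\infty$ at all scales (the graph is smooth, so its density at the origin is $1$ regardless of whether the cone minimizes), and the assertion that a minimizing cone ``cannot be crossed transversally infinitely often'' by a minimal graph having it as tangent cone at infinity is not a consequence of the monotonicity formula and is nowhere established in your argument---no contradiction is actually derived from the supposed minimality. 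As written the proposal therefore does not prove the corollary, although it is repaired by the one-line boundary-matching observation above, which is exactly how the paper argues.
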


\begin{proof}
Let $M$ be the graph of $F_{f,\rho}$.
Then the density function $\Th: \R^+\ra \R$ of $M$ centered at the origin given by
\begin{equation}\label{density1}
\Th(R)=\f{\text{Vol}\big(M\cap \Bbb{D}^{n+m+2}(R)\big)}{\om_{n+1}R^{n+1}},
\end{equation}
where $\om_{n+1}$ denotes the volume of unit ball in $\R^{n+1}$.

Denote by $M_i$ the graph of $F_{f,\rho_{d_i}}$ in \eqref{Fd_i}
and $\Th_i:=
\Th\Big(\sqrt{d_i^2+\rho(d_i)^2}\Big)$.
Then

\begin{equation}
\Th_i=
\f{d_i^{n+1}\text{Vol}(M_i)}{\om_{n+1}\big(\sqrt{d_i^2+\rho(d_i)^2}\big)^{n+1}}
=\f{\text{Vol}\left(M_i\right)}{\om_{n+1}\big(\sqrt{1+
\tan^2\th
}\big)^{n+1}}.
\end{equation}
By the monotonicity theorem for minimal submnaifolds (see e.g. \cite{c-m,fe}),
these quantities increasingly approach the density
$\Th_\infty$
of $C_{f,\rho}$ $-$ the tangent cone of $M$ at infinity,
i.e.,
\begin{equation}
\Th_1\leq \cdots\leq \Th_k\cdots \ra \ \Th_\infty=\f{\text{Vol}\Big(C_{f,\th}\cap \Bbb{D}^{n+m+2}\big(\sqrt{1+\tan^2\th}\big)\Big)}{\om_{n+1}\big(\sqrt{1+\tan^2\th}\big)^{n+1}}.
\end{equation}
If
$\Th_1=\cdots=\Th_\infty$, then
$M$ is a cone.
Since it is not the case,
we have $\Th_1<\Th_0$
and
$$
\text{Vol}\big(M_1\big)
<
\text{Vol}\Big(C_{f,\th}\cap \Bbb{D}^{n+m+2}\big(\sqrt{1+\tan^2\th}\big)\Big).
$$
As
$$
\p\big(M_1\big)
=
\p \Big(C_{f,\th}\cap \Bbb{D}^{n+m+2}\big(\sqrt{1+\tan^2\th}\big)\Big),
$$
we conclude that
$C_{f,\th}$ is not area-minimizing.
\end{proof}

\bigskip\bigskip

\Section{Appendix}{Appendix}
\subsection{Proof of Lemma \ref{lem2}}\label{App1}
Assume $\phi:(\bar{N},\bar g)\rightarrow (N,g)$ and $x\in \text{Im}(\phi)\subset N$.
By the constant rank theorem (see e.g. \S II.7 of \cite{bo})
and the compactness of $\bar{N}$,
the fiber $\phi^{-1}(x)$ over $x$ is a compact
submanifold of $\bar{N}$ with finitely many connected components.
Given $\bar{x}\in \phi^{-1}(x)$, denote by
$[\bar{x}]$ the connected component
of $\phi^{-1}(x)$ containing $\bar{x}$
and set
\begin{equation}
P:=\{[\bar{x}]:\bar{x}\in \bar{N}\}.
\end{equation}
Define
\begin{equation}
\pi(\bar{x})=[\bar{x}]\ \text{\ \ and\ \ \ } i([\bar{x}])=x.
\end{equation}
Then each fiber of $\pi$ is connected and $\phi=i\circ \pi$.

Let $\bar{d}$ and $d$ be the intrinsic distance functions on $(\bar{N},\bar{g})$ and $(N,g)$, respectively,
and $d_H$ the \textit{Hausdorff distance function} on $P$, i.e.
\begin{equation}\label{dh1}
d_H([\bar{x}_0],[\bar{y}_0])=\max\{\sup_{\bar{x}\in [\bar{x}_0]}\inf_{\bar{y}\in [\bar{y}_0]}\bar{d}(\bar{x},\bar{y}),
\sup_{\bar{y}\in [\bar{y}_0]}\inf_{\bar{x}\in [\bar{x}_0]}\bar{d}(\bar{y},\bar{x})\}<+\infty.
\end{equation}
Then $(P,d_H)$
 becomes
 a metric space
with
induced
topology.

Given $[\bar{x}_0],[\bar{y}_0]\in P$
where
representatives
$\bar{x}_0$ and $\bar{y}_0$ are chosen so that
$$\bar{d}(\bar{x}_0,\bar{y}_0)=\bar{d}([\bar{x}_0],[\bar{y}_0]):=\inf\{\bar{d}(\bar{x},\bar{y}):\bar{x}\in [\bar{x}_0],\bar{y}\in [\bar{y}_0]\},$$
let $\bar{\xi}:[0,1]\ra \bar{N}$ be a shortest
geodesic from $\bar{x}_0$ to $\bar{y}_0$ and $\xi:=\phi\circ \bar{\xi}$.
Due to the assumption on singular values, $(\phi_*)_{\bar{x}}: \left((\ker(\phi_*)_{\bar{x}})^\bot,\bar{g}\right)\subset (T_{\bar{x}} \bar{N},\bar{g})\ra (T_{x}N,g)$
is an isometric embedding for each $\bar{x}\in \bar{N}$. Then for each $\bar{x}\in [\bar{x}_0]$, there exists
a unique
 smooth curve $\bar{\xi}_{\bar{x}}:[0,1]\ra \bar{N}$
 such that $\phi\circ \bar{\xi}_{\bar{x}}=\xi$, $\bar{\xi}_{\bar{x}}(0)=\bar{x}$ and $\bar{\xi}_{\bar{x}}'(t)$
 perpendicular to 
  $[\bar{\xi}_{\bar{x}}(t)]$.
  Denote
 \begin{equation}
 \Phi(\bar{x})=\bar{\xi}_{\bar{x}}(1).
 \end{equation}
 Since $\bar{\xi}_{\bar{x}}$ smoothly dependents on $\bar{x}$ and $\text{Length}(\bar{\xi}_{\bar{x}})=\text{Length}
 (\xi)=\text{Length}(\bar{\xi})$, we have
\begin{enumerate}
\item[(A)] $\Phi$ is a diffeomorphism between $[\bar{x}_0]$ and $[\bar{y}_0]$;
\item[(B)] $\bar{d}(\bar{x},\Phi(\bar{x}))=\bar{d}([\bar{x}_0],[\bar{y}_0])=d_H([\bar{x}_0],[\bar{y}_0])$ for each $\bar{x}\in [\bar{x}_0]$.
\end{enumerate}

By the constant rank theorem, the compactness of $\bar{N}$ guarantees the existence of a positive constant $\de$ so that:
\begin{itemize}
\item[{}] For each $\bar{x}\in \bar{N}$ with $x:=\phi(\bar{x})$, $\phi(\td{B}_\de(\bar{x}))$ is a $p$-dimensional embedded submanifold of $N$, and $\td{B}_\de(\bar{x})\cap \phi^{-1}(x)\subset [\bar{x}]$, where $\td{B}_r(\bar{x})$ is the geodesic ball centered at $\bar{x}$ and of radius $r$.
\end{itemize}

Denote by $B_r([\bar{x}])\subset P$ the ball of radius $r$ centered at $[\bar{x}]$.
Based on (A)-(B), we can derive the followings:
\begin{enumerate}
\item [(C)] $i|_{B_{\de/2}([\bar{x}])}$ is injective;
\item [(D)] $i(B_{\de/2}([\bar{x}]))=\phi(\td{B}_{\de/2}(\bar{x}))$ is a $p$-dimensional embedded submanifold of $N$.
\end{enumerate}
 Therefore, we can
 endow $P$ with
a differential structure so that both $i$ and $\pi$ are smooth. 
Moreover,
under
$h:=i^*g$,
$\pi:(\bar{N},\bar{g})\rightarrow(P,h)$ is a Riemannian submersion
and $i:(P,h)\rightarrow(N,g)$ an isometric immersion.
It is worth noting that $d_H$
 is just the intrinsic distance function on $(P,h)$. This completes the proof of Lemma \ref{lem2}.

 \bigskip

\subsection{Proof of Corollary \ref{cor2}}\label{App2}

Suppose $f:S^n\ra S^m$ is an LOM with singular values $\la_1,\cdots,\la_n$ at $x\in S^n$.
Let
$\{\ep_1,\cdots,\ep_n\}$ and $\{e_1,\cdots,e_n\}$ be corresponding S-bases of $(T_x S^n,g_n)$ and $(T_x S^n,g)$, respectively.
Set
\begin{equation}\label{basis4}
E_j=(I_{f,\th})_* e_j=\f{(\cos\th \ep_j,\sin\th f_* \ep_j)}{\sqrt{\cos^2\th+\sin^2\th \la_j^2}},\qquad \forall 1\leq j\leq n.
\end{equation}
Then $\{E_1,\cdots,E_n\}$ form an orthonormal basis of $T_{I_{f,\th}(x)} M_{f,\th}$, and
$$*(\nu_1\w \cdots\w \nu_{m+1})=\mathbf{X}\w E_1\w \cdots\w E_n$$
where $*$ is the {\it Hodge star} operator and $\{\nu_1,\cdots,\nu_{m+1}\}$ is an oriented orthonormal basis of the normal plane $N_{I_{f,\th}(x)} M_{f,\th}$.

Let $\{\ep_{n+2},\cdots,\ep_{n+m+2}\}$ be an oriented orthonormal basis of $Q_0:=\{x_1=\cdots=x_{n+1}=0\}$
and
$\a$ the angle between $Q_0$ and $N_{I_{f,\th}(x)} M_{f,\th}$.
Then
\begin{equation}
\aligned
\cos\a&=\lan \nu_1\w \cdots\w \nu_{m+1},\ep_{n+2}\w\cdots\w \ep_{n+m+2}\ran\\
&=\lan *(\nu_1\w \cdots\w \nu_{m+1}), *(\ep_{n+2}\w\cdots\w \ep_{n+m+2})\ran\\
&=\lan \mathbf{X}\w E_1\w \cdots \w E_n,\mathbf{Y}_1\w \ep_1\w \cdots\w \ep_n\ran\\
&=\left|\begin{array}{cccc}
\lan \mathbf{X},\mathbf Y_1\ran & \lan \mathbf{X},\ep_1\ran & \cdots & \lan\mathbf{X},\ep_n\ran\\
\lan E_1,\mathbf Y_1\ran & \lan E_1,\ep_1\ran & \cdots & \lan E_1,\ep_n\ran\\
& \cdots & &\\
\lan E_n,\mathbf Y_1\ran & \lan E_n,\ep_1\ran & \cdots & \lan E_n,\ep_n\ran
\end{array}\right|\\
&=\cos\th \prod_{j=1}^n \f{\cos\th}{\sqrt{\cos^2\th +\sin^2\th \la_j^2}}
\endaligned
\end{equation}
By applying Theorem \ref{npk}, we obtain \eqref{angle}.

Note that on $(S^n,g)$
the volume form
\begin{equation}\label{dV}
dV=\sqrt{\det\big(g(\ep_j,\ep_k)\big)}\ \ep^*_1\w\cdots\w \ep^*_n=\prod_{j=1}^n \sqrt{\cos^2\th+\sin^2\th \la_j^2}\ \ep^*_1\w \cdots\w \ep^*_n.
\end{equation}
By integration over $S^n$, the fomula \eqref{volume} follows.

It is easy to see that, at $y=tI_{f,\theta}(x)$ for $t>0$,
$X, E_1,\cdots, E_n$ are precisely the angle directions (see \cite{wo} for definition) of $T_yC_{f,\theta}$ relative to $Q_0^\perp$,
with Jordan angles
\begin{equation}
\th_0=\th\quad \text{and}\quad \th_i=\arccos\left(\f{\cos\th}{\cos^2\th+\sin^2\th\la_i^2}\right)\quad \forall 1\leq i\leq n.
\end{equation}
As in \cite{x-y}\cite{j-x-y3}, the slope function of $C_{f,\th}$ is thereby
\begin{equation}
W=\prod_{j=0}^n \sec\th_j=\sec\th \prod_{j=1}^n \f{\sqrt{\cos^2\th +\sin^2\th \la_j^2}}{\cos\th}.
\end{equation}

 \bigskip

\subsection{Proof of Proposition \ref{case1}}\label{App3}

Let $D$ be the bounded closed domain on the $\varphi\psi$-plane enclosed by the line segment from $(0,0)$ to $(\varphi_0,0)$
and the graph of function $h:[0,\varphi_0]\ra \R$ given by
\begin{equation}
h(\varphi)=\f{\big(\f{(\la^2-1)p}{1+\la^2\varphi^2}-(n-p)\big)\varphi}{c(n-p)},
\end{equation}
where $c\in (0,1]$ is a constant to be chosen.
$$\includegraphics[scale=0.8]{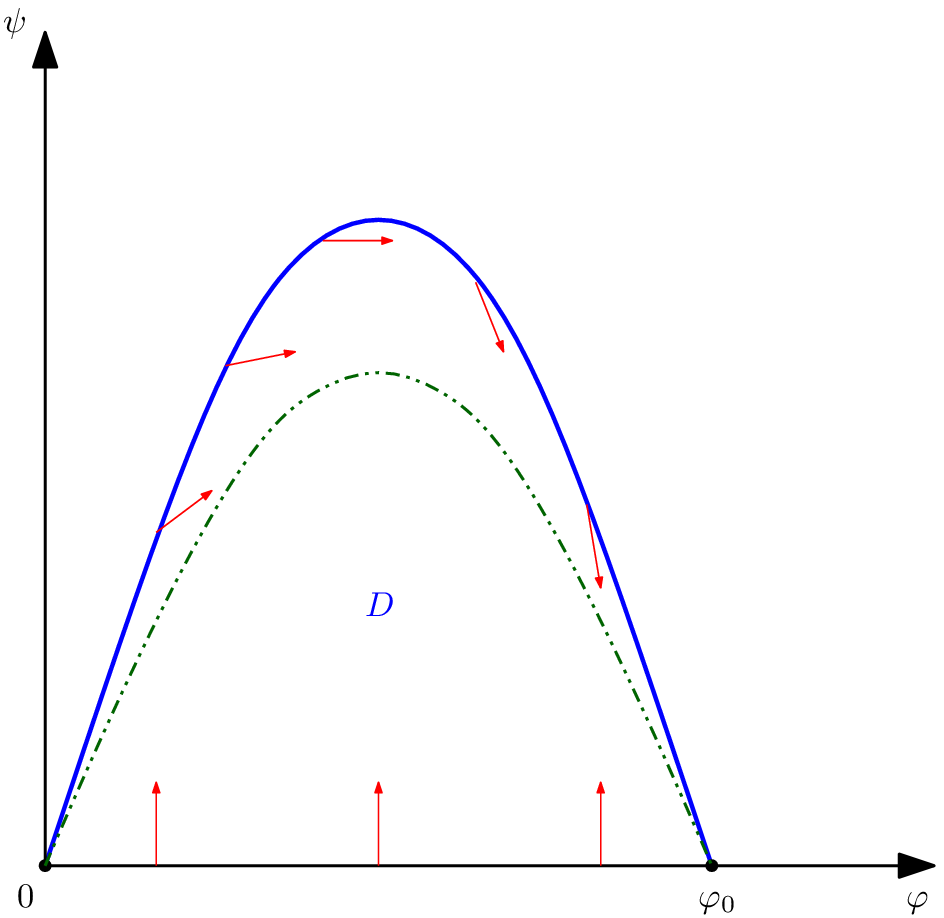}$$

 We shall prove that
$D$ is invariant under the forward development of (\ref{ODE2}) by verifying that $X=(X_1,X_2)$ points inward in $\p D$
except at the zero points $(0,0)$ and $(\varphi_0,0)$. In other words, we need to show:
\begin{enumerate}
\item[(A)] $X_2(\varphi,0)>0$ for $\varphi\in (0,\varphi_0)$;
\item[(B)] $h'(\varphi)> \f{X_2}{X_1}(\varphi,h(\varphi))$ for $\varphi\in (0,\varphi_0)$.
\end{enumerate}

Here (A) is obvious and (B) requires following careful calculations.


Set
\begin{equation}\label{f12}
\aligned
f_1(\varphi)&:=\f{(\la^2-1)p}{1+\la^2\varphi^2}-(n-p),\\
f_2(\varphi)&:=n-p+\f{p}{1+\la^2\varphi^2},
\endaligned
\end{equation}
and
\begin{equation}\label{h}
h(\varphi)=\f{ f_1(\varphi)\varphi}{c(n-p)}.
\end{equation}
Then
\begin{equation}\label{dh}
h'(\varphi)=\f{f_1(\varphi)+ f'_1(\varphi)\varphi}{c(n-p)}.
\end{equation}
Due to (\ref{X12}), (\ref{f12}), (\ref{h}) and (\ref{dh}), (B) is equivalent to
\begin{equation}\label{neq}\aligned
0&< h'(\varphi)+1+ \Big[n-p+\f{p}{1+\la^2\varphi^2}+\big(n-p+\f{(1-\la^2)p}{1+\la^2\varphi^2}\big)\f{\varphi}{h(\varphi)}\Big]\big[1+(\varphi+h(\varphi))^2\big]\\
&=\left(1+\f{f_1(\varphi)}{c(n-p)}\right)+\f{ f'_1(\varphi)\varphi}{c(n-p)}+(f_2(\varphi)-c(n-p))\left[1+\varphi^2\big(1+\f{f_1(\varphi)}{c(n-p)}\big)^2\right]\\
:&=I+II+III\cdot IV.
\endaligned
\end{equation}

By (\ref{varphi0}),
\begin{equation}
\la^2\varphi_0^2=\f{\la^2 p-n}{n-p},\qquad 1+\la^2\varphi_0^2=\f{(\la^2-1)p}{n-p}.
\end{equation}
Set
\begin{equation}
s:=\f{1+\la^2\varphi_0^2}{1+\la^2\varphi^2}-1.
\end{equation}
Then $\varphi\in (0,\varphi_0)$ implies $s\in (0,\la^2\varphi_0^2)=(0,\f{\la^2 p-n}{n-p})$, and
\begin{equation}\aligned
\f{1}{1+\la^2\varphi^2}&=\f{1+s}{1+\la^2\varphi_0^2}=\f{n-p}{(\la^2-1)p}(1+s),\\
\la^2\varphi^2&=\f{1+\la^2\varphi_0^2}{1+s}-1=\f{\la^2\varphi_0^2-s}{1+s}=\f{\f{\la^2 p-n}{n-p}-s}{1+s}.
\endaligned
\end{equation}
It immediately follows that
\begin{equation}
f_1(\varphi)=(n-p)s,
\end{equation}
\begin{equation}
f_2(\varphi)=\f{(n-p)(\la^2+s)}{\la^2-1}
\end{equation}
and
\begin{equation}\aligned
 f'_1(\varphi)\varphi&=-\f{2(\la^2-1)p\la^2\varphi^2}{(1+\la^2\varphi^2)^2}\\
&=-2(\la^2-1)p\big(1-\f{1}{1+\la^2\varphi^2}\big)\f{1}{1+\la^2\varphi^2}\\
&=-\f{2(n-p)}{(\la^2-1)p}(\la^2 p-n-(n-p)s)(1+s).
\endaligned
\end{equation}
Therefore
\begin{equation}
I=1+\f{s}{c}:=I(s),
\end{equation}
\begin{equation}
II=-\f{2(n-p)}{c(\la^2-1)p}\left(\f{\la^2 p-n}{n-p}-s\right)(1+s):=II(s),
\end{equation}
\begin{equation}
III=\f{n-p}{\la^2-1}(\la^2-c(\la^2-1)+s):=III(s)
\end{equation}
and
\begin{equation}\aligned
IV&=1+\varphi^2(1+\f{f_1(\varphi)}{c(n-p)})^2\\
&=1+\la^{-2}(\la^2\varphi^2)(1+\f{f_1(\varphi)}{c(n-p)})(1+\f{f_1(\varphi)}{c(n-p)})\\
&=1+\la^{-2}(\f{\la^2 p-n}{n-p}-s)\f{1+\f{s}{c}}{1+s}(1+\f{s}{c})\\
&\geq 1+\la^{-2}(\f{\la^2 p-n}{n-p}-s)(1+\f{s}{c})\\
:&=IV(s).
\endaligned
\end{equation}
Let
\begin{equation}
F(s):=I(s)+II(s)+III(s)\cdot IV(s).
\end{equation}
By $c\in(0,1]$ and $s>0$, $III(s)>0$.
So $I+II+III\cdot IV\geq F(s)$ for $s\in (0,\la^2\varphi_0^2)$, i.e., $\varphi\in (0,\varphi_0)$.
Observe that $F(s)$
is a cubic polynomial in $s$ and the coefficient of the third order term is $-\f{n-p}{c\la^2(\la^2-1)}<0$. Hence $F(s)=F(0)+sG(s)$, where $G(s)$ is a quadratic polynomial whose graph is a parabola opening downward.
 This implies $G(s)\geq \min\{G(0),G(\la^2\varphi_0^2)\}$ for $s\in (0,\la^2\varphi_0^2)$. Therefore, for (\ref{neq}), it suffices to show
\begin{itemize}
\item $F(0)\geq 0$;
\item $G(0)> 0$;
\item $G(\la^2\varphi_0^2)> 0$.
\end{itemize}

A straightforward calculation shows
\begin{equation}
\aligned
F(0)&=I(0)+II(0)+III(0)\cdot IV(0)\\
&=1-\f{2(\la^2 p-n)}{c(\la^2-1)p}+(n-p)(\f{\la^2}{\la^2-1}-c)(1+\f{\la^2p-n}{\la^2(n-p)})\\
&=1+n-\f{2(\la^2p-n)}{c(\la^2-1)p}-\f{c(\la^2-1)n}{\la^2},
\endaligned
\end{equation}
\begin{equation}\label{G0}
\aligned
G(0)=&F'(0)=I'(0)+II'(0)+III'(0)\cdot IV(0)+III(0)\cdot IV'(0)\\
=&\f{1}{c}-\f{2(n-p)}{c(\la^2-1)p}(\f{\la^2 p-n}{n-p}-1)+\f{n-p}{\la^2-1}(1+\f{\la^2 p-n}{\la^2 (n-p)})\\
&+(n-p)(\f{\la^2}{\la^2-1}-c)\la^{-2}(\f{\la^2 p-n}{c(n-p)}-1)\\
=&(\f{1}{c}-1)p-\f{1}{c}+\f{(\f{4-p}{c}-p)(n-p)}{\la^2p -p}+\f{(c+2)n-cp}{\la^2},\\
\endaligned
\end{equation}
\begin{equation}
\aligned
F(\la^2\varphi_0^2)&=I(\la^2\varphi_0^2)+II(\la^2\varphi_0^2)+III(\la^2\varphi_0^2)\cdot IV(\la^2\varphi_0^2)\\
&=1+\f{\la^2 p-n}{c(n-p)}+\f{n-p}{\la^2-1}(\la^2-c(\la^2-1)+\f{\la^2p-n}{n-p})\\
&=1+n+\f{\la^2 p-n}{c(n-p)}-c(n-p)
\endaligned
\end{equation}
and
\begin{equation}
\aligned
G(\la^2\varphi_0^2)&=\f{F(\la^2\varphi_0^2)-F(0)}{\la^2\varphi_0^2}\\
&=\f{1}{c}+\f{c(n-p)}{\la^2}+\f{2(n-p)}{c(\la^2-1)p}>0.
\endaligned
\end{equation}

Recalling $\la^2=\frac{k(k+n-1)}{p}$, we choose $c$ according to the values of $(n,p,k)$:

\textbf{Case 1.} $(n,p,k)=(3,2,2)$.

Using $c=1$, we have
$$F(0)=4-\f{5}{3c}-\f{9c}{4}=\f{1}{12}>0$$
and
$$G(0)=\f{4}{3c}-\f{5}{6}+\f{c}{4}=\f{3}{4}>0.$$

\textbf{Case 2.} $(n,p,k)=(5,4,2)$.

With $c=1$,
$$F(0)=6-\f{7}{4c}-\f{10c}{3}=\f{11}{12}>0$$
and
$$G(0)=\f{3}{c}-\f{7}{6}+\f{c}{3}=\f{13}{6}>0.$$

\textbf{Case 3.} $(n,p,k)=(5,4,4)$.

For $c=\f{6}{7}$,
$$F(0)=6-\f{27}{14c}-\f{35c}{8}=0$$
and
$$G(0)=\f{3}{c}-\f{81}{28}+\f{c}{8}=\f{5}{7}>0.$$

\textbf{Case 4.} $n\geq 7$.

In this case, Theorem \ref{npk2} asserts $p<n<2p$ and $p\geq 4$.

Take $c=\f{1}{2}$. By $n>p$,
$$\aligned
F(0)&=1+n-\f{2(\la^2p-n)}{c(\la^2-1)p}-\f{c(\la^2-1)n}{\la^2}\\
&\geq 1+n-\f{2}{c}-cn=\f{n}{2}-3>0.
\endaligned$$
From $2p>n$ we have
\begin{equation*}\aligned
&(\la^2 p-p)-3(n-p)=k(k+n-1)-p-3(n-p)\\
\geq & 2(n+1)-p-3(n-p)=2p-n+2>0,
\endaligned
\end{equation*}
i.e., $\f{n-p}{\la^2 p-p}<\f{1}{3}$. Hence
\begin{equation*}\aligned
G(0)&=p-2+\f{(8-3p)(n-p)}{\la^2 p-p}+\f{5n-p}{2\la^2}\\
&>p-2+\f{8-3p}{3}>0
\endaligned
\end{equation*}

Therefore we establish (B) that $D$ is invariant under the forward development of (\ref{ODE2}).
Since $(0,0)$ is a saddle critical point, there exists a smooth solution $t\in (-\infty,T_\infty)\mapsto (\varphi(t),\psi(t))\in \R^2$ to (\ref{ODE2}), with
$\lim\limits_{t\ra -\infty}(\varphi(t),\psi(t))=(0,0)$. Here
$T_\infty\in \R\cup \{+\infty\}$ such that $(-\infty,T_\infty)$ is the maximal existence interval of this solution. Moreover, by Theorem 3.5 in \S VIII of \cite{h},
as $t\ra -\infty$,
$\varphi(t)=O(e^{\mu_1 t})$, $\psi(t)=O(e^{\mu_1 t})$ and the direction of $(\varphi(t),\psi(t))^T$
converges to that of $V_1$, i.e., an eigenvector of $A$ associated to $\mu_1$ (see (\ref{A}), (\ref{la12}) and (\ref{V12})).
 It is easy to check that $h'(0)>\mu_1$.
 Thus the orbit of this solution remains in $D$ and $T_\infty=+\infty$.
 By (A), we know $\varphi'(t)=\psi(t)>0$.
 Hence the $\om$-limit set of the orbit must be a critical point, not a limit cycle, as $t$ tends to positive infinity.
Now we complete the proof.
\bigskip

\subsection{Proof of Proposition \ref{case2}}\label{App4}

The proof relies on the following lemma.

\begin{lem}\label{lem}
For $(n,p)=(3,2)$, $k\geq 4$ or $(n,p)=(5,4)$, $k\geq 6$,
let $t\in [b_0,b_2]\mapsto (\varphi(t),\psi(t))$ be a smooth solution to (\ref{ODE2}) and  $b_1\in (b_0,b_2)$ so that
\begin{itemize}
\item $\varphi(b_0)\geq \sqrt{\f{3p-n-1}{3(n-p)}}$;
\item $\psi(b_0)=\psi(b_1)=\psi(b_2)=0$;
\item $\psi(t)>0$ for $t\in (b_0,b_1)$, and $\psi(t)<0$ for $t\in (b_1,b_2)$.
\end{itemize}
Then $\varphi(b_1)>\varphi_0$ and $\varphi(b_0)<\varphi(b_2)<\varphi_0$.
Namely, there are no limit cycles of (\ref{ODE2}) on the region $\varphi\geq\sqrt{\f{3p-n-1}{3(n-p)}}$.
\end{lem}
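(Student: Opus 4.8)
The plan is to reduce Lemma \ref{lem} to three statements about where the orbit meets the $\varphi$-axis: the two sign facts $\varphi(b_1)>\varphi_0$ and $\varphi(b_2)<\varphi_0$, which are elementary, and the inward drift $\varphi(b_0)<\varphi(b_2)$, which is the real content and which I would extract from a Lyapunov function built for \eqref{ODE2}. Throughout I write the second component of the field \eqref{X12} as $X_2=-\psi-[\Theta_1(\varphi)\psi+\Theta_2(\varphi)\varphi]\,[1+(\varphi+\psi)^2]$, where $\Theta_1(\varphi):=n-p+\frac{p}{1+\lambda^2\varphi^2}>0$ and $\Theta_2(\varphi):=n-p+\frac{(1-\lambda^2)p}{1+\lambda^2\varphi^2}$. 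Since $\lambda^2=\frac{k(k+n-1)}{p}>1$ by \eqref{sing2}, a direct check shows $\Theta_2(\varphi)<0$ precisely for $0<\varphi<\varphi_0$ and $\Theta_2(\varphi_0)=0$, with $\varphi_0$ given by \eqref{varphi0}.

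First I would pin down the two turning values. At $t=b_1$ one has $\psi(b_1)=0$ with $\psi>0$ just before and $\psi<0$ just after, so $\psi_t(b_1)\le 0$; evaluating $X_2$ at $\psi=0$ gives $\psi_t(b_1)=-\Theta_2(\varphi(b_1))\,\varphi(b_1)\,(1+\varphi(b_1)^2)$, and since $\varphi(b_1)>\varphi(b_0)>0$ this forces $\Theta_2(\varphi(b_1))\ge 0$, i.e. $\varphi(b_1)\ge\varphi_0$. Equality would place the orbit at the rest point $(\varphi_0,0)$, which by uniqueness contradicts $\psi\not\equiv0$, so $\varphi(b_1)>\varphi_0$. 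The same computation at $t=b_2$, where $\psi<0$ just before and $\psi(b_2)=0$ so $\psi_t(b_2)\ge 0$, yields $\Theta_2(\varphi(b_2))\le 0$ and hence $\varphi(b_2)<\varphi_0$; I would run this step only after the drift estimate, so that $\varphi(b_2)>0$ is already in hand.

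The heart of the proof is the quantity $V(\varphi,\psi):=\tfrac12\psi^2+G(\varphi)$ with $G'(\varphi):=\Theta_2(\varphi)\,\varphi\,(1+\varphi^2)$. Differentiating along \eqref{ODE2}, using $\dot\varphi=\psi$ and the cancellation $[1+(\varphi+\psi)^2]-(1+\varphi^2)=\psi(2\varphi+\psi)$, collapses the awkward factor $1+(\varphi+\psi)^2$ and gives
\begin{equation*}
\dot V=-\psi^2\,\Xi(\varphi,\psi),\qquad \Xi:=1+\Theta_1\big(1+(\varphi+\psi)^2\big)+\Theta_2\,\varphi\,(2\varphi+\psi).
\end{equation*}
I claim $\Xi\ge 0$ on the strip $\{\varphi\ge\varphi_*\}$, where $\varphi_*^2=\frac{3p-n-1}{3(n-p)}$. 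Granting this, the drift estimate follows by contradiction: suppose $\varphi(b_2)\le\varphi(b_0)$; as $\varphi$ rises on $(b_0,b_1)$ to $\varphi(b_1)>\varphi_0$ and then falls, there is $\tilde b\in(b_1,b_2]$ with $\varphi(\tilde b)=\varphi(b_0)$, and on $[b_0,\tilde b]$ one has $\varphi\ge\varphi(b_0)\ge\varphi_*$, so $\dot V\le 0$ there, strictly on the portion where $\psi\ne0$. Then $V(\tilde b)<V(b_0)$, while $V(\tilde b)=\tfrac12\psi(\tilde b)^2+G(\varphi(b_0))\ge G(\varphi(b_0))=V(b_0)$, a contradiction. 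Hence $\varphi(b_0)<\varphi(b_2)$, which together with $\varphi(b_2)<\varphi_0$ is the assertion. The absence of limit cycles in $\{\varphi\ge\varphi_*\}$ then comes for free: such a cycle, lying in this convex half-plane, would enclose its only equilibrium $(\varphi_0,0)$ and cross the axis at a left point $(\varphi_L,0)$ with $\varphi_L\ge\varphi_*$; applying the drift estimate with $b_0$ this crossing gives a strictly larger return value after one revolution, incompatible with periodicity.

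The main obstacle is the claimed sign $\Xi\ge 0$ on $\{\varphi\ge\varphi_*\}$. Viewing $\Xi$ as a quadratic in $\psi$ with positive leading coefficient $\Theta_1$, nonnegativity is equivalent to the discriminant inequality $(2\Theta_1+\Theta_2)^2\varphi^2\le 4\Theta_1\big[1+\Theta_1(1+\varphi^2)+2\Theta_2\varphi^2\big]$. Substituting the explicit $\Theta_1,\Theta_2$ and $\lambda^2=\frac{k(k+n-1)}{p}$ and clearing the common denominator $1+\lambda^2\varphi^2$ turns this into a polynomial inequality in $\varphi^2$ (and $k$) whose decisive factor vanishes at $\varphi^2=\varphi_*^2$; the threshold $\varphi_*$ is thus the $k$-independent value below which the sign cannot be guaranteed. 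Verifying that the inequality genuinely holds for $\varphi^2\ge\varphi_*^2$ across the admissible pairs $(n,p)=(3,2)$ with $k\ge 4$ and $(n,p)=(5,4)$ with $k\ge 6$ is where that restriction is consumed, since the lower-order coefficients must be controlled uniformly in $k$. This is a bounded but delicate computation, and arranging that the clean threshold $\varphi_*$ emerges from it is the crux of the argument.
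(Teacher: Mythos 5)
Your reduction to the three axis-crossing statements is the right skeleton, and your derivation of $\dot V=-\psi^2\,\Xi$ is algebraically correct, but the load-bearing claim — that $\Xi\geq 0$ on the strip $\{\varphi\geq\varphi_*\}$, $\varphi_*^2=\frac{3p-n-1}{3(n-p)}$ — is false, so the proof has a genuine gap. Take $(n,p,k)=(5,4,8)$, which the lemma must cover: then $\la^2=24$, $\varphi_*^2=2$, and at $\varphi=\sqrt{2}$ one has $1+\la^2\varphi^2=49$, $\Theta_1=\frac{53}{49}$, $\Theta_2=-\frac{43}{49}$. Viewing $\Xi=\Theta_1\psi^2+(2\Theta_1+\Theta_2)\varphi\,\psi+\big[1+\Theta_1(1+\varphi^2)+2\Theta_2\varphi^2\big]$ as your quadratic in $\psi$, its minimum (attained at $\psi=-\frac{63\sqrt{2}}{106}\approx-0.84$) equals
\[
\frac{36}{49}-\frac{(63/49)^2\cdot 2}{4\cdot(53/49)}=-\frac{153}{5194}<0,
\]
so your discriminant inequality fails exactly at the claimed threshold (and on a neighborhood above it; the failure worsens as $k$ grows: in the limit $k\to\infty$ for $(n,p)=(5,4)$ the discriminant condition holds only for $\varphi^2\geq 4/\sqrt{3}\approx 2.31>2$). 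Hence $\dot V\leq 0$ does not hold pointwise on the region your contradiction argument traverses, and nothing in the proposal constrains the orbit's $\psi$-values on the lower arc away from the bad set, so neither the drift estimate $\varphi(b_0)<\varphi(b_2)$ nor the no-limit-cycle conclusion follows as written. The root cause is that requiring nonnegativity of $\Xi$ for \emph{all} $\psi$ is strictly stronger than what the dynamics demands: the dangerous $\psi$-values are negative, i.e., on the descending arc.

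The paper's proof is engineered precisely to dodge this. Instead of a single Lyapunov function, it reflects the lower arc via $(\td\varphi,\td\psi)=(\varphi,-\psi)$, writes both arcs as graphs $\psi(\varphi)$, $\td\psi(\varphi)$ over the interval between the crossings, and compares them through the cross-product condition $(\star)$, which (using $X_1=-Y_1=\psi>0$) reduces to $X_2+Y_2<0$ evaluated at a \emph{common} point with $\psi>0$. In that sum the quadratic expansions of $(\varphi+\psi)^2$ and $(\varphi-\psi)^2$ cancel the odd terms, leaving $X_2+Y_2=-2\psi-2f_2(\varphi)\psi(1+\varphi^2+\psi^2)+4f_1(\varphi)\varphi^2\psi$, in which the $\psi^2$-dependence enters only through $-2f_2\psi\cdot\psi^2\leq 0$ and can be discarded; what remains is a polynomial inequality in $\varphi^2$ alone, $-3\la^2(n-p)\varphi^4+\big(\la^2(3p-n-1)-3n\big)\varphi^2-n-1<0$, which factors so that $\varphi_*$ emerges uniformly in $k$. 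So the clean threshold you hoped to extract from the discriminant computation is genuinely a feature of the arc-comparison formulation, not of a pointwise energy-dissipation bound; to repair your argument you would need either an a priori bound keeping the lower arc out of the set $\{\Xi<0\}$, or to abandon the pointwise inequality in favor of a comparison of the two arcs as in the paper.
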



\begin{proof}
Using symbols in Appendix \ref{App3},
we have from (\ref{ODE2}) that
\begin{equation}\label{psit}
\psi_t=-\psi-(f_2(\varphi)\psi-f_1(\varphi)\varphi)\big[1+(\varphi+\psi)^2\big].
\end{equation}
By assumptions, $\varphi(b_1)\neq \varphi_0$ and
$0\geq \psi'(b_1)=f_1(\varphi(b_1))\varphi(b_1)\big(1+\varphi(b_1)^2\big)$.
So $\varphi(b_1)>\varphi_0$ and $\psi'(b_1)<0$.
Similarly $\varphi(b_0),\varphi(b_2)<\varphi_0$.

For $t\in(b_1,b_2)$, with
\begin{equation}\label{td}
\left\{\begin{array}{ll}
\td{\varphi}=\varphi\\
\td{\psi}=-\psi
\end{array}\right.
\end{equation}
(\ref{ODE2}) becomes
\begin{equation}\label{forY}
\left\{\begin{array}{ll}
\td{\varphi}_t=-\td{\psi}\\
\td{\psi}_t=-\td{\psi}-\big(f_2(\td{\varphi})\td{\psi}+f_1(\td{\varphi})\td{\varphi}\big)\big[1+(\td{\varphi}-\td{\psi})^2\big]
\end{array}\right.
\end{equation}
By the monotonicity, $\psi$ for $t\in(b_0,b_1)$ and $\td{\psi}$ for $t\in(b_1,b_2)$
can be written as smooth functions $\psi(\varphi)$ and $\td{\psi}(\varphi)$ respectively.
Then
we have
\begin{equation}
\dfrac{d\psi}{d\varphi}=-1-\left[f_2(\varphi)-f_1(\varphi)\dfrac{\varphi}{\psi}\right]\left[1+\left(\varphi+\psi\right)^2\right],
\end{equation}
and
\begin{equation}
\dfrac{d\td{\psi}}{d\varphi}=\,\,1+\left[f_2(\varphi)+f_1(\varphi)\dfrac{\varphi}{\td{\psi}}\right]\left[1+\left(\varphi-\td{\psi}\right)^2\right].
\end{equation}
Therefore
\begin{equation}
\dfrac{d\psi}{d\varphi}-\dfrac{d\td{\psi}}{d\varphi}<
f_1(\varphi)
\left\{
\dfrac{\varphi}{\psi}\left[1+\left(\varphi+\psi\right)^2\right]-
\dfrac{\varphi}{\td{\psi}}\left[1+\left(\varphi-\td{\psi}\right)^2\right]
\right\}.
\end{equation}
Note that $\psi-\td{\psi}$ is continuous on $[\varphi_0,\varphi(b_1)]$ with value zero at $\varphi(b_1)$.
Through a contradiction argument, we have $\psi>\td{\psi}$ on $[\varphi_0,\varphi(b_1))$.
$$\includegraphics[scale=0.8]{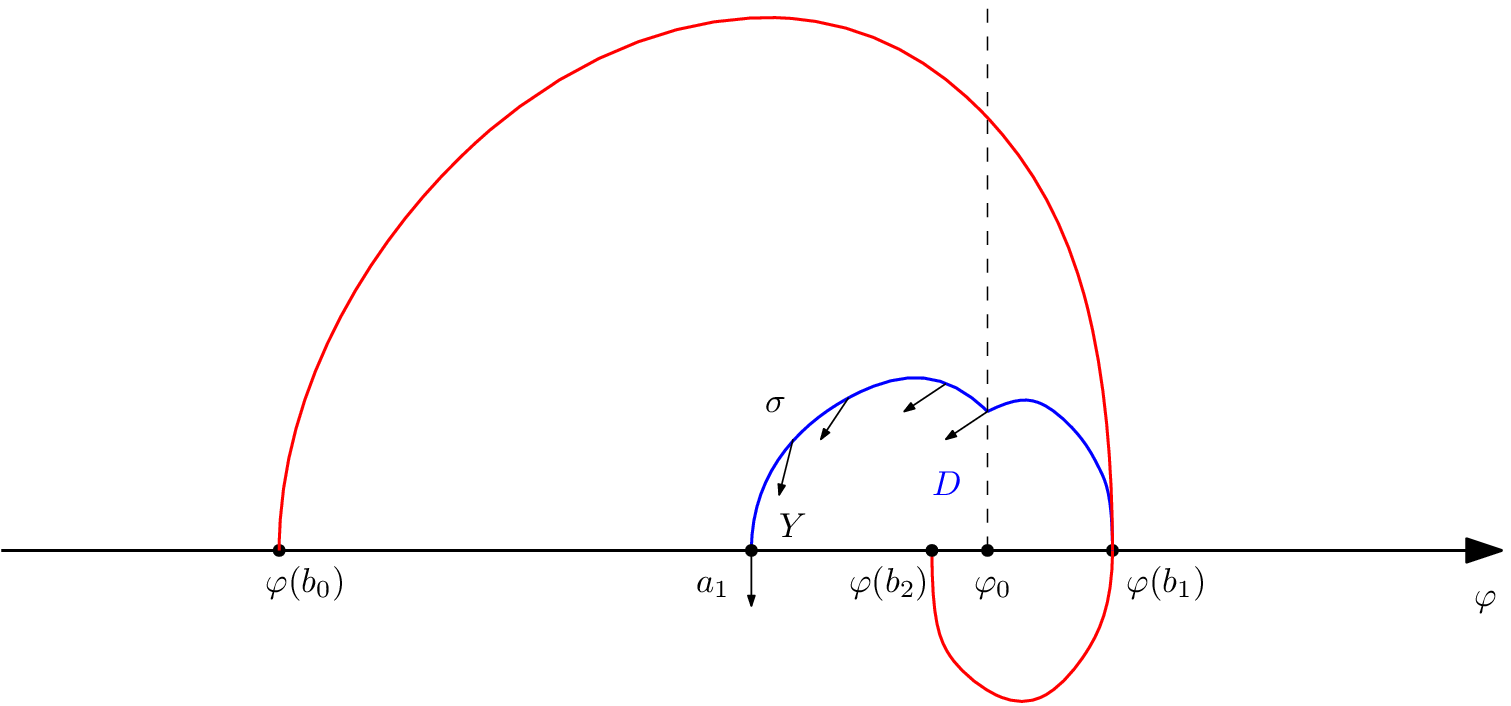}$$

Let $\sigma$ be the orbit of the backward solution to \eqref{ODE2} from $(\varphi_0, \td{\psi}(\varphi_0))$ to $(a_1,0)$ for some $a_1>\varphi(b_0)$.
Based on \eqref{forY}, set
\begin{equation}\label{Y12}
\aligned
Y_1(\varphi,\psi)&=-\psi,\\
Y_2(\varphi,\psi)&=-\psi-(f_2(\varphi)\psi+f_1(\varphi)\varphi)\big[1+(\varphi-\psi)^2\big].
\endaligned
\end{equation}
If we have
\begin{itemize}
\item [$(\star):$]
$\left|\begin{array}{cc} X_1 & X_2 \\ Y_1 & Y_2\end{array}\right|<0$ at $(\varphi,\psi)$ when $\varphi\geq \sqrt{\f{3p-n-1}{3(n-p)}}$
and $\psi>0$,
\end{itemize}
then the inequality holds
on $\sigma$.
Hence the region $D$ embraced by $\sigma$, the $\varphi$-axis and the striaight line $\varphi=\varphi_0$
forms an invariant set under the forward development of \eqref{forY}.
Therefore $\varphi(b_0)<a_1<\varphi(b_2)<\varphi_0$.

Since $X_1=-Y_1=\psi>0$,
$(\star)$ is equivalent to show
$Y_2+X_2<0$.
So we do the following calculations. 
$$\aligned
Y_2+X_2=&-\psi-(f_2(\varphi)\psi+f_1(\varphi)\varphi)\big[(1+(\varphi-\psi)^2\big]\\
&-\psi-(f_2(\varphi)\psi-f_1(\varphi)\varphi)\big[1+(\varphi+\psi)^2\big]\\
=&-2\psi-2f_2(\varphi)\psi(1+\varphi^2+\psi^2)+4f_1(\varphi)\varphi^2\psi\\
\leq & 2\psi(-1-f_2(\varphi)(1+\varphi^2)+2f_1(\varphi)\varphi^2)\\
=&\f{2\psi}{1+\la^2\varphi^2}\Big[-1-\la^2\varphi^2-(1+\varphi^2)\big((n-p)(1+\la^2\varphi^2)+p\big)\\
&\qquad\quad+2\varphi^2\big((\la^2-1)p-(n-p)(1+\la^2\varphi^2)\big)\Big]\\
=&\f{2\psi}{1+\la^2\varphi^2}\left[-3\la^2(n-p)\varphi^4+\big(\la^2(3p-n-1)-3n\big)\varphi^2-n-1\right]\\
<&-\f{6\la^2(n-p)\psi}{1+\la^2\varphi^2}\varphi^2(\varphi^2-\f{3p-n-1}{3(n-p)})\leq 0.
\endaligned$$
Now the proof of the lemma gets complete.
\end{proof}



As in Appendix \ref{App3}, there exists a smooth solution $t\in(-\infty,T_\infty)\mapsto (\varphi(t),\psi(t))$ to (\ref{ODE2}), with $\lim\limits_{t\ra -\infty}(\varphi(t),\psi(t))=(0,0)$, $\varphi(t)=O(e^{\mu_1 t})$,
$\psi(t)=O(e^{\mu_1 t})$ and the direction of $(\varphi(t),\psi(t))^T$ convergent to that of $V_1$ as $t\ra -\infty$.
We shall accomplish the proof of Proposition
\ref{case2} in several steps.

\textbf{Step 1. }
{Show the existence of $t_1\in (-\infty,T_\infty)\subset \R$, such that $\psi(t)>0$ for all $t\in (-\infty,t_1]$ with}
\begin{equation}\label{step1}
\varphi(t_1)=\varphi_0\quad \text{and }\quad \psi(t_1)\leq \f{1}{5}\varphi_0.
\end{equation}

Define $g:[0,\varphi_0]\ra \R$ by
\begin{equation}
g(\varphi)=(2f_1(\varphi)+\f{1}{5})\varphi.
\end{equation}
Let $D$ be the domain enclosed by
the graph of $g$, the $\varphi$-axis and the line $\varphi=\varphi_0$.

$$\includegraphics[scale=0.8]{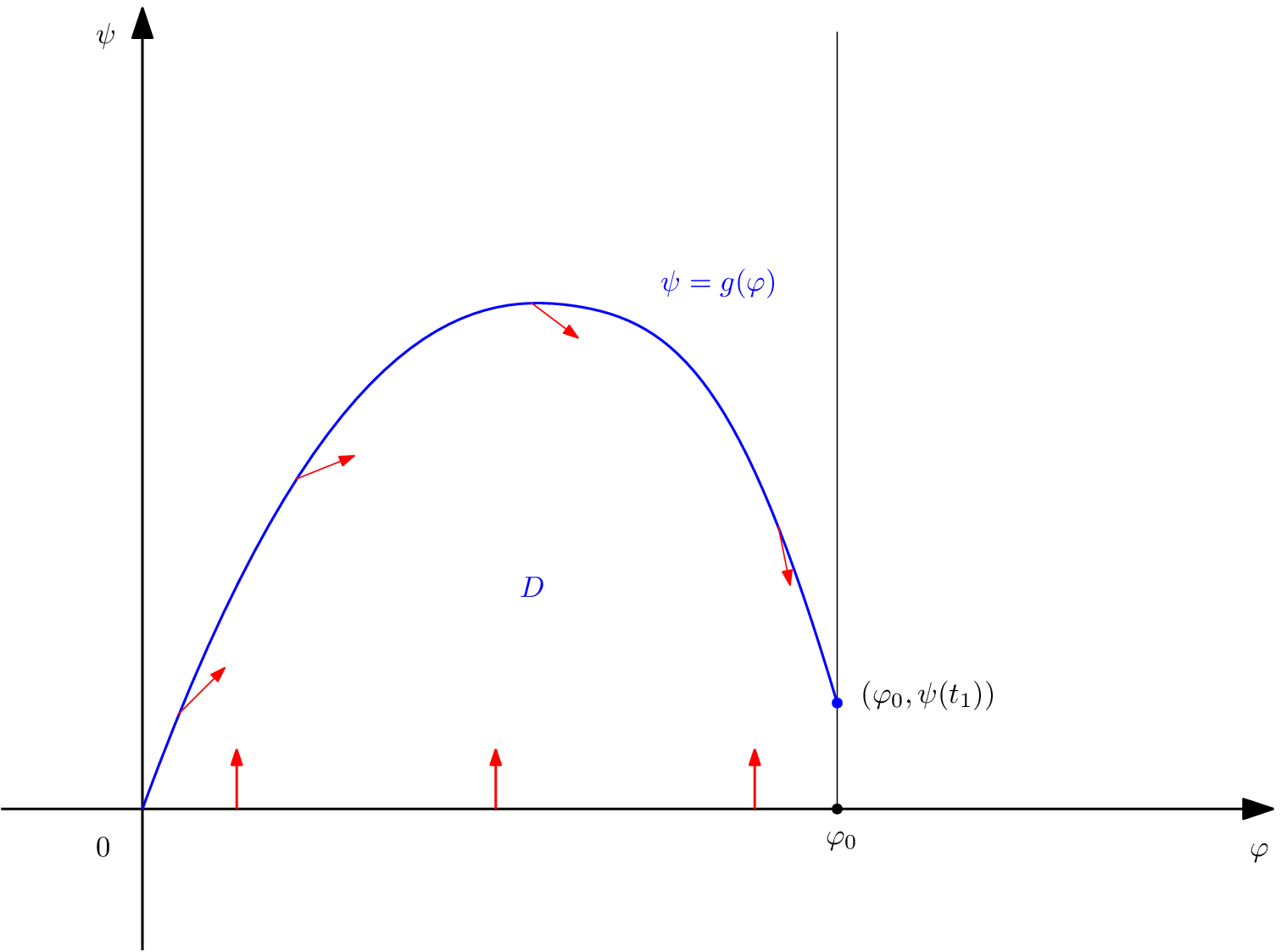}$$
We claim that  the vector field $X$ points inward on $\partial D-\{\varphi=\varphi_0\}$.
Namely,
\begin{enumerate}
\item[(A)] $X_2(\varphi,0)>0$ for each $\varphi\in (0,\varphi_0)$;
\item[(B)] $g'(\varphi)> \f{X_2}{X_1}(\varphi,g(\varphi))$ for any $\varphi\in (0,\varphi_0)$.
\end{enumerate}
Here (A) is trivial and (B) is equivalent to
\begin{equation}
\aligned
0<& g'(\varphi)+1+\left(f_2(\varphi)-\f{\varphi f_1(\varphi)}{g(\varphi)}\right)\big[1+(\varphi+g(\varphi))^2\big]\\
=&\left(\f{6}{5}+2f_1(\varphi)\right)-\Big(-2 f'_1(\varphi)\varphi\Big)+\left(f_2(\varphi)-\f{f_1(\varphi)}{2f_1(\varphi)+\f{1}{5}}\right)\left[1+\varphi^2(\f{6}{5}+2f_1(\varphi))^2\right]\\
:=&I-II+III\cdot IV.
\endaligned
\end{equation}
As in \S \ref{App3}, we use
$$s:=\f{1+\la^2\varphi_0^2}{1+\la^2\varphi^2}-1\quad \text{and} \quad s\in (0,\la^2\varphi_0^2).$$
Similarly, we have (now $n-p=1$ in our cases)
\begin{equation}
\aligned
I&=\f{6}{5}+2s,\\
II&=\f{4}{(\la^2-1)p}(\la^2p-n-s)(1+s),\\
III&=\f{\la^2+s}{\la^2-1}-\f{s}{2s+\f{1}{5}},\\
IV&=1+\f{\la^2p-n-s}{\la^2(1+s)}(\f{6}{5}+2s)^2=1+\f{(\la^2-1)p}{4\la^2}\left(\f{\f{6}{5}+2s}{1+s}\right)^2\cdot II.
\endaligned
\end{equation}
Therefore
\begin{equation}\label{es1}\aligned
&I-II+III\cdot IV\\
=&I+III+\left[\f{(\la^2-1)p}{4\la^2}\left(\f{\f{6}{5}+2s}{1+s}\right)^2\cdot III-1\right]\cdot II\\
\geq &I+III+\left[\f{p}{4}\left(\f{\f{6}{5}+2s}{1+s}\right)^2\left(1-\f{s}{2s+\f{1}{5}}\right)-1\right]\cdot II
\endaligned
\end{equation}
Set
\begin{equation}
F(s):=\left(\f{\f{6}{5}+2s}{1+s}\right)^2\left(1-\f{s}{2s+\f{1}{5}}\right)=\f{4}{25}\left(\f{3+5s}{1+s}\right)^2\f{1+5s}{1+10s}.
\end{equation}
Then
$$\log F=\log \f{4}{25}+2\log(3+5s)-2\log(1+s)+\log(1+5s)-\log(1+10s)$$
and
$$\aligned
\f{d\log F}{ds}&=\f{10}{3+5s}-\f{2}{1+s}+\f{5}{1+5s}-\f{10}{1+10s}\\
&=\f{-11+20s+175s^2}{(3+5s)(1+s)(1+5s)(1+10s)}.
\endaligned$$
 Hence $F'(s)=0(>0,<0)$ if and only if $s=\f{1}{5}(>\f{1}{5},<\f{1}{5})$, and
\begin{equation}\label{F}
\min_{s\in (0,\infty)}F=F(\f{1}{5})=\f{32}{27}.
\end{equation}

For $(n,p)=(5,4)$, substituting \eqref{F} into \eqref{es1} leads to
\begin{equation}
I-II+III\cdot IV\geq I+III+\f{5}{27}II> 0.
\end{equation}

For $(n,p)=(3,2)$, it then produces
\begin{equation}
\aligned
&I-II+III\cdot IV\geq I+III-\f{11}{27}II\\
=&\f{6}{5}+2s+\f{\la^2+s}{\la^2-1}-\f{s}{2s+\f{1}{5}}-\f{22}{27(\la^2-1)}(2\la^2-3-s)(1+s)\\
\geq& \f{6}{5}+2s+1-\f{s}{2s+\f{1}{5}}-\f{44}{27}(1+s)\\
=&\f{19}{270}+\f{10}{27}s+\f{1}{20s+2}> 0.
\endaligned
\end{equation}

Hence (B) holds for both cases.

Since $g'(0)>\mu_1$, the solution develops in $D$ until it hits
the border line $\varphi=\varphi_0$ at $t_1\in \R$
or it approaches $(\varphi_0,0)$ as $t\rightarrow +\infty$.
Due to the fact that $(\varphi_0,0)$ is a spiral point,
the latter cannot occur and moreover $t_1<+\infty$, $\varphi(t_1)=\varphi_0$ and $\psi(t_1)>0$.

\textbf{Step 2.} 
Before $\psi(t)$ reaches zero, we have $\varphi_t=\psi>0$, $\varphi>\varphi_0$ (after $t_1$)
and $f_1(\varphi)> 0$.
Consequently,
\begin{equation}
(\varphi+\psi)_t=-(f_2(\varphi)\psi-f_1(\varphi)\varphi)\big[1+(\varphi+\psi)^2\big]\leq 0.
\end{equation}
Hence the solution intersects the $\varphi$-axis for the first time when $t$ equals some $T_1\in \R$,
with $\varphi_0<\varphi_1:=\varphi(T_1)\leq \varphi_0+\psi(t_1)\leq \frac{6}{5}\varphi_0$.

\textbf{Step 3.}
At $t=T_1$, $\psi_t<0$.
So the solution dips into the lower half plane and similarly cannot limits to $(\varphi_0,0)$.
By the argument in the proof of Lemma \ref{lem}, the solution extends forward to touch the $\varphi$-axis again (after $T_1$) when $t$ equals some $T_2\in \R$.
Mark $t_2\in(T_1,T_2)$ for $\varphi(t_2)=\varphi_0$.
 When $\psi<0$ and $\varphi\leq\varphi_0$, we have $(\varphi+\psi)_t\geq 0$.
 Therefore, $\varphi_0>\varphi_2:=\varphi(T_2)>\varphi_0+\psi(t_2)\geq \varphi_0-\psi(t_1)\geq \frac{4}{5}\varphi_0$.

\textbf{Step 4.}
By induction,
we obtain
$\{T_i:i\in \Bbb{Z}^+\}$
and $\varphi_i:=\varphi(T_i)$
with properties:
\begin{itemize}
\item $\psi(T_i)=0$ for each $i\in \Bbb{Z}^+$;
\item $\{\varphi_{2m-1}:m\in \Bbb{Z}^+\}$ is a strictly decreasing sequence in $(\varphi_0,\frac{6}{5}\varphi_0]$,\\ $\{\varphi_{2m}:m\in \Bbb{Z}^+\}$
is a strictly increasing sequence in $[\f{4}{5}\varphi_0,\varphi_0)$;
\item $\psi(t)>0$ in $(-\infty,T_1)\cup\left(\bigcup\limits_{m\in \Bbb{Z}^+}(T_{2m},T_{2m+1})\right)$;
\item $\psi(t)<0$ in $\bigcup\limits_{m\in \Bbb{Z}^+}(T_{2m-1},T_{2m})$.
\end{itemize}

\textbf{Step 5.}
Assume $a:=\lim\limits_{m\ra \infty}\varphi(T_{2m})<\varphi_0$.
Then there would be a limit cycle for \eqref{ODE2} through $(a,0)$.
But $a>\f{4}{5}\varphi_0>\sqrt{\f{3p-n-1}{3(n-p)}}$.
It leads to a contradiction to the nonexistence of limit cycles in Lemma \ref{lem}.
The same for $\lim\limits_{m\ra \infty}\varphi(T_{2m-1})$.
Therefore, $\lim\limits_{m\ra \infty}\varphi(T_{2m})=\lim\limits_{m\ra \infty}\varphi(T_{2m-1})=\varphi_0$.

Since the solution cannot attain $(\varphi_0,0)$ in a finite time,
it is now clear that $T_i\rightarrow +\infty$.
This
completes the proof of Proposition \ref{case2}.


\section*{Acknowledgements} Research supported in part by the
NSFC (Grant Nos. 11471299, 11471078, 11622103, 11526048, 11601071, 11871445),
the Fundamental Research Funds for the Central Universities, the SRF for ROCS, SEM,
and a Start-up Research Fund from Tongji University. It is a great pleasure to thank
the referees for helpful comments, and MSRI,  Chern Institute at Nankai University,
ICTP and IHES for hospitalities.

\bibliographystyle{amsplain}

\end{document}